\documentclass[pdflatex,sn-mathphys-num]{sn-jnl}
\usepackage{graphicx}%
\usepackage{multirow}%
\usepackage{amsmath,amssymb,amsfonts}%
\usepackage{amsthm}%
\usepackage{mathrsfs}%
\usepackage[title]{appendix}%
\usepackage{xcolor}%
\usepackage{textcomp}%
\usepackage{manyfoot}%
\usepackage{booktabs}%
\usepackage{algorithm}%
\usepackage{algorithmicx}%
\usepackage{algpseudocode}%
\usepackage{listings}%
\usepackage{graphicx}
\usepackage{subfig}
\allowdisplaybreaks

\DeclareMathOperator*{\argmin}{arg\,min}

\theoremstyle{thmstyleone}%
\newtheorem{theorem}{Theorem}
\newtheorem{Proposition}[theorem]{Proposition}%
\newtheorem {Cor}[theorem]{Corollary}
\newtheorem {Assumption}[theorem]{Assumption}
\newtheorem {Lemma}[theorem]{Lemma}
\theoremstyle{thmstyletwo}%
\newtheorem{exam}{Example}%
\newtheorem{remark}{Remark}%

\theoremstyle{thmstylethree}%
\newtheorem{definition}{Definition}%

\makeatletter
\newenvironment{breakablealgorithm}
  {
   \begin{center}
     \refstepcounter{algorithm}
     \hrule height.8pt depth0pt \kern2pt
     \renewcommand{\caption}[2][\relax]{
       {\raggedright\textbf{\fname@algorithm~\thealgorithm} ##2\par}%
       \ifx\relax##1\relax 
         \addcontentsline{loa}{algorithm}{\protect\numberline{\thealgorithm}##2}%
       \else 
         \addcontentsline{loa}{algorithm}{\protect\numberline{\thealgorithm}##1}%
       \fi
       \kern2pt\hrule\kern2pt
     }
  }{
     \kern2pt\hrule\relax
   \end{center}
  }
\makeatother

\begin{document}

\title[Article Title]{Mean Field Games for Renewable Energy Development}

\author[1]{\fnm{Luciano} \sur{Campi}}\email{luciano.campi@unimi.it}

\author*[1]{\fnm{Zhuoshu} \sur{Wu}}\email{zhuoshuwu@hotmail.com}

\affil[1]{\orgdiv{Dipartimento di Matematica ``Federigo Enriques''}, \orgname{Universit\`a degli Studi di Milano}, \orgaddress{\street{Via Saldini 50}, \city{Milan}, \postcode{20123}, \country{Italy}}}

\abstract{We propose a mean field game (MFG) framework to model the evolution of renewable energy production in competitive electricity markets. Producers interact through the spot price while optimising their profits under production, installation, and capacity adjustment costs, as well as the generation uncertainty. We first formulate the market as an $N$-player stochastic differential game and analyse its mean field game limit as $N\to\infty$. We characterise the representative producer's optimal control via forward-backward stochastic differential equations (FBSDEs) derived from the stochastic maximum principle and determine the corresponding equilibrium spot price. We establish existence and uniqueness of solutions to the FBSDEs and prove that the MFG admits a unique equilibrium. We then extend the model to a Stackelberg mean field game to incorporate the role of a social planner. The planner's optimisation problem leads to an extended Hamilton-Jacobi-Bellman (HJB) system, for which we prove existence and uniqueness of viscosity solutions. Finally, we implement a deep learning-based numerical scheme to approximate the equilibrium and investigate the impact of policy interventions on capacity dynamics. Our results highlight how optimal subsidy design depends on prevailing market conditions and can mitigate both capacity shortages and overproduction.}

\keywords{mean field games, Stackelberg games, stochastic control, renewable energy, FBSDEs}



\maketitle

\section{Introduction}\label{sec1}

Electricity generation remains a major contributor to global CO$_2$ emissions, despite a steady decline in emissions intensity driven by the rapid expansion of renewable and nuclear energy sources \footnote{\href{https://www.iea.org/reports/electricity-2025/emissions}{Clean energy mitigates $CO_2$ power emissions in 2025-2027}}. Increasing the share of electricity generated from renewable sources is therefore critical to decarbonising the energy system and achieving global climate targets.

The large-scale deployment of renewable energy technologies, particularly wind and solar photovoltaics, has become a cornerstone of sustainable electricity generation. While this shift brings substantial environmental and economic benefits, it also introduces new challenges. In particular, the intermittency of renewable generation and the possibility of overproduction, where generation systematically exceeds real-time system demand, can lead to price volatility, inefficient investment, and system instability. Addressing these challenges requires models that capture both strategic interactions among producers and the role of policy interventions.

In this paper, we focus on electricity markets and extend the mean field games (MFG) framework proposed in \cite{ABBC2023} to study the strategic interactions among infinitely many energy producers under uncertainty. Mean field games, introduced in \cite{LL2007} and \cite{MRP2006}, provide a tractable framework for analysing games with infinitely many players and have found numerous applications in economics and energy markets, including competition in the oil market \cite{GLL2010}, renewable energy certificate markets \cite{SFJ2022} and electricity market interactions \cite{CR2017}. Incorporating a social planner, who takes into account the energy supply and demand, naturally extends the MFG to a Stackelberg MFG, in which a principal (representing the regulatory authority) interacts with a mean field of energy producers, as illustrated in \cite{ACDL2022} for epidemic control problems.

We begin by formulating the renewable energy market as an $N$-player stochastic game in which each producer controls its generation capacity to maximise expected profit. The dynamics of generation capacity are subject to both idiosyncratic and common sources of uncertainty that capture geographical, weather, and technological shocks. Moreover, we assume no individual producer can influence the electricity spot price in a given direction through its own energy generation.

Instead of solving the $N$-player stochastic game directly, we adopt the MFG approach to approximate optimal behaviours of large populations of interacting agents, making it well suited to renewable energy markets. In the presence of common noise, the empirical distribution evolves stochastically; see \cite{CFS2013}. The probabilistic approach based on the stochastic maximum principle developed in \cite{CD2018} is applied to accommodate the common noise, and the representative producer's optimal control is characterised through a McKean-Vlasov forward-backward stochastic differential equation (FBSDE). We prove existence and uniqueness of solutions and develop the deep BSDE method proposed in \cite{HL2020} to approximate the mean field equilibrium.

To investigate the role of policy interventions, we extend the model to a Stackelberg mean field game involving a social planner and a continuum of producers. The social planner designs installation subsidies (or taxes) to balance supply and demand while considering the cost of intervention, see \cite{SC1999}. This leads to a stochastic control problem for a forward-backward system.

Optimality conditions for the stochastic control problems governed by fully coupled FBSDEs have been studied extensively via the stochastic maximum principle in \cite{P1993}, \cite{BS2010}, \cite{WZ2013} and the references therein. As noted in \cite{BS2010}, these results typically rely on convexity assumptions that are not satisfied in our setting. We therefore adopt a dynamic programming approach and derive an extended Hamilton-Jacobi-Bellman (HJB) system characterising the planner's value function and backward state dynamics. In \cite{WY2008} and \cite{LW2014}, the dynamic programming principle (DPP) is applied to study this kind of problem with a recursive cost functional; more precisely, the cost functional is given in the form of backward stochastic differential equation (BSDE) while the state dynamics are described by the stochastic differential equations. For more general cost functionals, in \cite{XY2022}, an extended HJB system is derived to characterise the value function and BSDE, which is closely related to \cite{BC2010},  \cite{BM2010} and \cite{BMM2017}.

Building on these results, we analyse the social planner's optimisation problem by restricting attention to a class of admissible Markov controls and the existence of optimal controls within this class is shown. Within this framework, as established in \cite{PT1999},  the solution of the associated BSDE admits a unique viscosity solution to a backward quasilinear second order parabolic PDE. By \cite{P2009}, the dynamic programming principle (DPP) applies to the value function, yielding the associated HJB equation. We then prove existence and uniqueness of a viscosity solution to this HJB equation. Together with the PDE associated with the BSDE, this yields an extended HJB system. Finally, we prove the continuous differentiability of the value function and adopt the numerical approach proposed in \cite{DL2024} to solve the extended HJB system. Numerical experiments are conducted to assess the impact of social planner interventions on renewable energy capacity development.

The main contributions of this work are therefore threefold:
\begin{enumerate}
    \item We develop a stochastic MFG framework with common noise for renewable energy production and establish existence and uniqueness of equilibrium via McKean-Vlasov FBSDEs.
    \item We introduce a Stackelberg extension incorporating a social planner and derive the associated extended HJB system, proving existence and uniqueness of viscosity solutions.
    \item We propose a deep learning approach to numerically approximate solutions and conduct simulations to provide quantitative insights into the impact of policy interventions on capacity dynamics.
\end{enumerate}

The remainder of the paper is structured as follows. Section \ref{SDGs} formalises the $N$-player stochastic game and the optimisation problem of individual producers. Section \ref{MFGs} introduces the mean field limit and characterises the MFG solution via FBSDEs. Sections \ref{SMP} and \ref{MFGSNM} present theoretical existence and uniqueness results and numerical implementation of FBSDEs. Section \ref{ASMFG} formulates the  Stackelberg MFG between the social planner and producers, while Section \ref{TEHJBS}  establishes the extended HJB system. Finally, Section  \ref{NAOEHJBS1} presents numerical results and discusses policy implications for renewable energy development.

\section{Model} \label{MFGM}
In this section, we describe a stochastic differential game with $N$ players, then formulate the limit problem as a mean field game with common noise (MFG). For reasons that will become clear later, we do not analyse the $N$-player games directly. The setup in Section \ref{SDGs} only serves as motivation for the formulation of an MFG problem in Section \ref{MFGs}.

\subsection{$N$-player Stochastic Differential Games} \label{SDGs}

Given a finite horizon $T>0$, we consider a complete filtered space $(\Omega, \{\mathcal{F}_t\}_{t\in[0, T]}, \mathbb{P})$. We suppose that for $i\in \mathcal{N}:= \{ 1, \dots, N\}$, $(W_t^{i})_{t\in[0, T]}$ and $(W_t^0)_{t\in[0, T]}$ are independent $\{\mathcal{F}_t\}_{t\in[0, T]}$ Brownian motions and that $\xi_0^i$ is a random variable independent of the Brownian motions. We also denote by $\{\mathcal{F}_t^0\}_{t\in[0, T]}$ the filtration generated by $(W_t^0)_{t\in[0, T]}$. Throughout this paper, we set
\begin{align*}
\mathcal{L}_{\mathbb{F}}^2([0,T];\mathbb{R})
:= \Big\{ \beta:\Omega\times[0,T]\to\mathbb{R} \ \Big| \
&\beta \text{ is } \mathbb{F}\text{-progressively measurable},\\
&\mathbb{E}\Big[\sup_{t\in[0,T]} |\beta_t|^2\Big] < \infty \Big\},\\
\mathcal{H}_{\mathbb{F}}^2([0,T];\mathbb{R})
:= \Big\{ \beta:\Omega\times[0,T]\to\mathbb{R} \ \Big| \
&\beta \text{ is } \mathbb{F}\text{-progressively measurable},\\
&\mathbb{E}\Big[\int_0^T |\beta_t|^2 \, dt\Big] < \infty \Big\}.
\end{align*}
We consider $N$ renewable energy producers in a competitive market and formulate it as an $N$-player stochastic differential game, where agents simultaneously seek to achieve maximum profits while interacting with one another through their generation capacities. Producers receive profit through the generation of electricity by means of renewable energy. One dollar corresponds to one MWh of electricity produced via renewable energy, see \cite[Page 783]{SFJ2022} and \cite{ABBC2023}.

We assume there are a finite number ($N$) of producers and index them by $i$. We denote by $X^i=(X_t^i)_{t\in[0, T]}$ the energy generation capacity, $\alpha^i=(\alpha_t^i)_{t\in[0, T]}$ the capacity adjustment (positive for installation and negative decommissioning) rate of producer $i$ at time $t\in[0, T]$. Each player $i\in\mathcal{N}$ controls its state process $X_t^i$, given by
\begin{align}
dX_t^i = (-\delta X_t^i  + \alpha_t^i) dt + \sigma dW_t^i + \sigma^0 dW_t^0, \,\,\, X_0^i = \xi_0^i,
\end{align}
by selecting an admissible control $\alpha^i$ in $\mathcal{H}_{\mathbb{F}}^2([0, T]; \mathbb{R})$; where $\xi_0^i$ is a non-negative initial state of producer $i$ and $\delta\in(0, 1)$ is the decay rate of the generation capacity, $W=(W_t^0, W_t^i, i=1,\dots.N)$ is a set of $(N+1)$ one-dimensional independent standard Brownian motions defined on $(\Omega, \{\mathcal{F}_t\}_{t\in[0, T]}, \mathbb{P})$. The diffusive terms represent the stochastic nature of renewable resources. The idiosyncratic noise $\sigma dW_t^i$ captures local weather variability (e.g., cloud cover or local wind turbulence) and site-specific technical failures. The common noise $\sigma^0 dW_t^0$ represents regional climatic shocks and shifts in environmental policy that affect all renewable producers simultaneously; additionally, $\sigma, \sigma^0 \neq 0$. We assume the initial states $(\xi_0^i)_{i\in\mathcal{N}}$ are independent and identically distributed, independent of all Brownian motions, and satisfy $\mathbb{E}\left[ (\xi_0^i)^2 \right]<\infty$ for all $i\in\mathcal{N}$.

Given the other producers' strategy, producer $i$ selects a control $\alpha^i\in\mathcal{H}_{\mathbb{F}}^2([0, T]; \mathbb{R})$ in order to maximise the expected profit (equivalently, minimise the expected cost) given as follows
\begin{align}
J^i\left( \alpha^i, \alpha^{-i} \right) = \mathbb{E}\left[ \int_0^{T}  f(t, X_t^i, \bar{X}_t, \alpha_t^i) dt \right], \label{NGAIN}
\end{align}
where $\alpha^{-i}$ denotes the strategy profile of all players except $i$, $\bar{X}_t = \frac{1}{N} \sum_{i=1}^N X_t^i$ is the empirical mean of $(X_t^i)_{i\in\mathcal{N}}$ and $f$ is the running profit function defined by
\begin{align}
f(t, x, \bar{x}, \alpha) &=  x \left( P(t, \bar{x}) - c_p \right) - c_i \alpha - c_a \alpha^2.\label{RPFNP}
\end{align}
It is assumed that the function $f$ is identical to all producers. 

The objective function hence consists of three distinct terms. The first one represents the running profits associated with the generation capacity; more precisely, the producer makes profits $x \left( P(t, \bar{x})-c_p\right)$ per unit time for his generation capacity, where $P(t, \bar{x})$ is the spot price of one unit of electricity and $c_p>0$ is the fixed cost of producing one unit of electricity. The second term corresponds to the fixed cost of installing generation capacity with $c_i>0$, while the third term corresponds to the fixed adjustment cost $c_a>0$ that prevents producers from instantaneously and drastically changing their installation rate at zero cost. 

\begin{Assumption}\label{PILC}
The price function $P:[0, T]\times\mathbb{R}\mapsto \mathbb{R}$ is Lipschitz continuous in $\bar{x}$ with Lipschitz constant $L$ and uniformly in $t$. Moreover, the mapping $\bar{x} \mapsto P(t, \bar{x})$ is decreasing.
\end{Assumption}

\begin{remark}
The decreasing price function with respect to the empirical mean of $(X_t^i)_{i\in\mathcal{N}}$ follows from \cite[Page 698]{ABBC2023}, which argues that renewable energy drives electricity prices down as its market share increases, see also \cite[Page 2]{ARP2020}. More specifically, this feature captures the cannibalization effect. Because renewables have near-zero operating costs, they are always sold first in the market, which drives down the overall electricity price. As a result, when total renewable capacity grows, producers collectively lower the market price, reducing the revenue available for everyone. In addition, the time element of the price function reflects the seasonal effects, see \cite[Page 895]{SS2000}. 
\end{remark}

Note that the strategy of the other producers have an effect on the profit of producer $i$ through the empirical mean and that is the main feature that makes this set up a game. We are seeking an equilibrium concept widely used in game theory settings called the Nash equilibrium, whose definition is given as follows:
\begin{definition}\label{NE}
A set of strategies $(\hat{\alpha}^i)_{i\in\mathcal{N}}$ is said to be a Nash equilibrium if for every player $i\in\mathcal{N}$ and ${\alpha}^i\in\mathcal{H}_{\mathbb{F}}^2([0, T]; \mathbb{R})$,
\begin{align}
J^i(\hat{\alpha}^i, \hat{\alpha}^{-i}) \geq J^i(\alpha, \hat{\alpha}^{-i}).  
\end{align}
\end{definition}

The notion of Nash equilibrium is best understood in terms of the so-called best response function $\Phi: \mathcal{H}_{\mathbb{F}}^2([0, T]; \mathbb{R})^N \mapsto \mathcal{H}_{\mathbb{F}}^2([0, T]; \mathbb{R})^N$ defined by:
\begin{align}
\Phi(\alpha^1, \dots, \alpha^N) = \left( \beta^1, \dots, \beta^N \right)\,\,\,\, \text{if} \,\,\, \forall i\in\mathcal{N},\,\,\, \beta^i = \operatorname*{arg\,sup}_{\alpha^i} J^i (\alpha^i, \alpha^{-i}),
\end{align}
which is well defined under the assumption that there exists a unique maximum of the function in the right of this expression. By the definition of the best response function, a Nash equilibrium therefore appears to be a fixed point of the best response function $\Phi$.

\subsection{Mean Field Game with Common Noise} \label{MFGs}
The stochastic game specified previously is intractable in general; hence, we formulate a MFG problem in the presence of common noise by formally taking a limit as $N\to\infty$ and assuming that all producers are minor agents such that each producer is insignificant relative to the rest of the market and that each player has identical profit function.

Let us denote the limiting version of generation capacity, capacity installation rate and empirical mean by $X_t$, $\alpha_t$ and $\mu_t$ respectively. Let $W=(W_t)_{t\in[0, T]}$ and $W^0=(W_t^0)_{t\in[0, T]}$ be one-dimensional independent Brownian motions defined on a complete filtered probability space $(\Omega, \{\mathcal{F}_t\}_{t\in[0, T]}, \mathbb{P})$ satisfying the usual conditions. We assume that $\mathbb{F}:=\{\mathcal{F}_t, t\in[0, T]\}$ is the natural filtration generated by $\xi_0$, $(W_s)_{s\in[0, t]}$ and $(W_s^0)_{s\in[0, t]}$ and that ${\mathbb{F}}^0:=\{\mathcal{F}_t^0, t\in[0, T]\}$ is generated by Brownian motion $W^0$. Both $\mathbb{F}$ and  ${\mathbb{F}}^0$ are augmented by $P$-null sets, hence they are right-continuous as well.

Now we are ready to state the mean field game problem: 

(i) Given a stochastic process $\mu\in\mathcal{L}_\mathbb{F}^2([0, T]; \mathbb{R})$, solve the following stochastic control problem for a representative producer:
\begin{align}
\sup_{\alpha\in \mathcal{H}_{\mathbb{F}}^2([0, T]; \mathbb{R})} J(\alpha; \mu), \label{MFGP}
\end{align}
where $J(\alpha; \mu)=  \mathbb{E} \left[ \int_0^T  f(t, X_t, \mu_t, \alpha_t)dt \right]$; subject to the dynamical constraint
\begin{align}
dX_t = \left( -\delta X_t  + \alpha_t \right)  dt + {\sigma} dW_t + \sigma^0 dW_t^0, \,\,\, X_0=\xi_0, \label{SDESU}
\end{align}
with $\xi_0$ being identically distributed as $\xi_0^i$ for $i=1, \dots, N$.

(ii) Determine a stochastic process $\mu^X$ s.t $\mu_t^X = \mathbb{E}\left[ \hat{X}_t \big{|} \mathcal{F}_t^0 \right]$ for all $t\in[0, T]$, where $\hat{X}$ is the optimal trajectory associated with the optimal control obtained from (i).

We shall formally define the map whose fixed points are obtained by means of forward-backward system of the conditional McKean-Vlasov type after the discussion of the Stochastic Maximum Principle. 

\subsubsection{Stochastic Maximum Principle} \label{SMP}

Stochastic Maximum Principle (SMP) is an approach to control problems that studies optimality conditions fulfilled by an optimal control, see \cite[Page 149]{P2009} for details. It gives sufficient and necessary conditions for the existence of an optimal control in terms of solvability of a Backward Stochastic Differential Equation as an adjoint process. In this section, we apply SMP to problem \eqref{MFGP} to obtain the unique optimal control.
We begin with the definition of the Hamiltonian of our model.

\begin{definition} \label{DOH}
The reduced Hamiltonian is defined as follows:
\begin{align}
H (t, x, \mu, y, \alpha) = (-\delta x+\alpha)y +  x \left( P(t, \mu) - c_p\right) - c_i\alpha - c_a \alpha^2,
\end{align}
for $t\in[0, T]$ and $x, y, \alpha, \mu\in\mathbb{R}$.
\end{definition}

\begin{Cor}\label{CCOH}
The mapping $(x, \alpha)\mapsto H (t, x, \mu, y, \alpha)$ is strictly concave. 
\end{Cor}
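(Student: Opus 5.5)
The plan is to compute the Hessian of $H$ in the variables $(x,\alpha)$ directly from Definition \ref{DOH}, with $t$, $\mu$, $y$ held fixed. For fixed $(t,\mu,y)$ the map splits as
\begin{align*}
H(t,x,\mu,y,\alpha) = x\bigl(-\delta y + P(t,\mu) - c_p\bigr) + \alpha\,(y - c_i) - c_a\alpha^2 ,
\end{align*}
an affine function of $x$ plus a quadratic function of $\alpha$, with no cross term. Hence $\partial_{xx}H = 0$, $\partial_{x\alpha}H = 0$ and $\partial_{\alpha\alpha}H = -2c_a < 0$, since $c_a>0$. The Hessian $\mathrm{diag}(0,-2c_a)$ is negative semidefinite, so $(x,\alpha)\mapsto H$ is concave.

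Next I would turn this into the quantitative form that the stochastic maximum principle uses later (in the style of \cite{CD2018}). For all $x,x',\alpha,\alpha'$, the exact second-order expansion gives
\begin{align*}
H(t,x',\mu,y,\alpha') - H(t,x,\mu,y,\alpha) - \partial_x H\,(x'-x) - \partial_\alpha H\,(\alpha'-\alpha) = -c_a|\alpha'-\alpha|^2 .
\end{align*}
Here the derivatives are evaluated at $(t,x,\mu,y,\alpha)$, and there is no remainder because $H$ is a polynomial of degree two in $(x,\alpha)$. This identity shows that $H$ is concave jointly in $(x,\alpha)$ and strictly (indeed $2c_a$-strongly) concave in $\alpha$, uniformly in $(t,x,\mu,y)$.

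The main obstacle is the word ``strictly'' in the statement. Because $H$ is affine in $x$, it is not strictly concave jointly in $(x,\alpha)$: along any segment with $\alpha'=\alpha$ and $x'\neq x$ the left-hand side above vanishes. The proof should therefore establish, and I would state explicitly, the precise version: \emph{joint concavity in $(x,\alpha)$ together with strict, in fact strong, concavity in $\alpha$}.

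This is exactly what is needed downstream. Strict concavity in $\alpha$ gives a unique maximiser $\hat\alpha = (y-c_i)/(2c_a)$. Joint concavity, combined with the terminal condition (here zero, since there is no terminal reward), gives the sufficiency part of the stochastic maximum principle, with the $-c_a|\alpha'-\alpha|^2$ term yielding uniqueness of the optimal control.
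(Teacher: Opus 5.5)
You use the same Hessian computation as the paper, whose entire proof is the claim that the Hessian of $H$ in $(x,\alpha)$ is negative definite. Your version of that computation is the correct one, and it shows the paper's claim is false. The Hessian is $\mathrm{diag}(0,-2c_a)$, which is only negative semidefinite. The failure is genuine, not just a degenerate-Hessian artefact: $H$ is affine in $x$ with no $x\alpha$ cross term, so it is affine along every segment with $\alpha$ fixed. Hence strict joint concavity in $(x,\alpha)$, as the corollary literally states, does not hold.

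There is no gap in your argument; the corollary cannot be proved as posed. Your reformulation is the right statement: joint concavity plus $2c_a$-strong concavity in $\alpha$, encoded in your exact identity with remainder $-c_a|\alpha'-\alpha|^2$.

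This weaker property is also all that Proposition~\ref{GANSXYZZ} needs, which confirms your last paragraph. Freeze $\mu$ and apply It\^{o}'s formula to $\hat Y_t(\hat X_t - X_t)$. There is no bracket term because the diffusion is uncontrolled, and the boundary terms vanish since $\hat Y_T=0$ and $\hat X_0=X_0$. Then insert your identity at $(\hat X_t,\hat\alpha_t)$, where $\partial_\alpha H=0$. This gives $J(\hat\alpha;\mu)-J(\alpha;\mu)=c_a\,\mathbb{E}\bigl[\int_0^T|\alpha_t-\hat\alpha_t|^2\,dt\bigr]$. That single identity yields both optimality and uniqueness of $\hat\alpha_t=(\hat Y_t-c_i)/(2c_a)$, with no strict joint concavity required.
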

\begin{proof}
The conclusion follows from the negative definite Hessian of function $H$.
\end{proof}

Next, suppose $\mu=(\mu_t)_{t\in[0, T]}\in \mathcal{L}_{\mathbb{F}^0}^2([0, T]; \mathbb{R})$ is given, then we have a standard optimal stochastic control problem and via an appeal to the SMP, we arrive at the following result:
\begin{Proposition} \label{GANSXYZZ}
Suppose there exists an adapted solution $(\hat{X}, \hat{Y}, \hat{Z}, \hat{Z}^0)$ to the FBSDE:
\begin{align}
d\hat{X}_t &= \frac{\partial}{\partial y} H(t, \hat{X}_t, \mu_t,\hat{Y}_t, \hat{\alpha}_t) dt + \sigma dW_t + \sigma^0 dW_t^0,\nonumber\\
&= \left( -\delta \hat{X}_t +\frac{1}{2 c_a}( \hat{Y}_t-c_i) \right)  dt +  \sigma  dW_t + \sigma^0  dW_t^0, \,\,\, \hat{X}_0=\xi_0 \label{FSDE}\\
d\hat{Y}_t &= -\frac{\partial}{\partial x} H(t, \hat{X}_t, \mu_t,\hat{Y}_t, \hat{\alpha}_t) dt + \hat{Z}_t dW_t + \hat{Z}_t^0 dW_t^0, \nonumber\\
&= \left( \delta Y_t +  c_p -  P(t, \mu_t) \right)dt + Z_t dW_t + Z_t^0 dW_t^0, \,\,\, \hat{Y}_T=0, \label{BSDE}
\end{align}
such that
\begin{align*}
\mathbb{E}\left[ \sup_{t\in[0, T]} (\hat{X}_t^2 + \hat{Y}_t^2) + \int_0^T \left( \hat{Z}_t^2 + (\hat{Z}_t^0)^2 \right) dt \right]<\infty,
\end{align*}
then, $\hat{\alpha_t} = \frac{1}{2c_a}(\hat{Y}_t-c_i)$ is the unique optimal control to problem \eqref{MFGP} given $(\mu_t)_{t\in[0, T]}$. 
\end{Proposition}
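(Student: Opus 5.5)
The argument is the sufficient-condition half of the Stochastic Maximum Principle, specialised to our linear–quadratic structure. Fix $\mu\in\mathcal{L}^2_{\mathbb{F}^0}([0,T];\mathbb{R})$ and let $(\hat X,\hat Y,\hat Z,\hat Z^0)$ be the given solution. First we identify $\hat\alpha$: since $\alpha\mapsto H(t,x,\mu,y,\alpha)$ is strictly concave (Corollary \ref{CCOH}) with derivative $y-c_i-2c_a\alpha$, its unique maximiser is $\hat\alpha=\frac{1}{2c_a}(y-c_i)$. The square-integrability of $\hat Y$ then gives $\hat\alpha\in\mathcal{H}^2_{\mathbb{F}}([0,T];\mathbb{R})$, so $\hat\alpha$ is admissible. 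With this choice of $\hat\alpha$, the system \eqref{FSDE}–\eqref{BSDE} is exactly the state equation \eqref{SDESU} together with the adjoint equation $d\hat Y_t=-\partial_x H\,dt+\dots$, where $\partial_x H=-\delta y+P(t,\mu)-c_p$.

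Next, take an arbitrary admissible $\alpha$ with state $X$ solving \eqref{SDESU} from the same initial condition $\xi_0$. The same noises drive both states, so $\Delta X:=X-\hat X$ satisfies the noise-free equation $d\Delta X_t=(-\delta\Delta X_t+\alpha_t-\hat\alpha_t)\,dt$ with $\Delta X_0=0$. We apply Itô's formula to $\hat Y_t\Delta X_t$ on $[0,T]$. Using $\hat Y_T=0$ and $\Delta X_0=0$, this yields
\begin{align*}
0=\mathbb{E}\Big[\int_0^T \big(\hat Y_t(-\delta\Delta X_t+\alpha_t-\hat\alpha_t)-\partial_xH(t,\hat X_t,\mu_t,\hat Y_t,\hat\alpha_t)\Delta X_t\big)dt\Big].
\end{align*}
No covariation term appears because $\Delta X$ has finite variation. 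The stochastic integral $\int \Delta X_t(\hat Z_t\,dW_t+\hat Z^0_t\,dW^0_t)$ is a true martingale. To justify this, note that $\mathbb{E}\sup_t|\Delta X_t|^2<\infty$, which follows from Gronwall and $\alpha,\hat\alpha\in\mathcal{H}^2$. Combined with $\hat Z,\hat Z^0\in\mathcal{H}^2$ and the Burkholder–Davis–Gundy inequality, this gives uniform integrability of the integral. I expect this integrability check to be the only delicate point; it is routine, with localisation by stopping times as a fallback.

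Finally, we write $f(t,x,\mu,\alpha)=H(t,x,\mu,y,\alpha)-(-\delta x+\alpha)y$. This gives
\begin{align*}
J(\alpha;\mu)-J(\hat\alpha;\mu)=\mathbb{E}\Big[\int_0^T\big(H(t,X_t,\mu_t,\hat Y_t,\alpha_t)-H(t,\hat X_t,\mu_t,\hat Y_t,\hat\alpha_t)-\hat Y_t(-\delta\Delta X_t+\alpha_t-\hat\alpha_t)\big)dt\Big].
\end{align*}
Adding the zero identity above, the integrand becomes
\begin{align*}
H(X,\alpha)-H(\hat X,\hat\alpha)-\partial_xH(\hat X,\hat\alpha)\Delta X-\partial_\alpha H(\hat X,\hat\alpha)(\alpha-\hat\alpha),
\end{align*}
since $\partial_\alpha H(\hat X_t,\mu_t,\hat Y_t,\hat\alpha_t)=0$. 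Joint concavity of $H$ in $(x,\alpha)$ (Corollary \ref{CCOH}) makes this nonpositive. In fact $H$ is linear in $x$ here, so the expression equals $-c_a(\alpha_t-\hat\alpha_t)^2$ exactly. Hence
\begin{align*}
J(\alpha;\mu)=J(\hat\alpha;\mu)-c_a\,\mathbb{E}\int_0^T(\alpha_t-\hat\alpha_t)^2\,dt,
\end{align*}
which proves optimality of $\hat\alpha$. Uniqueness follows at once: any other optimal control must satisfy $\alpha=\hat\alpha$ $dt\otimes d\mathbb{P}$-a.e., because $c_a>0$.
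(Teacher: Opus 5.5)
Your proposal is correct and follows the same route as the paper. The paper's proof simply invokes the sufficient half of the stochastic maximum principle (Theorem 6.4.6 in Pham's book), and you have unpacked that theorem's proof in this linear--quadratic setting: It\^{o}'s formula on $\hat Y_t(X_t-\hat X_t)$, the true-martingale check, and concavity of $H$. Your exact identity $J(\alpha;\mu)=J(\hat\alpha;\mu)-c_a\,\mathbb{E}\int_0^T(\alpha_t-\hat\alpha_t)^2\,dt$ is a welcome bonus, because it gives the uniqueness claim directly using only strict concavity in $\alpha$. The paper's stated strict joint concavity in $(x,\alpha)$ is in fact too strong, since $H$ is linear in $x$.
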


\begin{proof}
Immediate from \cite[Theorem 6.4.6, Page 150]{P2009}.
\end{proof}

Having stated Proposition \ref{GANSXYZZ}, the natural next step is to show that for any fixed $(\mu_t)_{t\in[0, T]}$, the FBSDE \eqref{FSDE}-\eqref{BSDE} is uniquely solvable.

\begin{theorem}\label{MTIS2}
For any fixed $\mu\in \mathcal{L}_{\mathbb{F}}^2([0, T]; \mathbb{R})$, the forward-backward system \eqref{FSDE}-\eqref{BSDE} admits a unique solution $(\hat{X}, \hat{Y}, \hat{Z}, \hat{Z}^0)\in\mathcal{L}_{\mathbb{F}}^2([0, T]; \mathbb{R})\times\mathcal{L}_{\mathbb{F}}^2([0, T]; \mathbb{R}) \times\mathcal{H}_{\mathbb{F}}^2([0, T]; \mathbb{R}) \times \mathcal{H}_{\mathbb{F}}^2([0, T]; \mathbb{R}) $.  
\end{theorem}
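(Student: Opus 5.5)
The key observation is that, for fixed $\mu$, the system \eqref{FSDE}--\eqref{BSDE} is only \emph{triangularly} coupled. The backward equation \eqref{BSDE} involves neither $\hat X$ nor $\hat\alpha$: its driver is $\delta y + c_p - P(t,\mu_t)$, which is affine in $y$ and carries the exogenous input $\mu$. So I first solve the BSDE on its own, then feed $\hat Y$ into the forward equation, which becomes a linear SDE with a given adapted drift. No fixed-point or monotonicity argument for fully coupled FBSDEs is needed.

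\textbf{Step 1 (backward equation).} By Assumption \ref{PILC}, $|P(t,\bar x)|\le |P(t,0)|+L|\bar x|$. I will assume, as is implicit in the model, that $t\mapsto P(t,0)$ is bounded (e.g.\ measurable and bounded on $[0,T]$). Then $\mu\in\mathcal{L}^2_{\mathbb{F}}$ gives
\[
\mathbb{E}\int_0^T |c_p-P(t,\mu_t)|^2\,dt<\infty .
\]
The driver $g(t,y)=\delta y+c_p-P(t,\mu_t)$ is progressively measurable, Lipschitz in $y$ with constant $\delta$, and independent of $(z,z^0)$. The terminal value is $0$. The Pardoux--Peng theorem for Lipschitz BSDEs driven by the two-dimensional Brownian motion $(W,W^0)$ under the filtration $\mathbb{F}$ (the augmented filtration generated by $\xi_0$, $W$, $W^0$, so martingale representation holds with the initial $\sigma$-field included) then yields a unique triple $(\hat Y,\hat Z,\hat Z^0)\in\mathcal{L}^2_{\mathbb{F}}\times\mathcal{H}^2_{\mathbb{F}}\times\mathcal{H}^2_{\mathbb{F}}$.

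One can also write the solution explicitly. Applying It\^o's formula to $e^{-\delta t}\hat Y_t$ and taking conditional expectations gives
\[
\hat Y_t=\mathbb{E}\Big[\int_t^T e^{-\delta(s-t)}\big(P(s,\mu_s)-c_p\big)\,ds\,\Big|\,\mathcal{F}_t\Big].
\]
The estimate $\mathbb{E}\sup_t|\hat Y_t|^2<\infty$ then follows from Doob's inequality, and $(\hat Z,\hat Z^0)$ come from the martingale representation theorem. This explicit route gives existence, and uniqueness follows from the standard a priori estimate (It\^o applied to $|Y-Y'|^2$ plus Gronwall).

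\textbf{Step 2 (forward equation).} With $\hat Y$ fixed, \eqref{FSDE} is the linear SDE
\[
d\hat X_t=\big(-\delta\hat X_t+b_t\big)dt+\sigma dW_t+\sigma^0dW^0_t,\qquad b_t=\tfrac{1}{2c_a}(\hat Y_t-c_i)\in\mathcal{H}^2_{\mathbb{F}} .
\]
It has the unique strong solution
\[
\hat X_t=e^{-\delta t}\xi_0+\int_0^t e^{-\delta(t-s)}\big(b_s\,ds+\sigma dW_s+\sigma^0dW^0_s\big).
\]
Uniqueness holds because the difference of two solutions satisfies $dD=-\delta D\,dt$ with $D_0=0$. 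The bound $\mathbb{E}\sup_t|\hat X_t|^2<\infty$ follows from $\mathbb{E}\xi_0^2<\infty$, Cauchy--Schwarz on the drift integral, and BDG on the stochastic integrals.

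\textbf{Step 3 (uniqueness of the system).} Any solution of the full system must have its $(Y,Z,Z^0)$ component solving the decoupled BSDE, so it coincides with the triple from Step 1. Its $X$ component then solves the SDE of Step 2 and is unique.

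\textbf{Main obstacle.} The only delicate point is Step 1. One must verify the square-integrability of the free term $P(t,\mu_t)$, which uses the linear growth from the Lipschitz property and requires the (mild, implicit) boundedness of $P(\cdot,0)$. One must also make sure the martingale representation is valid in $\mathbb{F}$ despite the initial randomness $\xi_0$. This holds because $\xi_0$ is independent of $(W,W^0)$, so representation applies conditionally on $\mathcal{F}_0=\sigma(\xi_0)$.
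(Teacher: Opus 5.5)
Your proof is correct and takes the same route as the paper. You note that for fixed $\mu$ the backward equation \eqref{BSDE} does not involve $\hat X$, so you solve it on its own as a linear Lipschitz BSDE, with the explicit form $\hat Y_t=\mathbb{E}\bigl[\int_t^T e^{-\delta(s-t)}(P(s,\mu_s)-c_p)\,ds\,\big|\,\mathcal{F}_t\bigr]$, and then treat \eqref{FSDE} as a linear SDE with a given adapted drift, just as the paper does by citing standard BSDE and linear SDE results. You also make explicit two points the paper leaves implicit: $P(\cdot,0)$ must be measurable and square-integrable in $t$ so that $P(t,\mu_t)\in\mathcal{H}^2_{\mathbb{F}}$, and martingale representation holds in $\mathbb{F}$ despite the independent initial datum $\xi_0$.
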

\begin{proof}
Before proceeding to the well-posedness of the FBSDE, note that for any given $(\mu_t)_{t\in[0, T]}$, BSDE \eqref{BSDE} is not coupled with SDE \eqref{FSDE} and hence its well-posedness can be established separately. With \eqref{BSDE} of linear form, the conclusion follows easily from \cite[Theorem 4.3.1, Page 84]{ZJ2017}. Then, given that $\hat{Y}_t = \mathbb{E}\left[\int_t^T e^{-\delta(s-t)} \left( P(s, \mu_s) - c_p \right) ds |\mathcal{F}_t \right] $, an application of general theory of linear stochastic differential equations in \cite[Theorem 3.3.1, Page 68]{ZJ2017} yields the desired assertion.
\end{proof}

The following corollary states that for any given $\mu$, the stochastic control problem for a representative producer is uniquely solvable.
\begin{Cor}
For any fixed $\mu\in\mathcal{L}_{\mathbb{F}}^2([0, T]; \mathbb{R})$, the stochastic control problem \eqref{MFGP} admits a unique optimal control given by $\hat{\alpha}_t = \frac{1}{2c_a}\left(\hat{Y}_t - c_i\right)$, where $(\hat{X}, \hat{Y}, \hat{Z}, \hat{Z}^0)\in\mathcal{L}_{\mathbb{F}}^2([0, T]; \mathbb{R}) \times \mathcal{L}_{\mathbb{F}}^2([0, T]; \mathbb{R})\times \mathcal{H}_{\mathbb{F}}^2([0, T]; \mathbb{R}) \times \mathcal{H}_{\mathbb{F}}^2([0, T]; \mathbb{R}) $ is the unique solution to the FBSDEs \eqref{FSDE}-\eqref{BSDE}.
\end{Cor}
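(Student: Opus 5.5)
The corollary follows by combining Theorem \ref{MTIS2} with Proposition \ref{GANSXYZZ}, plus a separate check of uniqueness of the optimal control.

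\textbf{Step 1: existence of the FBSDE solution.} Fix $\mu\in\mathcal{L}_{\mathbb{F}}^2([0,T];\mathbb{R})$. Theorem \ref{MTIS2} gives a unique solution $(\hat X,\hat Y,\hat Z,\hat Z^0)$ of \eqref{FSDE}--\eqref{BSDE} in the stated product space. Membership in $\mathcal{L}^2_{\mathbb{F}}\times\mathcal{L}^2_{\mathbb{F}}\times\mathcal{H}^2_{\mathbb{F}}\times\mathcal{H}^2_{\mathbb{F}}$ is exactly the integrability condition
\[
\mathbb{E}\Big[\sup_{t}(\hat X_t^2+\hat Y_t^2)+\int_0^T(\hat Z_t^2+(\hat Z_t^0)^2)\,dt\Big]<\infty
\]
required in Proposition \ref{GANSXYZZ}. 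I would briefly note that Proposition \ref{GANSXYZZ} was stated for $\mu$ adapted to $\mathbb{F}^0$, while the corollary allows $\mu$ adapted to $\mathbb{F}$. The sufficiency argument of the SMP only uses that $t\mapsto P(t,\mu_t)$ is an $\mathbb{F}$-progressively measurable, square-integrable input, and this holds because $P$ is Lipschitz (Assumption \ref{PILC}).

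\textbf{Step 2: admissibility and optimality of $\hat\alpha$.} Set $\hat\alpha_t=\frac{1}{2c_a}(\hat Y_t-c_i)$. Since $\hat Y\in\mathcal{L}^2_{\mathbb{F}}$, the control $\hat\alpha$ is progressively measurable and square integrable, so $\hat\alpha\in\mathcal{H}^2_{\mathbb{F}}$. By construction $\hat\alpha_t$ maximises the strictly concave map $\alpha\mapsto H(t,\hat X_t,\mu_t,\hat Y_t,\alpha)$, because $\partial_\alpha H=y-c_i-2c_a\alpha$. Proposition \ref{GANSXYZZ} then gives that $\hat\alpha$ is optimal for \eqref{MFGP}.

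\textbf{Step 3: uniqueness of the optimal control.} This is the step needing care, though Proposition \ref{GANSXYZZ} already asserts it. To make it self-contained, take any admissible $\alpha$ with state $X$, and use the terminal condition $\hat Y_T=0$, the concavity of $H$ (Corollary \ref{CCOH}), and Itô's formula applied to $(X_t-\hat X_t)\hat Y_t$. The standard SMP computation gives
\[
J(\hat\alpha;\mu)-J(\alpha;\mu)\ \ge\ c_a\,\mathbb{E}\int_0^T(\alpha_t-\hat\alpha_t)^2\,dt .
\]
The quadratic term comes from the strict concavity in $\alpha$ with modulus $c_a$; there is no term in $x$, since $H$ is linear in $x$. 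The local martingale terms are true martingales because $X,\hat X\in\mathcal{L}^2$ and $\hat Z,\hat Z^0\in\mathcal{H}^2$. Hence if $J(\alpha;\mu)=J(\hat\alpha;\mu)$, then $\alpha=\hat\alpha$ $dt\otimes d\mathbb{P}$-a.e., which proves uniqueness.
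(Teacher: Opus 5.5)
Your proposal is correct and follows the paper's route: the paper gives no separate proof and treats the corollary as the immediate combination of Theorem \ref{MTIS2} (unique solvability of \eqref{FSDE}--\eqref{BSDE}) with Proposition \ref{GANSXYZZ} (SMP sufficiency), which is exactly your Steps 1--2. Your Step 3 supplements this rather than replacing it. The It\^o computation gives the exact identity $J(\hat\alpha;\mu)-J(\alpha;\mu)=c_a\,\mathbb{E}\int_0^T(\alpha_t-\hat\alpha_t)^2\,dt$, so uniqueness rests only on strict concavity in $\alpha$; this is the correct justification, since $H$ is linear in $x$ and its $(x,\alpha)$-Hessian is only negative semidefinite, not negative definite as Corollary \ref{CCOH} claims. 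Your remark on $\mathbb{F}$ versus $\mathbb{F}^0$ also correctly patches the mismatch between the hypotheses of Proposition \ref{GANSXYZZ} and the corollary.
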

 
We now present and verify our main result that the MFGs stated in Section \ref{MFGs} is uniquely solvable; in particular, this corresponds to the matching problem (ii) (or the fixed point step) in Section \ref{MFGs}.

Let us first denote $\mu_t^X= \mathbb{E}\left[ X_t | \mathcal{F}_t^0 \right]$, $\mu_t^Y = \mathbb{E}\left[ Y_t | \mathcal{F}_t^0 \right]$. Now, taking conditional expectation given filtration set $\mathcal{F}_t^0$ generated by $(W_t^0)_{t\in[0, T]}$ in FBSDE \eqref{FSDE}-\eqref{BSDE} and using the fact that, in equilibrium (that is, after solving for the fixed point), we have $\mu_t=\mu_t^X$ for all $t\in[0, T]$, we obtain:
\begin{align}
d\mu_t^X &=   \left( -\delta \mu_t^X + \frac{\mu_t^Y - c_i}{2c_a}  \right) dt + \sigma^0 dW_t^0,\,\,\,\,\,\,\,\,\,\, \mu_0^X = \mathbb{E}\left(\xi_0 \right),\label{MUFSDE}\\
d\mu_t^Y &= \left( \delta \mu_t^Y +  c_p - P(t, \mu_t^X) \right) dt + \hat{Z}_t^0 dW_t^0, \,\,\,\,\,\,\,\,\,\,\,\,\,\, \mu_T^Y=0,\label{MUBSDE}
\end{align}
where equation \eqref{MUBSDE}, that is, $(\mu^Y, \hat{Z}^0)$ being $\mathbb{F}^0$-progressively measurable, follows simply from the process $\mu^X$ being $\mathbb{F}^0$-progressively measurable, see \cite[Page 22]{BP2019}; and note that well-posedness of \eqref{MUFSDE}-\eqref{MUBSDE} implies the uniqueness and existence of the solution to the MFG, that is, the mapping $\mu_t^X= \mathbb{E}\left[ X_t | \mathcal{F}_t^0 \right]$ has a unique fixed point.

\begin{Proposition} \label{GWPOMFG}
The FBSDEs \eqref{MUFSDE}-\eqref{MUBSDE} admits a unique solution $(\mu^X, \mu^Y, \hat{Z}^0)\in \mathcal{L}_{\mathbb{F}^0}^2([0, T]; \mathbb{R})\times\mathcal{L}_{\mathbb{F}^0}^2([0, T]; \mathbb{R})\times\mathcal{H}_{\mathbb{F}^0}^2([0, T]; \mathbb{R})$ and it holds that for all $t\in[0, T]$, $\mu_t^Y = \psi(t, \mu_t^X)$, where function $\psi$ is continuous.
\end{Proposition}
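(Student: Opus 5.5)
The plan is to treat \eqref{MUFSDE}--\eqref{MUBSDE} as a fully coupled FBSDE driven only by $W^0$, with additive noise, Lipschitz coefficients and a monotone coupling. Two features drive everything: the forward drift depends on $\mu^Y$ linearly with positive coefficient $\frac{1}{2c_a}$, and the backward driver depends on $\mu^X$ through $-P(t,\mu^X)$, which is nondecreasing in $\mu^X$ by Assumption \ref{PILC}. These are exactly the monotonicity conditions of Peng--Wu / Hu--Peng type, in the form stated in \cite[Theorem 4.3.1 and Section 8]{ZJ2017} or in \cite{CD2018}, with the convention for a maximisation problem. I would therefore invoke a monotonicity result for existence and uniqueness, and separately construct the decoupling field $\psi$.

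\textbf{Step 1 (uniqueness via monotonicity).} Take two solutions $(\mu^X,\mu^Y,\hat Z^0)$ and $(\tilde\mu^X,\tilde\mu^Y,\tilde Z^0)$, and write $\Delta X=\mu^X-\tilde\mu^X$ and $\Delta Y=\mu^Y-\tilde\mu^Y$. Apply It\^o's formula to $\Delta X_t\Delta Y_t$ on $[0,T]$. The noise in $\Delta X$ cancels, and $\Delta X_0=0$, $\Delta Y_T=0$, so the $\pm\delta\Delta X\Delta Y$ terms cancel and we get
\begin{align*}
0=\mathbb{E}\int_0^T\Big(\frac{1}{2c_a}(\Delta Y_t)^2-\Delta X_t\big(P(t,\mu^X_t)-P(t,\tilde\mu^X_t)\big)\Big)dt .
\end{align*}
Both terms in the integrand are nonnegative, because $P$ is decreasing. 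Hence $\Delta Y\equiv0$. The forward equation then gives $\Delta X\equiv0$, since it is a linear ODE in $\Delta X$ with zero initial value, and the martingale representation gives $\hat Z^0=\tilde Z^0$.

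\textbf{Step 2 (existence).} I would use the method of continuation in the style of Peng--Wu. Interpolate the driver as $\lambda\big(c_p-P(t,x)\big)-(1-\lambda)x$ and the forward drift coefficient of $y$ as $\lambda\frac{1}{2c_a}+(1-\lambda)$. The same It\^o computation as in Step 1 gives an a priori estimate with constants uniform in $\lambda$, so solvability at $\lambda=0$, a decoupled linear system, propagates to $\lambda=1$ in steps of fixed size. An alternative is to solve the deterministic-flavoured Markovian system directly on small intervals using Lipschitz contraction and glue them with a Lipschitz decoupling field. The monotone estimate bounds the Lipschitz constant of $\psi$ uniformly in time, which is what lets the small-interval construction be iterated.

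\textbf{Step 3 (the decoupling field).} Since the system is Markovian in $W^0$, for each $(t,x)$ let $(\mu^{X,t,x},\mu^{Y,t,x})$ be the solution started from $x$ at time $t$, and set $\psi(t,x):=\mu^{Y,t,x}_t$, which is deterministic by Blumenthal's 0--1 law. Uniqueness plus the flow property give $\mu^Y_t=\psi(t,\mu^X_t)$. For continuity, the It\^o argument of Step 1 applied to initial conditions $x,x'$ at time $t$ yields $|\psi(t,x)-\psi(t,x')|\le C|x-x'|$, obtained from the $\Delta X\Delta Y$ identity combined with standard SDE/BSDE stability estimates. Continuity in $t$ follows from $L^2$-continuity of the solutions in the initial time.

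The main obstacle is making Step 2 rigorous with uniform constants, that is, controlling $\Delta X$ in terms of $\Delta Y$ when the monotonicity is only weak in $x$: $P$ is merely nonincreasing, not strictly decreasing. I would handle this with the variant of the continuation method that requires strict monotonicity in only one variable, here the $\frac{1}{2c_a}|\Delta Y|^2$ term, which \cite{ZJ2017} allows provided the forward coefficient's dependence on $y$ is nondegenerate. That is the case here, since $\frac{1}{2c_a}>0$.
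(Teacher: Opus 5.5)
Your Steps 1 and 3 are sound. Step 2 does not work as written, because your interpolation destroys the monotonicity it is meant to preserve.

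With forward coefficient $k_\lambda=\lambda\frac{1}{2c_a}+(1-\lambda)$ and $dY$-drift $\delta Y+\lambda(c_p-P(t,X))-(1-\lambda)X$, the It\^o computation of Step 1 gives, for two solutions with the same data,
\[
0=\mathbb{E}\int_0^T\Big(k_\lambda|\Delta Y_t|^2-\lambda\,\Delta X_t\big(P(t,X_t)-P(t,\tilde X_t)\big)-(1-\lambda)|\Delta X_t|^2\Big)dt .
\]
The added term has the opposite sign to the other two: $-P(t,\cdot)$ is nondecreasing, whereas $-x$ is decreasing. So there is no $\lambda$-uniform a priori estimate.

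Moreover, the $\lambda=0$ system is not decoupled. It reads $dX=(-\delta X+Y)dt+\sigma^0dW^0$, $dY=(\delta Y-X)dt+Z\,dW^0$ (plus data), and its drift matrix has eigenvalues $\pm i\omega$ with $\omega=\sqrt{1-\delta^2}$. The deterministic pair $x(t)=\omega^{-1}\sin\omega t$, $y(t)=\cos\omega t+\frac{\delta}{\omega}\sin\omega t$ solves the homogeneous equations with $x(0)=0$. It also satisfies $y(T)=0$ when $\omega T=\frac{\pi}{2}+\arcsin\delta$. Adding multiples of it to any solution gives another solution, so for such $T$ the base case is not even uniquely solvable. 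Equivalently, the Riccati equation $\dot p=2\delta p-p^2-1$, $p(T)=0$, blows up at that horizon. The continuation cannot start.

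\textbf{Repair via continuation.} Flip the sign, e.g.\ use the $dY$-drift $\delta Y+\lambda(c_p-P(t,X))+(1-\lambda)X$. The identity then yields $\min\{1,\frac{1}{2c_a}\}\,\mathbb{E}\int_0^T|\Delta Y_t|^2dt\le$ (perturbation terms), uniformly in $\lambda$. After that, $\Delta X$ follows from the forward equation and $\Delta Z$ from the BSDE. The base case is still coupled, but it is solvable through $Y=pX+q$: the Riccati equation $\dot p=2\delta p-p^2+1$, $p(T)=0$, has a global solution with values in $(\delta-\sqrt{1+\delta^2},0]$.

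Be careful with the citation. The usual one-variable monotonicity conditions (e.g.\ Peng--Wu with $\beta_1=0$) require dissipation in $|\Delta y|^2+|\Delta z|^2$, which you do not have. You do not need it here, since no coefficient depends on $z$ and the noise is additive. But then you must run the estimate yourself rather than cite it.

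\textbf{Repair via small intervals.} Your alternative route does work, and Step 3 supplies the missing mechanism.
\begin{itemize}
\item The identity with $\Delta X_t=x-x'$ gives $(x-x')(\psi(t,x)-\psi(t,x'))\le 0$, so each $\psi(s,\cdot)$ is nonincreasing.
\item Hence $\frac{d}{ds}|\Delta X_s|^2\le-2\delta|\Delta X_s|^2$ pathwise.
\item Therefore $|\psi(t,x)-\psi(t,x')|\le\int_t^T e^{-\delta(s-t)}L\,\mathbb{E}|\Delta X_s|\,ds\le L(T-t)|x-x'|$.
\end{itemize}
This Lipschitz bound is uniform in time, which lets you iterate small-time solvability over $[0,T]$.

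\textbf{Comparison with the paper.} The paper obtains existence, uniqueness and a merely continuous decoupling field in one step from Pardoux--Tang (Theorem 5.1 and Corollary 4.1). Your repaired argument is self-contained and gives more: $\psi$ is nonincreasing and Lipschitz with constant at most $LT$. That is the regularity the later corollary on Markov controls actually relies on.
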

\begin{proof}
The conclusion follows directly from \cite[Theorem 5.1 and Corollary 4.1]{PT1999}.
\end{proof}

\begin{theorem}
The MFG has a unique solution $(\mu^X, \hat{\alpha})$.
\end{theorem}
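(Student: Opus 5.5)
The proof reduces the fixed-point problem (ii) to the conditional McKean--Vlasov system \eqref{MUFSDE}--\eqref{MUBSDE} and then applies Proposition \ref{GWPOMFG}. The plan has two directions, existence and uniqueness, both resting on the observation that \eqref{BSDE} is decoupled from \eqref{FSDE} once $\mu$ is fixed, and that conditioning on $\mathcal{F}_t^0$ commutes with the linear dynamics.

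\emph{Existence.} Let $(\mu^X,\mu^Y,\hat Z^0)$ be the unique $\mathbb{F}^0$-adapted solution given by Proposition \ref{GWPOMFG}, and fix $\mu:=\mu^X\in\mathcal{L}^2_{\mathbb{F}^0}([0,T];\mathbb{R})\subset\mathcal{L}^2_{\mathbb{F}}([0,T];\mathbb{R})$.
\begin{enumerate}
\item By Theorem \ref{MTIS2} and the subsequent corollary, the representative producer's problem has the unique optimal control $\hat\alpha_t=(\hat Y_t-c_i)/(2c_a)$, with $(\hat X,\hat Y,\hat Z,\hat Z^0)$ solving \eqref{FSDE}--\eqref{BSDE}.
\item Since the driver of \eqref{BSDE} depends only on the $\mathbb{F}^0$-adapted process $\mu$, the pair $(\mu^Y,\hat Z^0)$ with $Z\equiv0$ solves \eqref{BSDE} in the larger filtration $\mathbb{F}$. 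Here I use that $W^0$ remains an $\mathbb{F}$-Brownian motion and that $\mathbb{F}^0$-martingales are $\mathbb{F}$-martingales by independence. Uniqueness in Theorem \ref{MTIS2} then gives $\hat Y=\mu^Y$. Alternatively, I can read this off the explicit formula $\hat Y_t=\mathbb{E}[\int_t^T e^{-\delta(s-t)}(P(s,\mu_s)-c_p)ds\,|\,\mathcal{F}_t]$, using that conditioning on $\mathcal{F}_t$ versus $\mathcal{F}^0_t$ agrees for $\mathcal{F}^0_T$-measurable integrands by independence of $(\xi_0,W)$ from $W^0$.
\item Take the conditional expectation given $\mathcal{F}^0_t$ of the linear SDE \eqref{FSDE}, written in integral form. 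I will use $\mathbb{E}[\xi_0|\mathcal{F}^0_t]=\mathbb{E}\xi_0$, $\mathbb{E}[W_t|\mathcal{F}^0_t]=0$, and a conditional Fubini argument; the latter is justified since $\mathbb{E}[\hat X_s|\mathcal{F}^0_t]=\mathbb{E}[\hat X_s|\mathcal{F}^0_s]$ for $s\le t$, by independence of future increments of $W^0$ from $(\xi_0,W,W^0_{\cdot\wedge s})$. This shows that $m_t:=\mathbb{E}[\hat X_t|\mathcal{F}^0_t]$ solves the linear SDE $dm_t=(-\delta m_t+(\mu^Y_t-c_i)/(2c_a))dt+\sigma^0dW^0_t$ with $m_0=\mathbb{E}\xi_0$.
\item This is the forward equation \eqref{MUFSDE} with $\mu^Y$ given, and pathwise uniqueness for linear SDEs yields $m=\mu^X$.
\end{enumerate}
So $\mu^X$ is a fixed point and $(\mu^X,\hat\alpha)$ solves the MFG.

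\emph{Uniqueness.} Suppose $(\tilde\mu,\tilde\alpha)$ is another MFG solution, so $\tilde\mu_t=\mathbb{E}[\tilde X_t|\mathcal{F}^0_t]$ is $\mathbb{F}^0$-adapted and in $\mathcal{L}^2$.
\begin{enumerate}
\item By the corollary, $\tilde\alpha=(\tilde Y-c_i)/(2c_a)$, where $(\tilde X,\tilde Y,\dots)$ solves \eqref{FSDE}--\eqref{BSDE} with $\mu=\tilde\mu$.
\item Exactly as in existence, conditioning both equations on $\mathcal{F}^0_t$ shows that $(\tilde\mu,\mathbb{E}[\tilde Y_t|\mathcal{F}^0_t])$, together with a martingale-representation integrand $\tilde Z^0$ with respect to $W^0$, solves \eqref{MUFSDE}--\eqref{MUBSDE}. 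Since $\tilde Y$ is already $\mathbb{F}^0$-adapted by the argument above, $\mathbb{E}[\tilde Y_t|\mathcal{F}^0_t]=\tilde Y_t$.
\item Uniqueness in Proposition \ref{GWPOMFG} gives $\tilde\mu=\mu^X$.
\item The uniqueness of the optimal control for fixed $\mu$ then gives $\tilde\alpha=\hat\alpha$.
\end{enumerate}

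The main obstacle is the measurability step: showing that $\hat Y$ is $\mathbb{F}^0$-adapted and that conditional expectation given $\mathcal{F}^0_t$ passes through the stochastic dynamics. That is, justifying $\mathbb{E}[\int_0^t\hat X_sds|\mathcal{F}^0_t]=\int_0^t\mathbb{E}[\hat X_s|\mathcal{F}^0_s]ds$ and $\mathbb{E}[\int_0^t\sigma dW_s|\mathcal{F}^0_t]=0$. I would handle this via the compatibility, or immersion, property: $\mathcal{F}^0_t$ and the filtration $\sigma(\xi_0,W_{\cdot\wedge t},W^0_{\cdot\wedge t})$ are such that $W^0_{\cdot}-W^0_t$ is independent of $\mathcal{F}_t$. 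I would cite \cite[Page 22]{BP2019} as in the derivation of \eqref{MUFSDE}--\eqref{MUBSDE}. Everything else is linear and follows from results already stated.
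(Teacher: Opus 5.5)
Your proposal is correct and takes the same route as the paper, which reduces the fixed-point problem to the conditional FBSDE \eqref{MUFSDE}--\eqref{MUBSDE} and then invokes Proposition \ref{GWPOMFG}. You spell out what the paper compresses into the remark before that proposition: both directions of the reduction (a solution of the conditional system gives a fixed point, and every fixed point gives such a solution, since $\hat{Y}$ is $\mathbb{F}^0$-adapted whenever $\mu$ is), together with the independence arguments that justify conditioning on $\mathcal{F}^0_t$.
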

\begin{proof}
This follows immediately from Proposition \ref{GWPOMFG}.
\end{proof}

We conclude this section by commenting on the sensitivity of $\hat{\alpha}_t$ with respect to the parameters  of the problem: (i) the higher the costs of production and installation, the less generation capacity is developed; (ii) $\hat{\alpha}_t$ increases as the spot price increases, this is to be expected because the spot market offers better rewards; (iii) when the adjustment cost increases, there is less new capacity installed.

\subsubsection{Numerical Solutions and Price Functions} \label{MFGSNM}
We follow the numerical scheme proposed in \cite[Page 7]{GMW2022}. First, by rewriting the forward-backward system \eqref{MUFSDE} - \eqref{MUBSDE} in a forward manner, we consider the Euler--Maruyama discretised forward system on a regular time grid $t_k = k\Delta t, \Delta t = \frac{T}{N}$ for $k\in\{0, \dots, N\}$:
\begin{align}
\mu_{t_{i+1}}^X &= \mu_{t_{i}}^X + \left( -\delta \mu_{t_{i}}^X + \frac{1}{c_a}  \left( \mu_{t_{i}}^Y - c_i\right)\right) \Delta t + \sigma^0 \Delta W_{i}^0,\label{DFSDE}\\
\mu_{t_{i+1}}^Y & =  \mu_{t_{i}}^Y  + \left( \delta \mu_{t_{i}}^Y + c_p - P\left(t_i, \mu_{t_{i}}^X\right)  \right) \Delta t + \hat{Z}_{t_i}^0 \Delta W_{i}^0, \label{DBSDE}
\end{align}
with the Brownian motion increments $\Delta W_i^0 = (W_{t_{i+1}}^0 - W_{t_i}^0)$ and terminal condition $\mu_{t_{N}}^Y = 0$, initial condition $\mu_{t_{0}}^X = \mathbb{E}(\xi_0)$; the process $\hat{Z}_{t_i}^0$ is approximated by a single feedback neural network $z_{\theta_z}(t_i, \mu_{t_{i}}^X)$ and $\mu_0^Y$ by a neural network $y_{\theta_y}(0, \mu_0^X)$ with parameters $\theta = (\theta_y, \theta_z)$. The motivation for such approximation comes from the decoupling field in Proposition \ref{GWPOMFG}. As mentioned before, the forward-backward system can be transformed into a forward system and an optimisation problem aiming to satisfy the terminal condition of the BSDE through minimising the loss function $\mathbb{E}\left( \left(\mu_{t_N}^Y\right)^2 \right)$. 

The following algorithms are implemented in Python with \textit{Tensorflow} Library and the neural network is trained with the Batch size of $2000$, $1000$ iterations, learning rate of $10^{-4}$.
\begin{breakablealgorithm}
\caption{Algorithm Solving FBSDEs \eqref{MUFSDE} - \eqref{MUBSDE}}
\begin{algorithmic}
\State Let $y_{\theta_y}(\cdot, \cdot)$ be a neural network with parameter $\theta_y$ defined on $\mathbb{R}\times[0, T]$, $z_{\theta_z}(\cdot, \cdot)$ be a neural network with parameter $\theta_z$ defined on $\mathbb{R}\times[0, T]$ so that $(\theta_y, \theta_z)$ is initialised with value $\pmb{\theta}_0 = (\theta_y^0, \theta_z^0)$. Let $K$ be the iterations, $B$ be the Batch size and $(\pmb{\rho}_k)_{k = 0, \dots, K-1}$ be the learning rate.
\For{$k$ from $0$ to $K$}
\State Choose a value for $\mathbb{E}(\xi_0)$, that is, choose the initial condition for all the samples
\State Set $\forall j\in B$,  $\mu_0^{Y, j} = y_{\theta_y^k}( \mathbb{E}(\xi_0), 0)$
\For{$i$ from $0$ to $N-1$}
\For{$j$ from $1$ to $B$}
\State $t_i = i \Delta t$
\State Sample $\Delta W_{i}^0$ from normal distribution with mean $0$ and variance $\Delta t$
\State $\mu_{t_{i+1}}^{X, j} = \mu_{t_{i}}^{X, j} + \left( -\delta \mu_{t_{i}}^{X, j} + \frac{1}{c_a}  \left( \mu_{t_{i}}^{Y, j} - c_i\right)\right) \Delta t + \sigma^0 \Delta W_{i}^0$
\State $\mu_{t_{i+1}}^{Y, j}  = \mu_{t_{i}}^{Y, j}  +  \left( \delta \mu_{t_{i}}^{Y, j} + c_p - P\left(t_i, \mu_{{t_{i}}}^{X, j}\right)  \right) \Delta t +z_{\theta_z}(t_i, \mu_{{t_{i}}}^{X, j}) \Delta W_{i}^0$
\EndFor
\EndFor
\State $J(\pmb{\theta}_k) = \frac{1}{B} \sum_{j=1}^B (\mu_{T}^{Y, j})^2$
\State Compute $\triangledown J(\pmb{\theta}_k)$, the gradient of $J$ with respect to $\mathbb{\theta}_k$, by back-propagation.
\State Update $\pmb{\theta}_{k+1} =  \pmb{\theta}_{k} - \pmb{\rho}_k \triangledown J(\pmb{\theta}_k)$.
\EndFor
\end{algorithmic}
\end{breakablealgorithm}

\begin{exam}[Solar Photovoltaic Technology] \label{SPTExam}
According to the renewable power generation cost in 2024 presented by the International Renewable Energy Agency \footnote{\href{https://www.irena.org/-/media/Files/IRENA/Agency/Publication/2025/Jul/IRENA_TEC_RPGC_in_2024_2025.pdf}{The Renewable power generation cost in 2024.}}, since 2010, the solar PV has experienced the most rapid cost reductions: (1) the average utility-scale operation and maintenance costs in Europe were reported at 11.3 USD per kW per year; (3) the global weighted average levelised cost of electricity (LCOE) of utility-scale PV plants was reported at 0.043 USD per kWh. The weighted average wholesale price for solar PV-generated electricity was reported approximately 125 dollars per MWh from a range of plausible values 38 dollars per MWh to 303 dollars per MWh. The median degradation rate of photovoltaic modules is $0.5\%$ per year \footnote{\href{https://www.nlr.gov/pv/lifetime}{Photovoltaic Lifetime Project.}}, according to the United State National Renewable Energy Laboratory. 
That is, assuming the average number of operating hours for a $1 MW$ solar PV (which requires 4 to 5 acres of land for installation of panels) is $2000$ hours per year, $c_p = \frac{11.3 \$ \slash kW}{2000h} = 0.00565 \$ \slash kWh = 5.65 \$ \slash MWh $, $c_i = 0.043 \$ \slash kWh - c_p = 0.03735 \$ \slash kWh = 37.35 \$ \slash MWh$, $c_a  =1 \$^2 \slash MWh^2$, $\delta = 0.5\%$. As for the price function, we assume it is influenced both by the supply and demand, that is, the marginal price model developed in \cite{ACL2013} by taking into account how the marginal capacity uncertainty contributes to future prices in:
\begin{align}
P(t, \mu_t^X) = 
\begin{cases}
\min\bigg\{ M,  p_0 + \frac{p_1}{(\mu_t^X - D)^r} \bigg\}, & \mu_t^X - D > 0, \\
M, & \mu_t^X - D \leq 0,
\end{cases}\label{PF1}
\end{align}
where $M = 300 \$\slash MWh$ is the capped spot price and $p_0= 30 \$ \slash MWh, p_1 = 27500 \$$, see \cite[page 6]{ACL2013} and $D$ is a constant for simplicity.

From Figure \ref{Exam02} and Figure \ref{Exam04}, we see that, due to the excess demand, the electricity spot price remains at its capped price such that the expected production profit remains at its highest,
\begin{align*}
\mu_t^{Y, T} = \left( 300 - 5.65 \right) \int_t^T e^{-0.005 (s-t)}  ds,
\end{align*} 
and thereby, $\hat{\alpha}_t > 0$ for $t\in[0, 0.865]$ and $T=1$ (for $t\in[0, 1.87]$ and $T=2$), that is, as long as the expected profit (of producing one unit of energy) does not exceed the installation cost $c_i$, more generation capacities are installed. In the long run, under such market mechanism, the supply shortage will be resolved.

\begin{figure}[H]
\centering
\subfloat[$\mu^X$ (MWh)]{\includegraphics[width=0.25\textwidth]{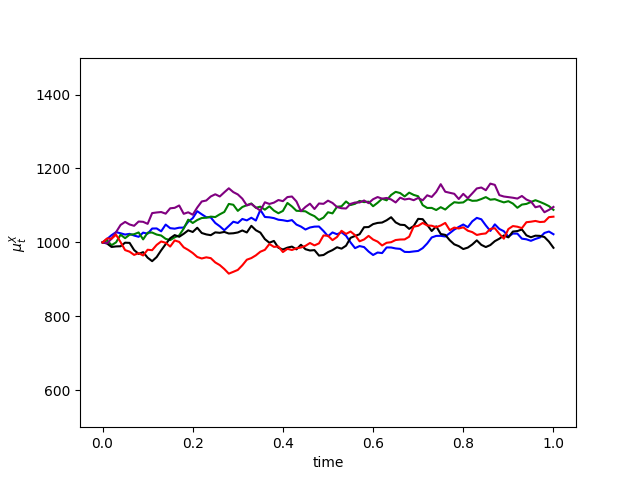}}
\subfloat[price function (USD)]{\includegraphics[width=0.25\textwidth]{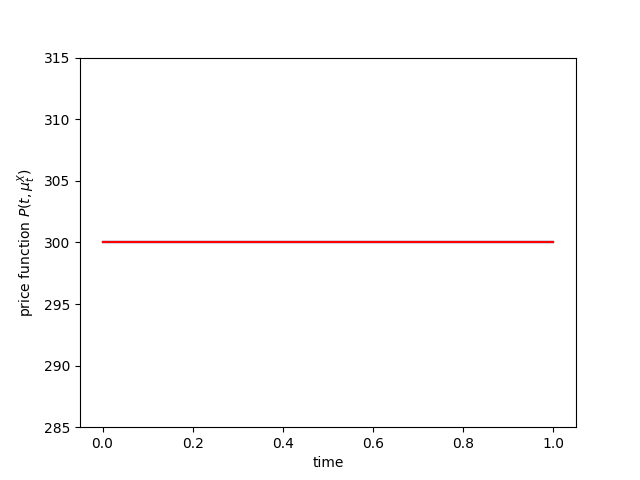}}
\subfloat[$\mu^Y$ (USD)]{\includegraphics[width=0.25\textwidth]{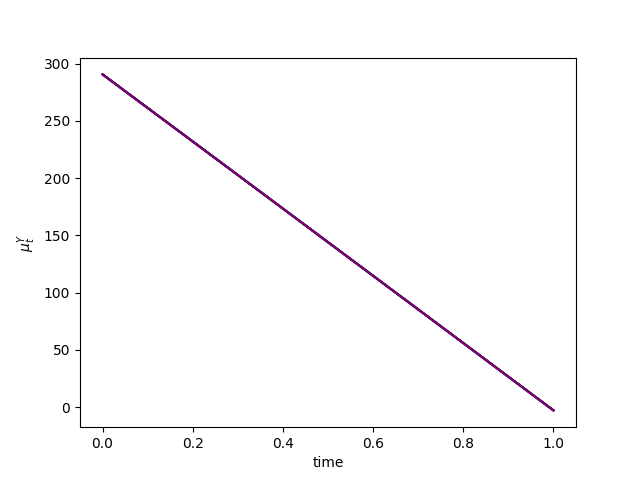}}
\subfloat[$\hat{\alpha}$ (MWh)]{\includegraphics[width=0.25\textwidth]{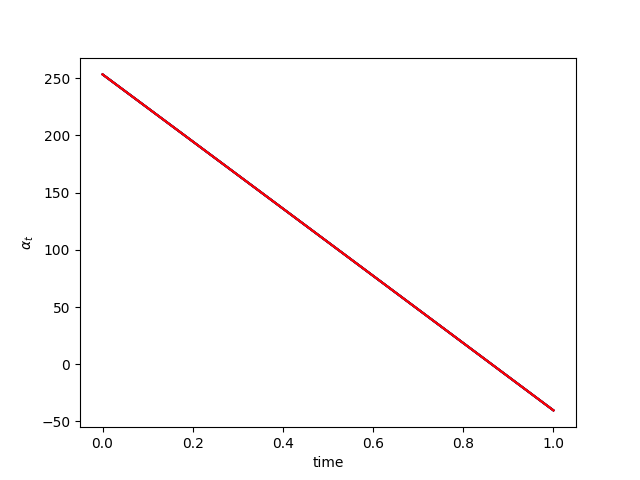}}
\caption{$\mu_0^X = 1000 MWh$, $D = 1500 MWh$, $\sigma^0 = 100$, $T=1$, $r = 1$.}
\label{Exam02}
\end{figure}
\begin{figure}[H]
\centering
\subfloat[$\mu^X$ (MWh)]{\includegraphics[width=0.25\textwidth]{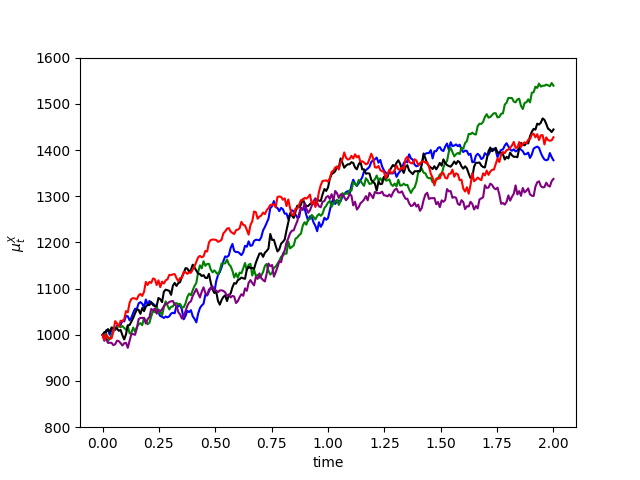}}
\subfloat[price function (USD)]{\includegraphics[width=0.25\textwidth]{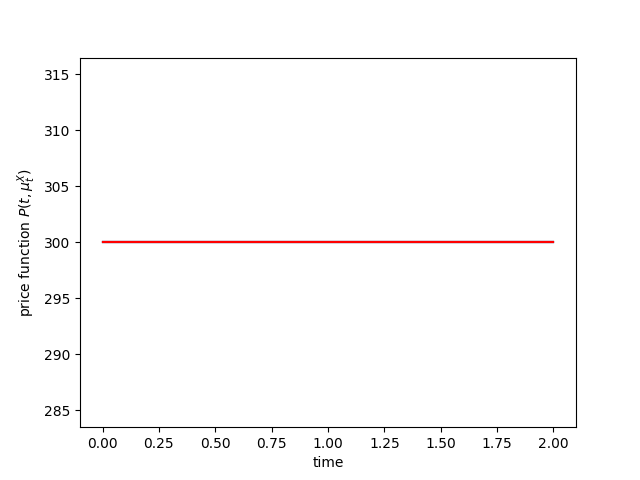}}
\subfloat[$\mu^Y$ (USD)]{\includegraphics[width=0.25\textwidth]{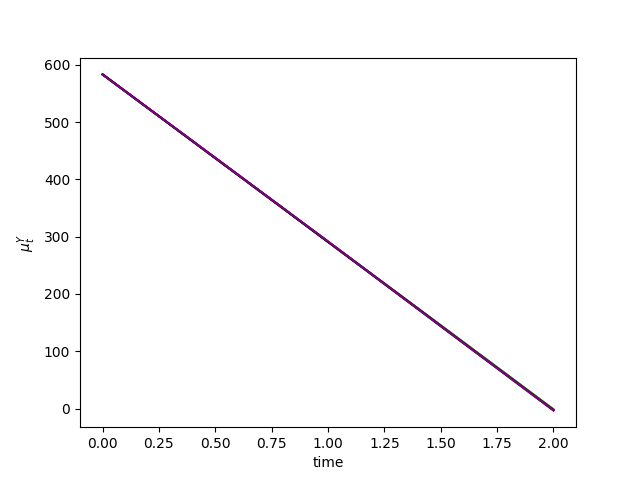}}
\subfloat[$\hat{\alpha}$ (MWh)]{\includegraphics[width=0.25\textwidth]{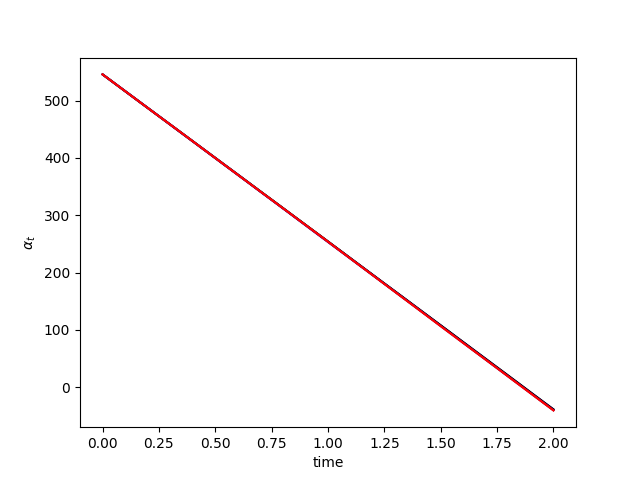}}
\caption{$\mu_0^X = 1000 MWh$, $D = 1500 MWh$, $\sigma^0 = 100$, $T=2$, $r = 1$.}
\label{Exam04}
\end{figure}
\begin{figure}[H]
\centering
\subfloat[$\mu^X$ (MWh)]{\includegraphics[width=0.25\textwidth]{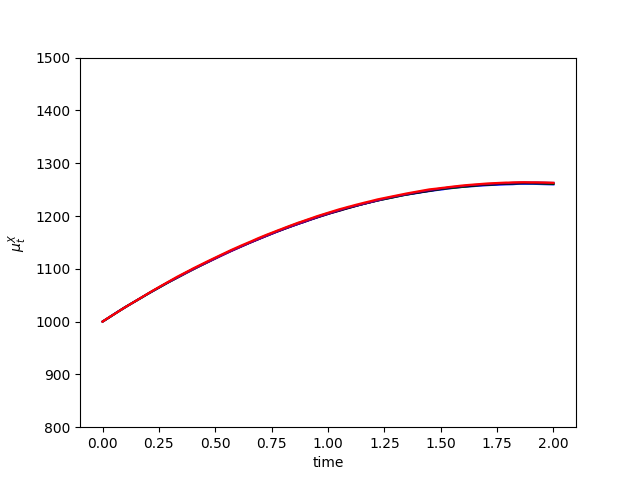}}
\subfloat[price function  in $\$ \slash MWh$]{\includegraphics[width=0.25\textwidth]{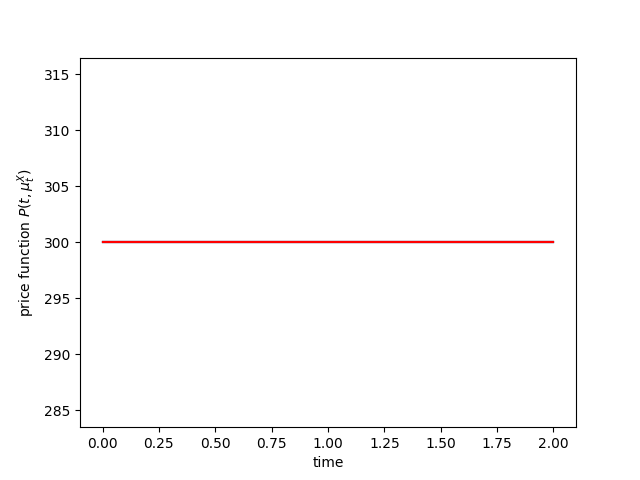}}
\subfloat[$\mu^Y$ in $\$ \slash MWh$]{\includegraphics[width=0.25\textwidth]{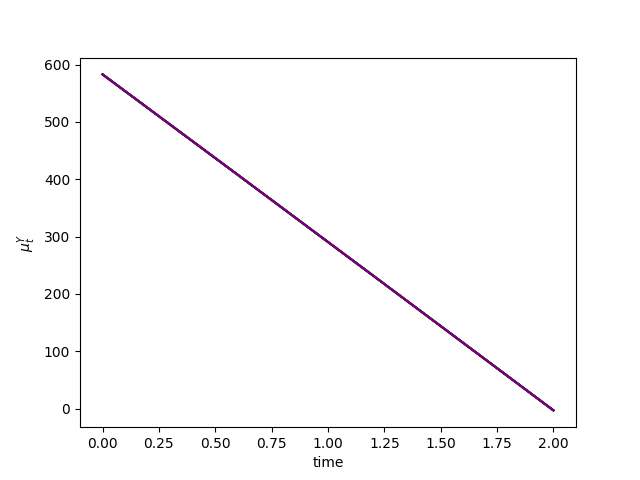}}
\subfloat[$\hat{\alpha}$ (MWh)]{\includegraphics[width=0.25\textwidth]{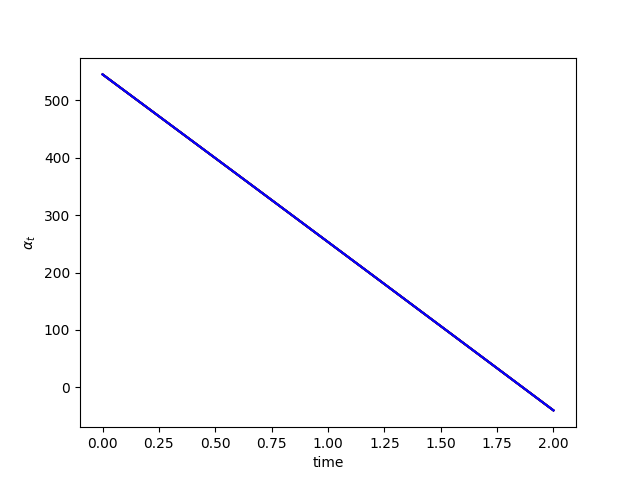}}
\caption{$\mu_0^X = 1000 MWh$, $D = 1500 MWh$, $\sigma^0 = 1$, $T=2$, $r = 1$.}
\label{Exam002}
\end{figure}
\begin{figure}[H]
\centering
\subfloat[$\mu^X$]{\includegraphics[width=0.25\textwidth]{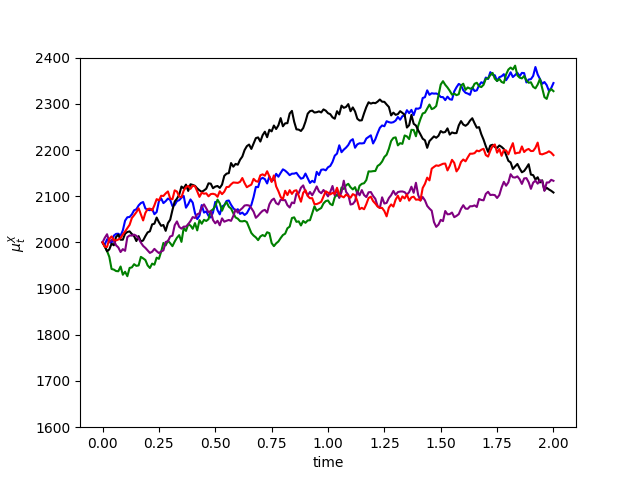}}
\subfloat[price function]{\includegraphics[width=0.25\textwidth]{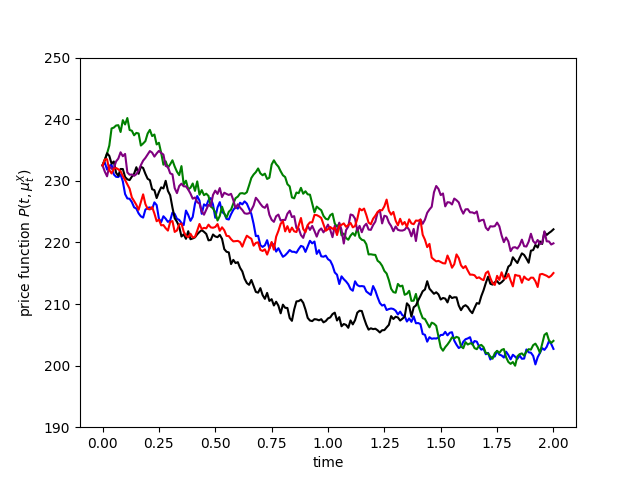}}
\subfloat[$\mu^Y$]{\includegraphics[width=0.25\textwidth]{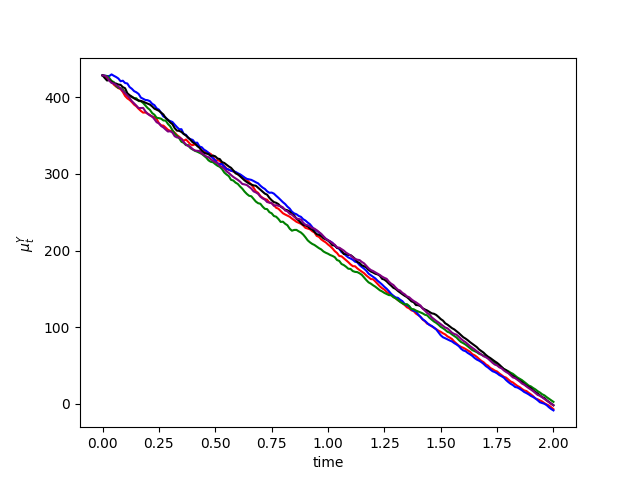}}
\subfloat[$\hat{\alpha}$]{\includegraphics[width=0.25\textwidth]{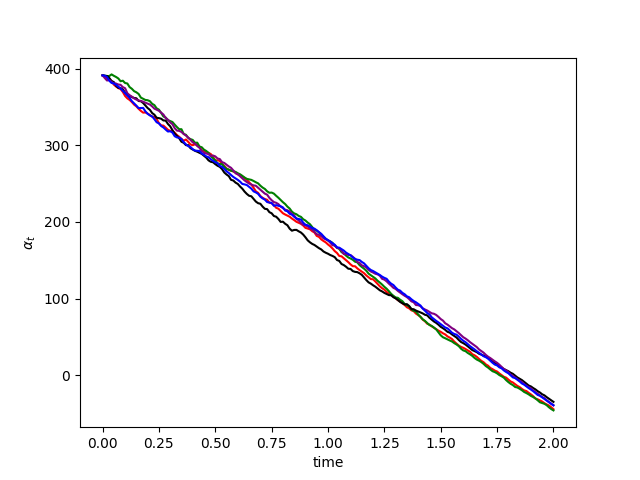}}
\caption{$\mu_0^X = 1000 MWh$, $D=1500MWh$, $\sigma^0  = 100$, $T=2$, $r = 2$.}
\label{Exam14}
\end{figure}

Regarding Figures \ref{Exam002}-\ref{Exam14}, larger $r$ corresponds to a steeper price--capacity curve, which makes investment more sensitive to capacity deviations. This increases the speed at which the market resolves initial shortages, but at the risk of stronger over- and undershooting if parameters change or if the common noise shock is large. Moreover, under risk neutrality and quadratic costs, volatility in common shocks affects the dispersion of capacity paths but not the direction of capacity trends.

As for the case of excess supply in Figure \ref{Exam01} and Figure \ref{Exam03}, since the cost of producing solar power is relatively cheap and continues to decline due to technology advancement, the expected profit remains positive; at around $t=0.5$ for $T =1$ ($t = 1.5$ for $T = 2$), the installation cost exceeds the expected profit and thereby, no new capacity is installed afterwards. Overall, during this period, the solar energy generation capacity remains fairly flat under the current market mechanism.

\begin{figure}[H]
\centering
\subfloat[$\mu^X$ (MWh)]{\includegraphics[width=0.25\textwidth]{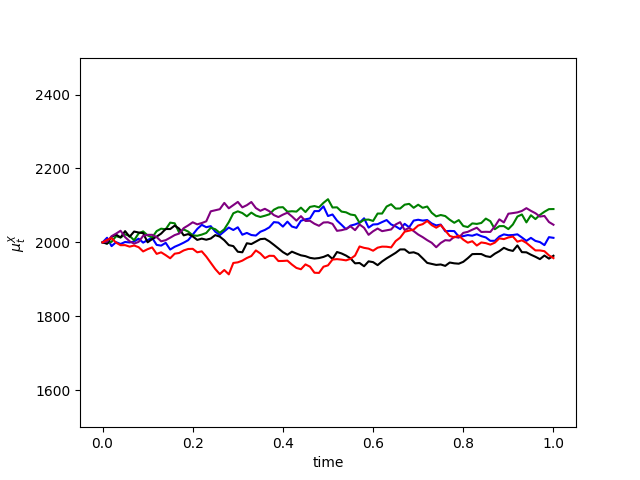}}
\subfloat[price function  in $\$ \slash MWh$]{\includegraphics[width=0.25\textwidth]{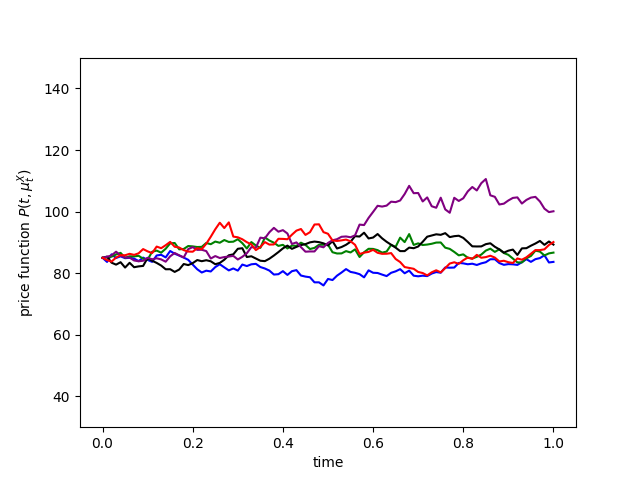}}
\subfloat[$\mu^Y$ in $\$ \slash MWh$]{\includegraphics[width=0.25\textwidth]{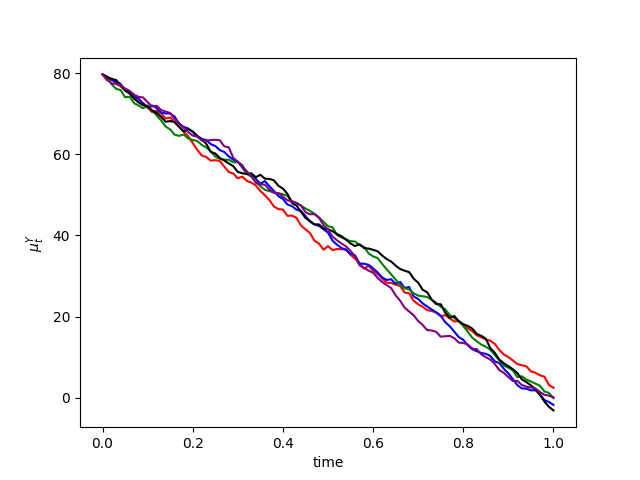}}
\subfloat[$\hat{\alpha}$ (MWh)]{\includegraphics[width=0.25\textwidth]{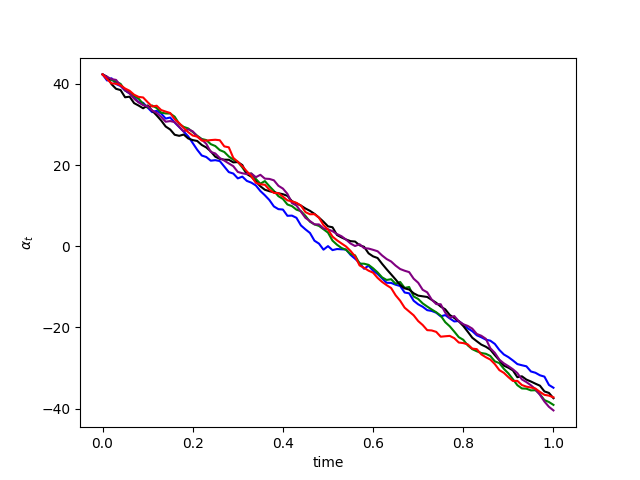}}
\caption{$\mu_0^X = 2000 MWh$, $D = 1500 MWh$, $\sigma^0 = 100$, $T=1$, $r=1$.}
\label{Exam01}
\end{figure}

\begin{figure}[H]
\centering
\subfloat[$\mu^X$ (MWh)]{\includegraphics[width=0.25\textwidth]{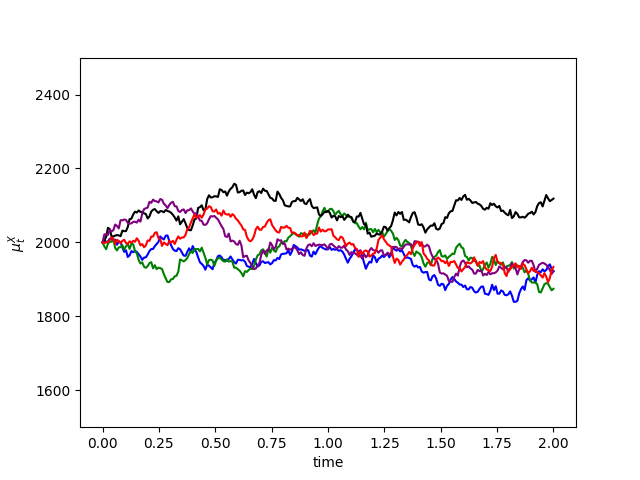}}
\subfloat[price function in $\$ \slash MWh$]{\includegraphics[width=0.25\textwidth]{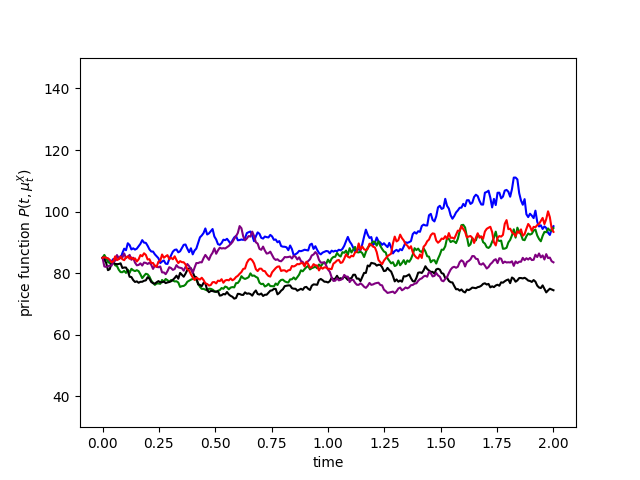}}
\subfloat[$\mu^Y$ in $\$ \slash MWh$]{\includegraphics[width=0.25\textwidth]{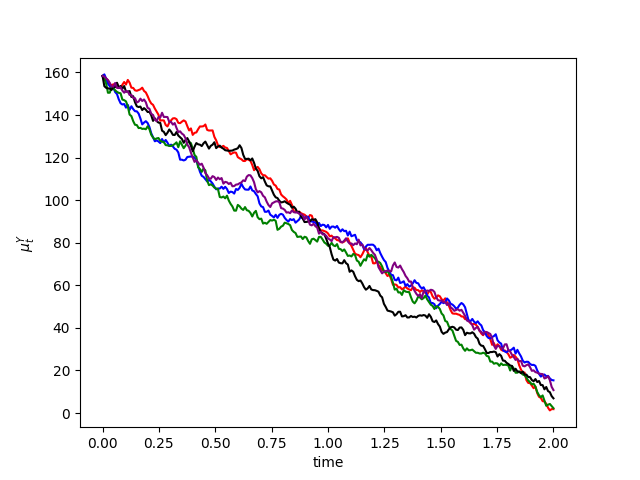}}
\subfloat[$\hat{\alpha}$ (MWh)]{\includegraphics[width=0.25\textwidth]{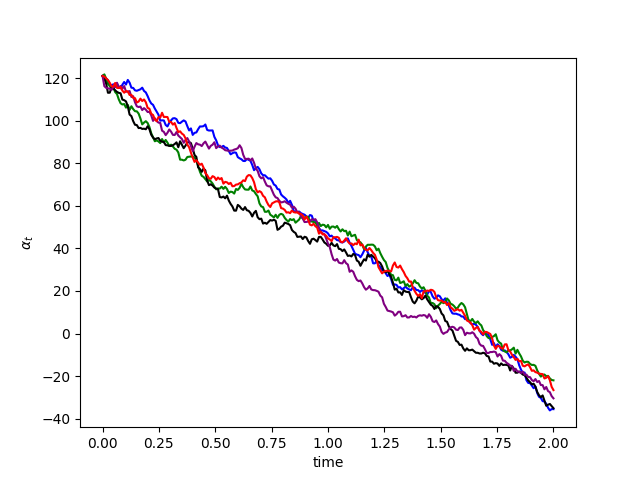}}
\caption{$\mu_0^X = 2000 MWh$, $D = 1500 MWh$, $\sigma^0 = 100$, $T=2$, $r=1$.}
\label{Exam03}
\end{figure}

With $\sigma^0$ being relatively small, we see a slight growth in the generation capacity which leads to overproduction, see Figure \ref{Exam32}; whereas, if the price function is more sensitive to the marginal capacity, then the market will slowly resolve the tension from overproducing renewable energy, see Figure \ref{Exam31}.

\begin{figure}[H]
\centering
\subfloat[$\mu^X$ (MWh)]{\includegraphics[width=0.25\textwidth]{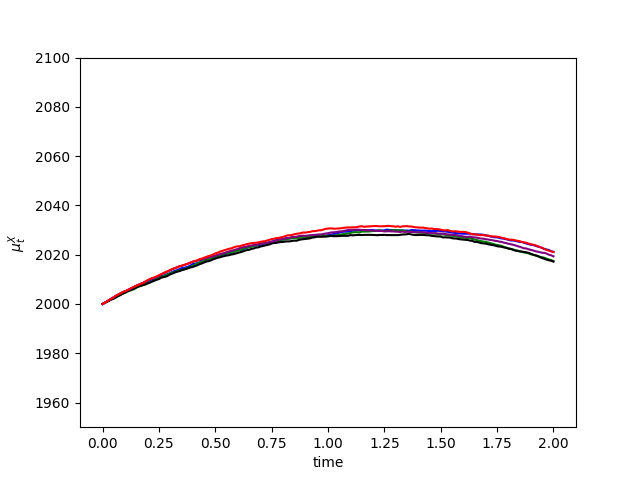}}
\subfloat[price function in $\$ \slash MWh$]{\includegraphics[width=0.25\textwidth]{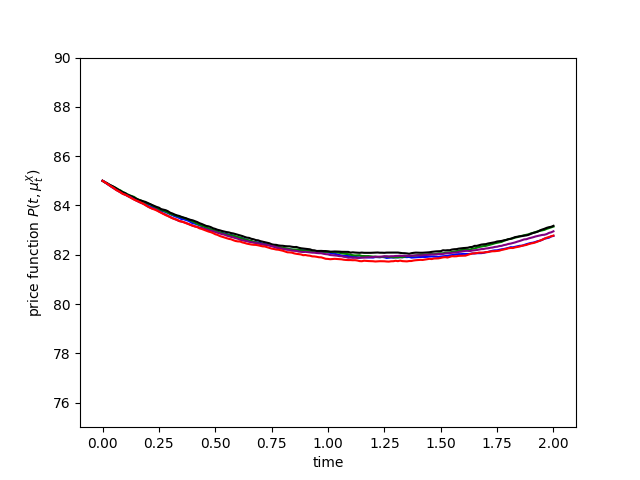}}
\subfloat[$\mu^Y$ in $\$ \slash MWh$]{\includegraphics[width=0.25\textwidth]{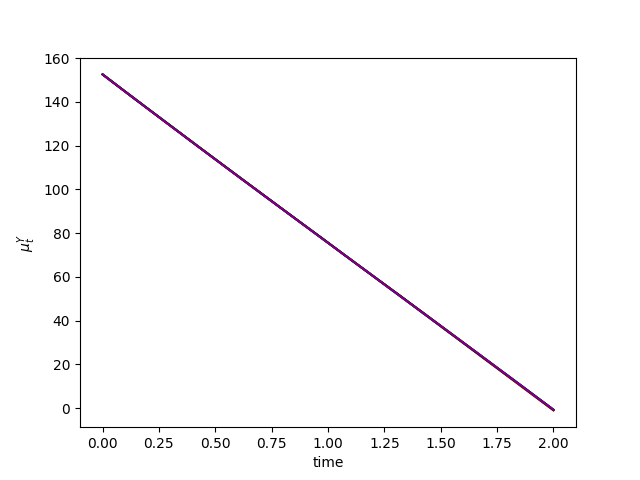}}
\subfloat[$\hat{\alpha}$ (MWh)]{\includegraphics[width=0.25\textwidth]{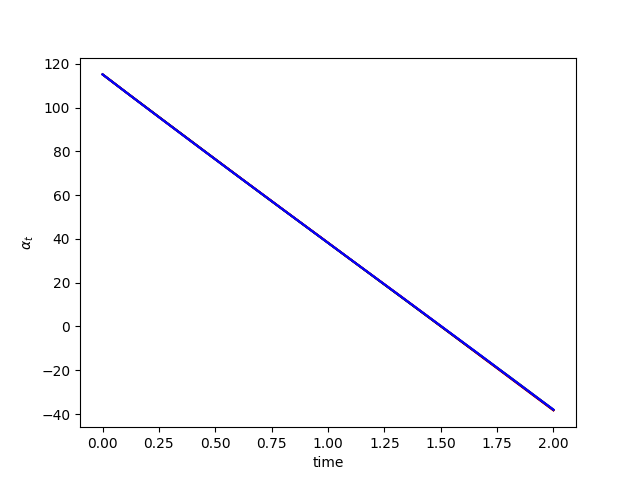}}
\caption{$\mu_0^X = 2000 MWh$, $D = 1500 MWh$, $\sigma^0 = 1$, $T=2$, $r=1$.}
\label{Exam32}
\end{figure}

\begin{figure}[H]
\centering
\subfloat[$\mu^X$ (MWh)]{\includegraphics[width=0.25\textwidth]{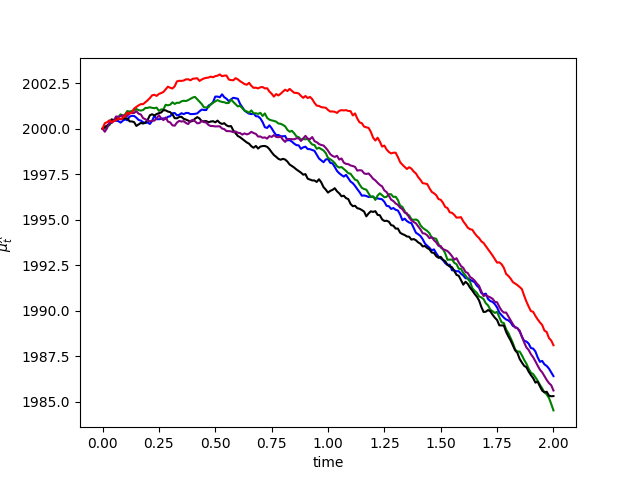}}
\subfloat[price function in $\$ \slash MWh$]{\includegraphics[width=0.25\textwidth]{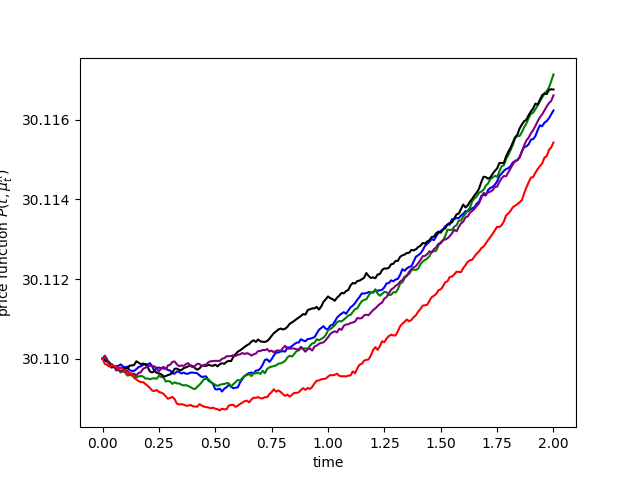}}
\subfloat[$\mu^Y$ in $\$ \slash MWh$]{\includegraphics[width=0.25\textwidth]{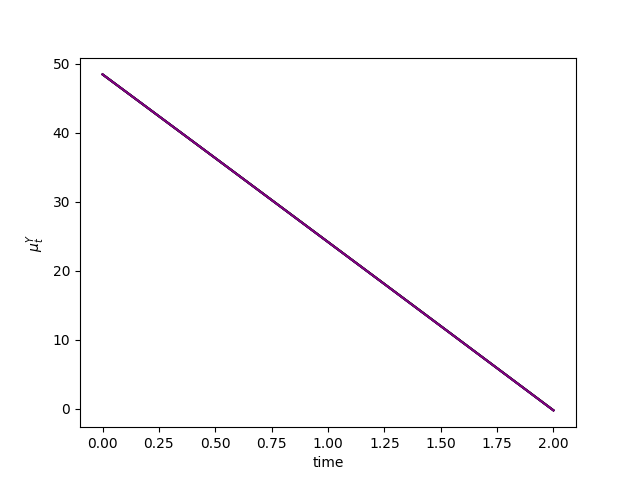}}
\subfloat[$\hat{\alpha}$ (MWh)]{\includegraphics[width=0.25\textwidth]{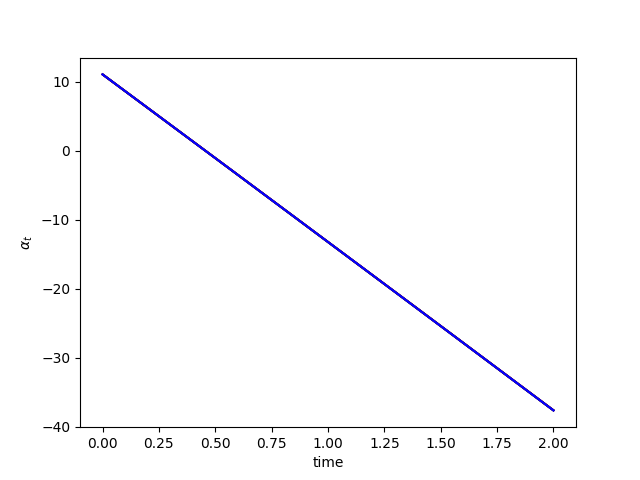}}
\caption{$\mu_0^X = 2000 MWh$, $D = 1500 MWh$, $\sigma^0 = 1$, $T=2$, $r=2$.}
\label{Exam31}
\end{figure}
Finally, we stress that our plots show periods where $\hat \alpha_t$ becomes negative, that would correspond to active decommissioning. The model then captures that when revenues are too low, it is optimal to shut down capacity even before natural degradation. The same happens in the next example.
\end{exam}

\begin{exam}[Solar Photovoltaic Technology Continued] \label{E2C}
Following \cite[Page 290]{B2002}, We extend the price function defined in \cite[Page 698]{ABBC2023} such that
\begin{align}
P(\mu_t^X)=
\begin{cases}
p_0 + \frac{p_1}{(\mu_t^X+\epsilon_1)^r}, & \mu_t^X + \epsilon_1 \geq \epsilon_2,\\
p_0 + \frac{p_1}{\epsilon_2^{r}}, &\mu_t^X + \epsilon_1 < \epsilon_2.
\end{cases} \label{NLPFs1}
\end{align}
where $p_0 = 30 \$ \slash MWh$, $p_1 = 405000 \$$, $\epsilon_1 = 0.0001 MWh$, $\epsilon_2 = D =1500 MWh$ such that the capped price is $300 \$ \slash MWh$.

Figures \ref{Exam12}, \ref{Exam128} and \ref{Exam001} tell the same story as Figures \ref{Exam02}, \ref{Exam04} and \ref{Exam002}, which is expected as we set the same capped price for excess demand. However, when the price function becomes more sensitive to the generation capacity rather than the marginal generation capacity as in Figure \ref{Exam007}, the overall generation capacity remains flat rather than increasing as that in Figure \ref{Exam14}.

\begin{figure}[H]
\centering
\subfloat[$\mu^X$]{\includegraphics[width=0.25\textwidth]{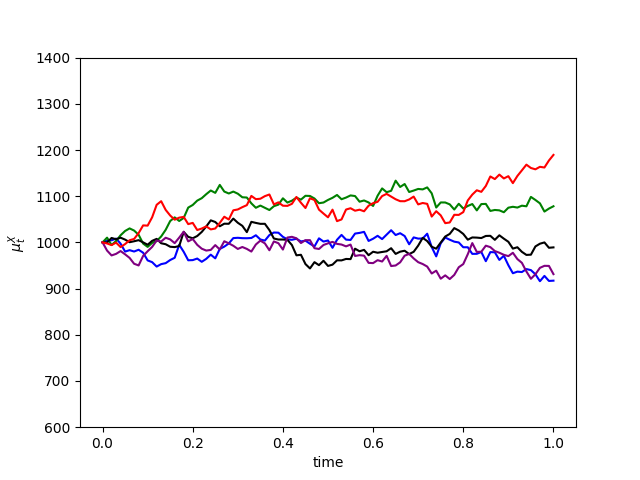}}
\subfloat[price function]{\includegraphics[width=0.25\textwidth]{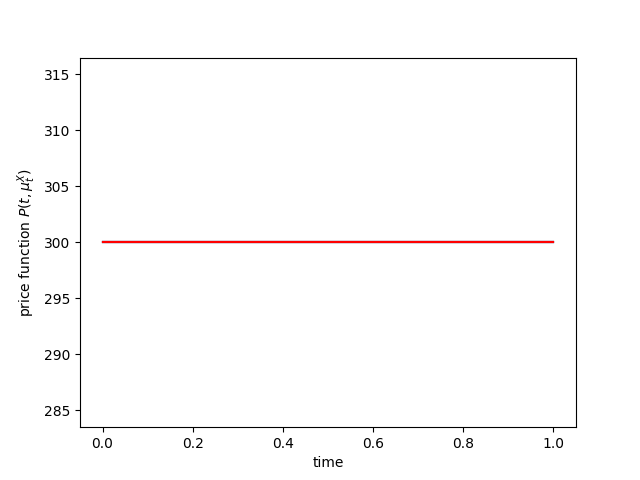}}
\subfloat[$\mu^Y$]{\includegraphics[width=0.25\textwidth]{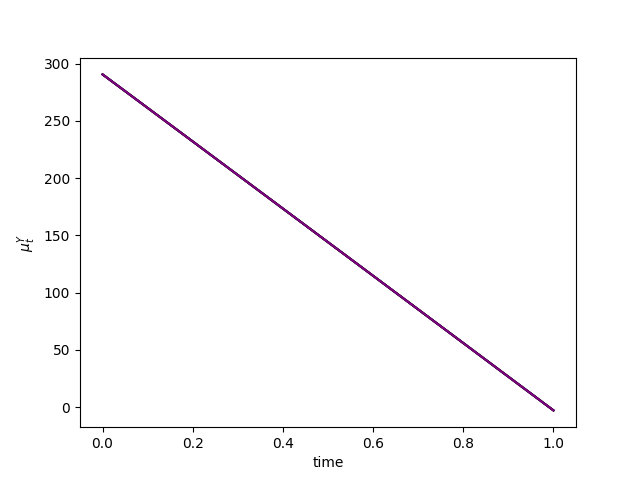}}
\subfloat[$\hat{\alpha}$]{\includegraphics[width=0.25\textwidth]{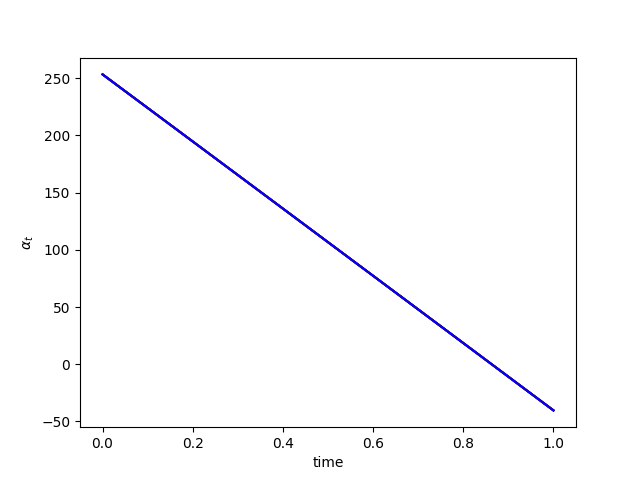}}
\caption{$\mu_0^X = 1000 MWh$, $\sigma^0  = 100$, $T=1$, $r=1$ }
\label{Exam12}
\end{figure}

\begin{figure}[H]
\centering
\subfloat[$\mu^X$]{\includegraphics[width=0.25\textwidth]{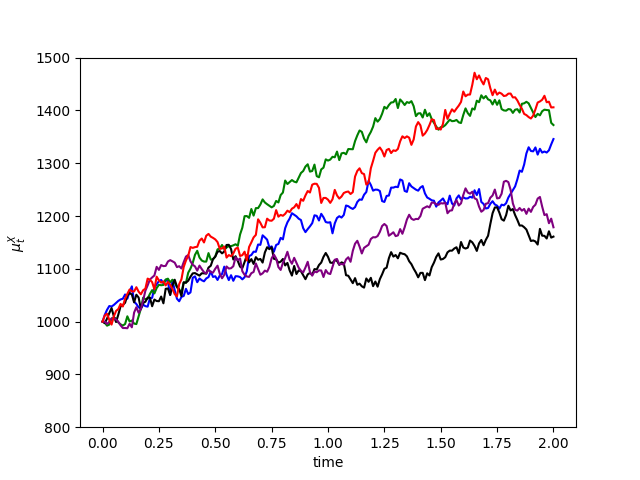}}
\subfloat[price function]{\includegraphics[width=0.25\textwidth]{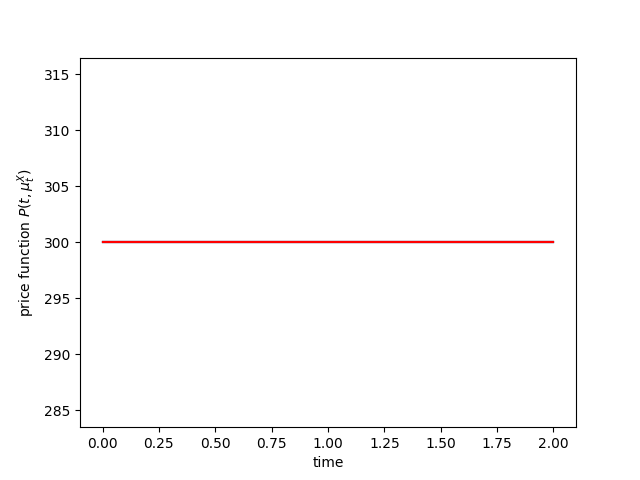}}
\subfloat[$\mu^Y$]{\includegraphics[width=0.25\textwidth]{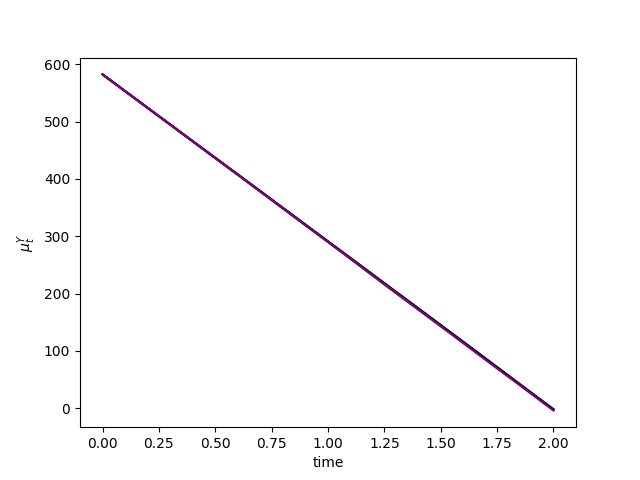}}
\subfloat[$\hat{\alpha}$]{\includegraphics[width=0.25\textwidth]{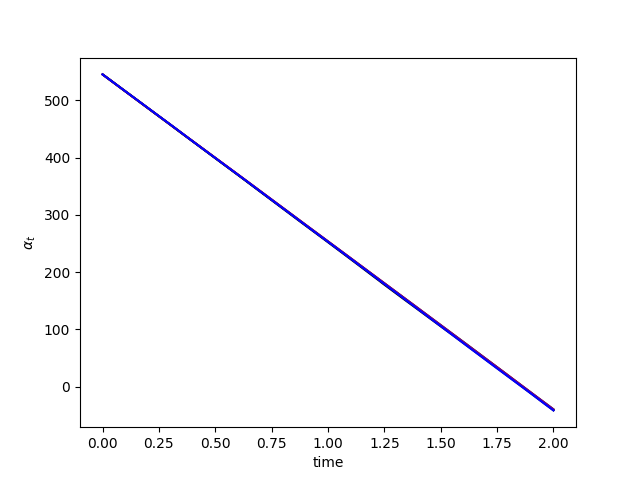}}
\caption{$\mu_0^X = 1000 MWh$, $\sigma^0  = 100$, $T=2$, $r=1$ }
\label{Exam128}
\end{figure}

\begin{figure}[H]
\centering
\subfloat[$\mu^X$ (MWh)]{\includegraphics[width=0.25\textwidth]{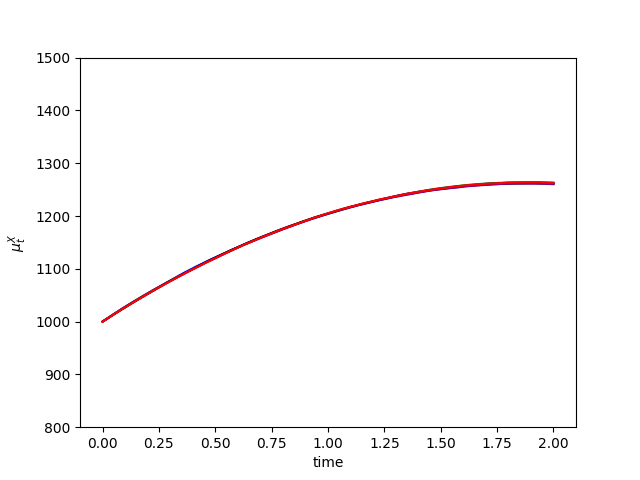}}
\subfloat[price function  in $\$ \slash MWh$]{\includegraphics[width=0.25\textwidth]{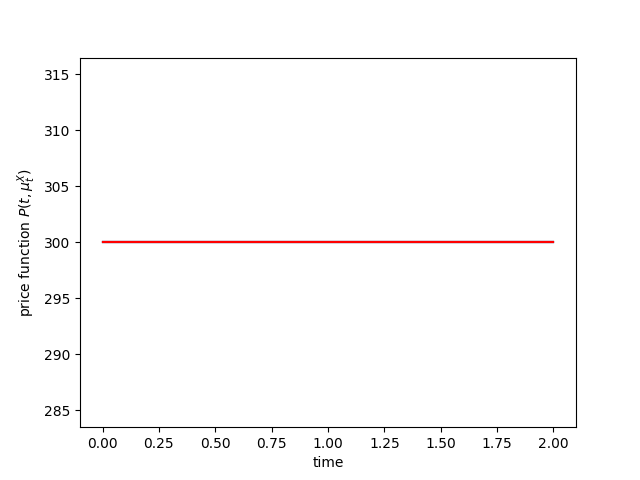}}
\subfloat[$\mu^Y$ in $\$ \slash MWh$]{\includegraphics[width=0.25\textwidth]{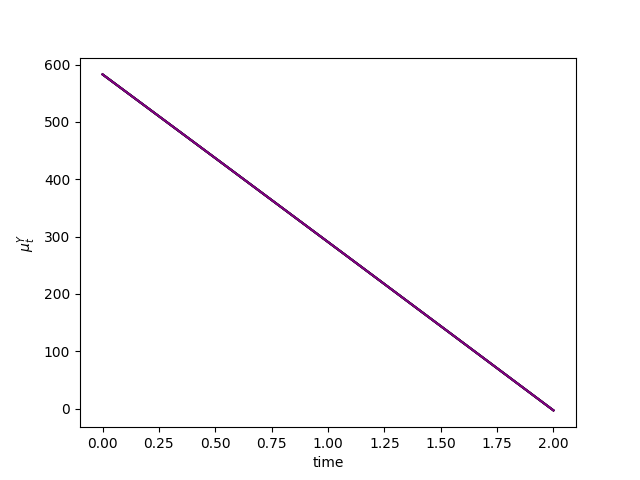}}
\subfloat[$\hat{\alpha}$ (MWh)]{\includegraphics[width=0.25\textwidth]{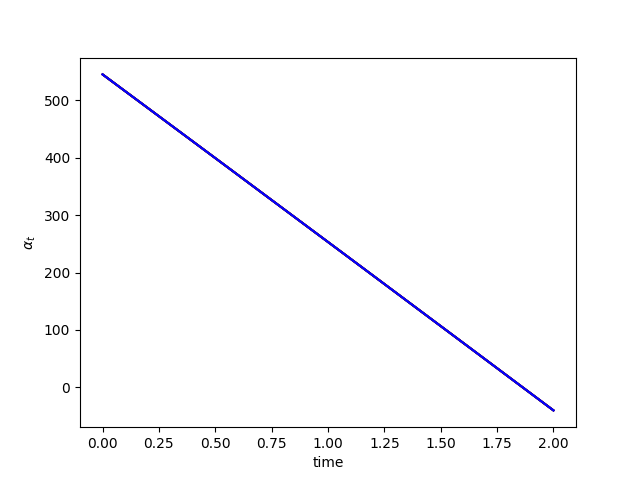}}
\caption{$\mu_0^X = 1000 MWh$, $\sigma^0 = 1$, $T=2$, $r=1$.}
\label{Exam001}
\end{figure}

\begin{figure}[H]
\centering
\subfloat[$\mu^X$ (MWh)]{\includegraphics[width=0.25\textwidth]{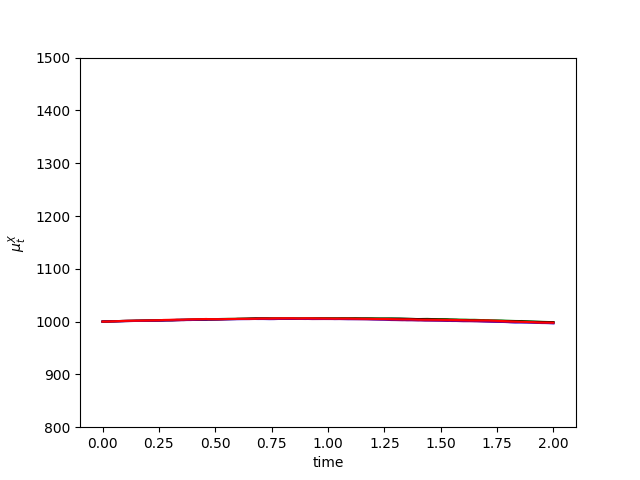}}
\subfloat[price function  in $\$ \slash MWh$]{\includegraphics[width=0.25\textwidth]{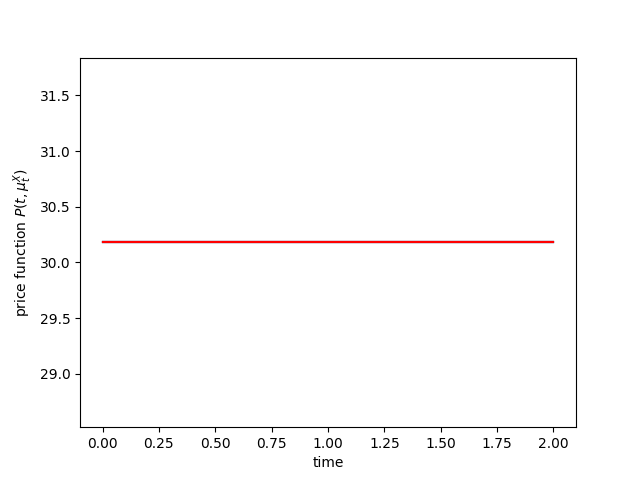}}
\subfloat[$\mu^Y$ in $\$ \slash MWh$]{\includegraphics[width=0.25\textwidth]{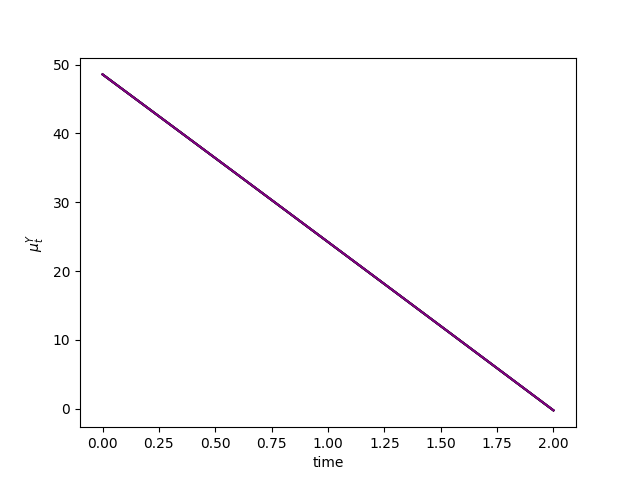}}
\subfloat[$\hat{\alpha}$ (MWh)]{\includegraphics[width=0.25\textwidth]{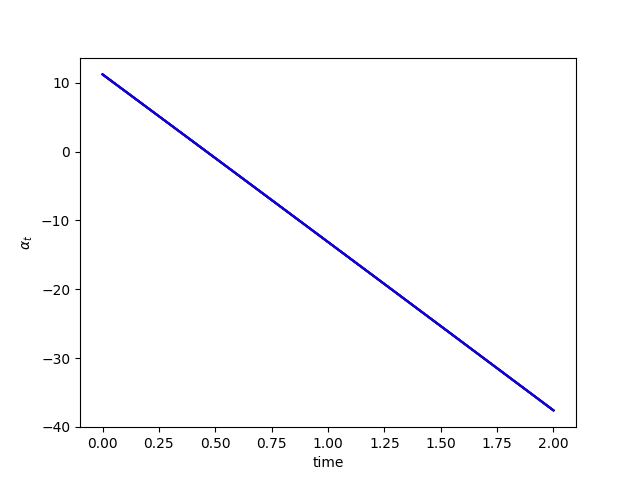}}
\caption{$\mu_0^X = 1000 MWh$, $\sigma^0 = 1$, $T=2$, $r=2$.}
\label{Exam007}
\end{figure}

With excess supply and $r=1$, the current market mechanism not only fails to adjust the energy supply and demand but also intensify the situation, see Figures \ref{Exam11}, \ref{Exam13} and \ref{Exam1202}. However, if the price function is more sensitive to the generation capacity, we see a decline tendency in the development of new energy generation capacity in Figure \ref{Exam1201}.

\begin{figure}[H]
\centering
\subfloat[$\mu^X$]{\includegraphics[width=0.25\textwidth]{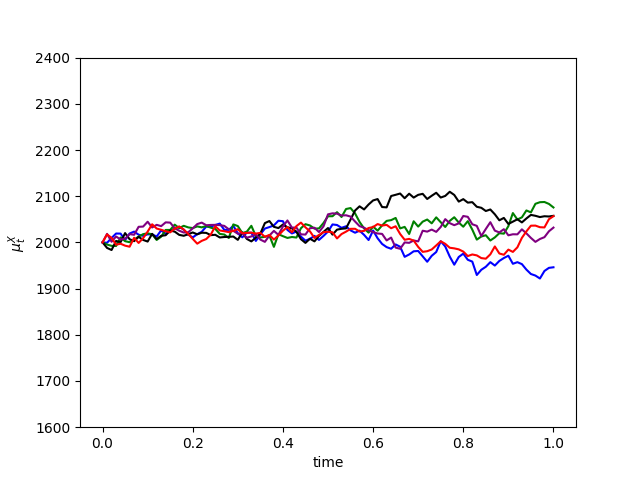}}
\subfloat[price function]{\includegraphics[width=0.25\textwidth]{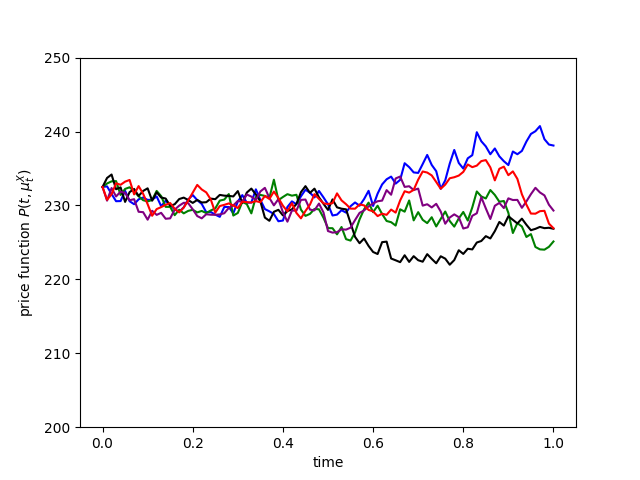}}
\subfloat[$\mu^Y$]{\includegraphics[width=0.25\textwidth]{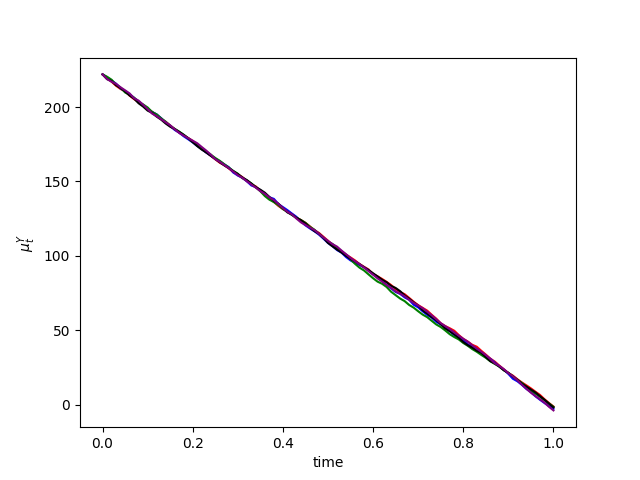}}
\subfloat[$\hat{\alpha}$]{\includegraphics[width=0.25\textwidth]{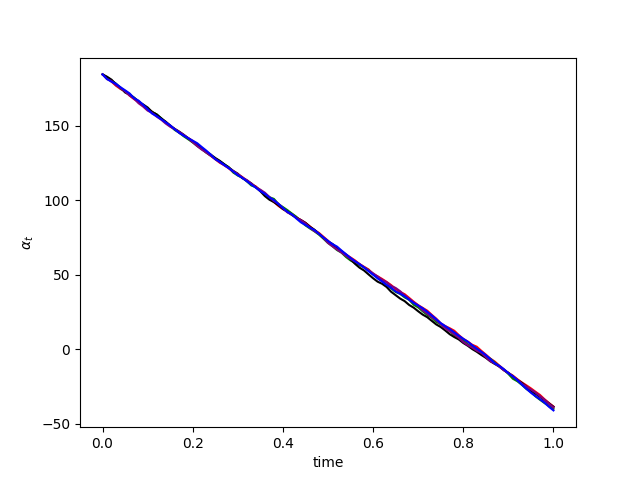}}
\caption{$\mu_0^X = 2000 MWh$, $\sigma^0  = 100$, $T=1$ and $r=1$.}
\label{Exam11}
\end{figure}

\begin{figure}[H]
\centering
\subfloat[$\mu^X$]{\includegraphics[width=0.25\textwidth]{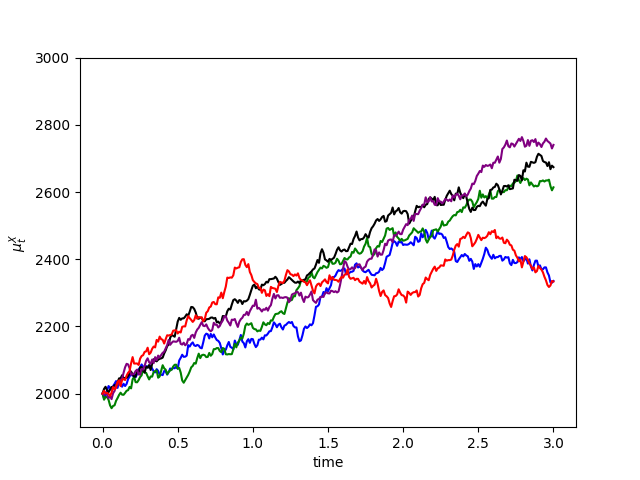}}
\subfloat[price function]{\includegraphics[width=0.25\textwidth]{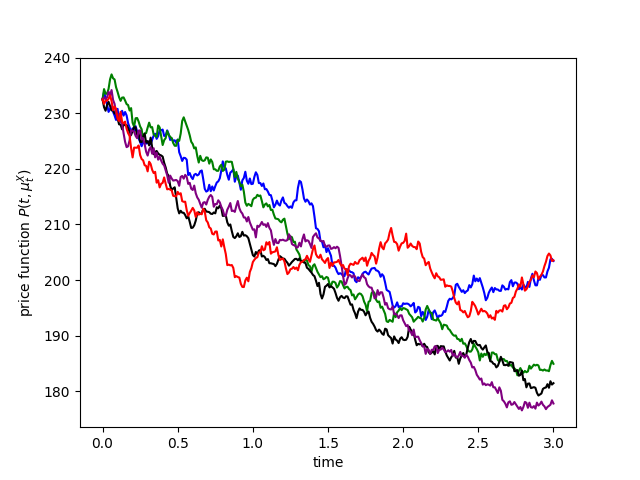}}
\subfloat[$\mu^Y$]{\includegraphics[width=0.25\textwidth]{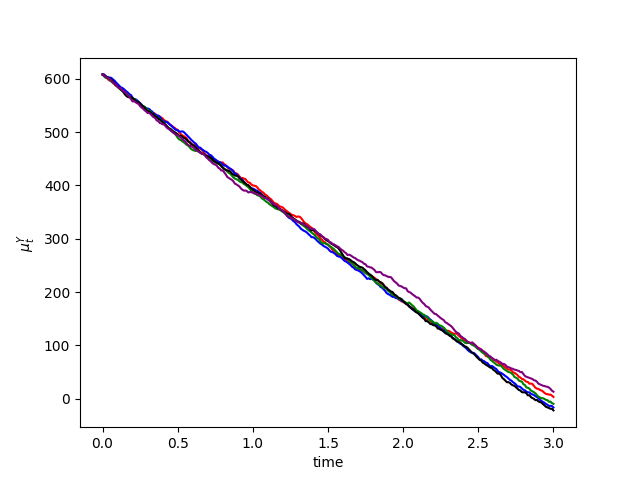}}
\subfloat[$\hat{\alpha}$]{\includegraphics[width=0.25\textwidth]{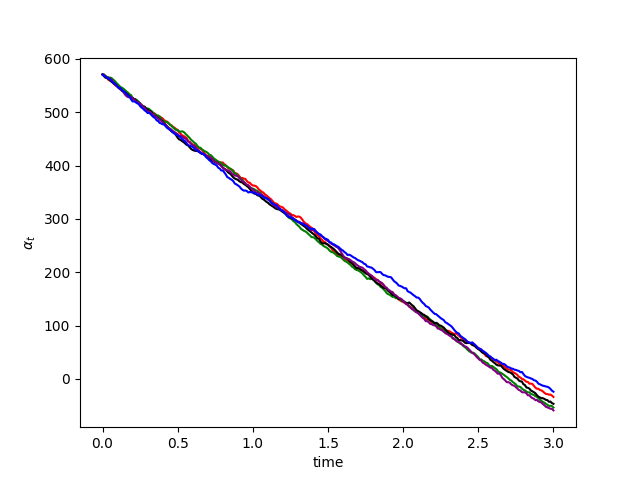}}
\caption{$\mu_0^X = 2000 MWh$, $\sigma^0  = 100$,  $T=2$, $r=1$.}
\label{Exam13}
\end{figure}
\end{exam}

\begin{figure}[H]
\centering
\subfloat[$\mu^X$]{\includegraphics[width=0.25\textwidth]{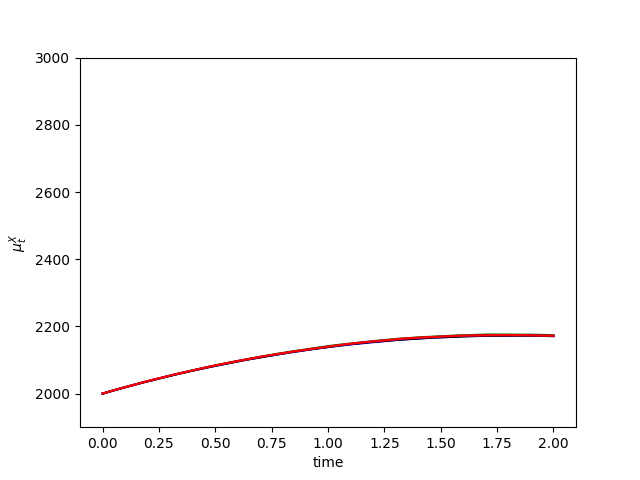}}
\subfloat[price function]{\includegraphics[width=0.25\textwidth]{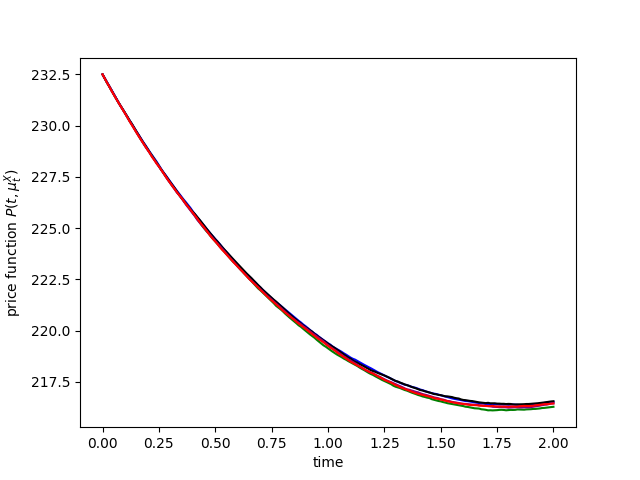}}
\subfloat[$\mu^Y$]{\includegraphics[width=0.25\textwidth]{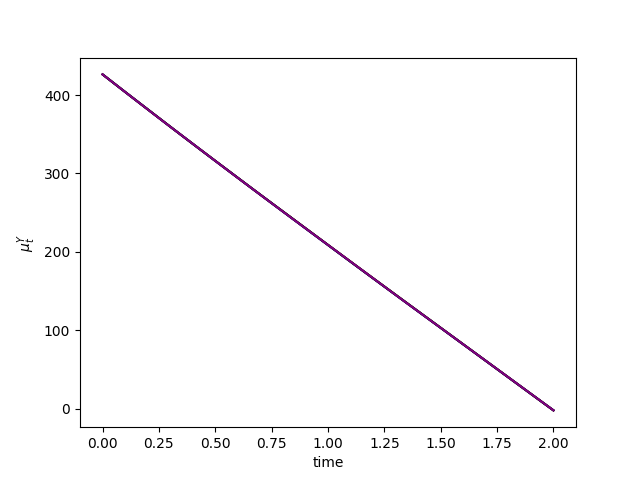}}
\subfloat[$\hat{\alpha}$]{\includegraphics[width=0.25\textwidth]{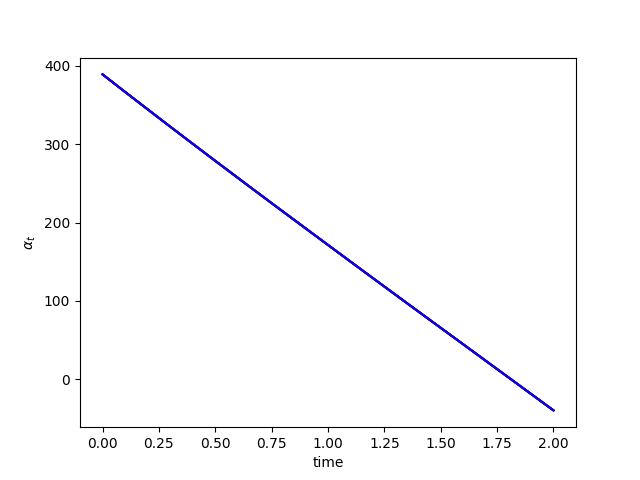}}
\caption{$\mu_0^X = 2000 MWh$, $\sigma^0  = 1$, $T=2$, $r=1$ }
\label{Exam1202}
\end{figure}

\begin{figure}[H]
\centering
\subfloat[$\mu^X$]{\includegraphics[width=0.25\textwidth]{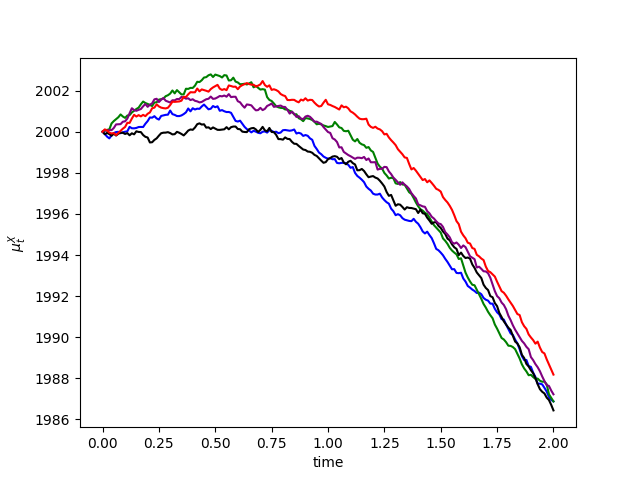}}
\subfloat[price function]{\includegraphics[width=0.25\textwidth]{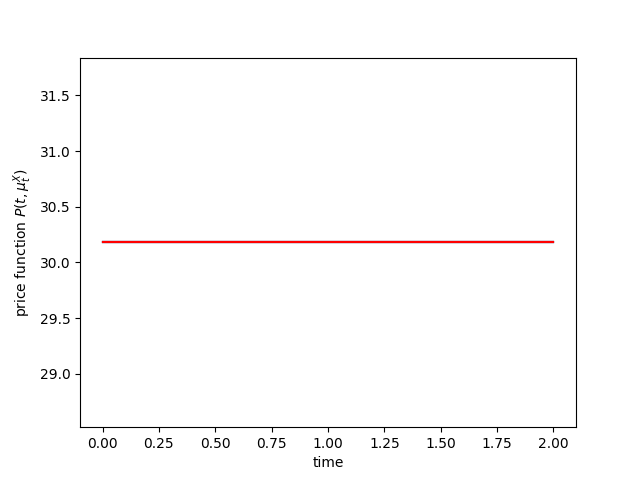}}
\subfloat[$\mu^Y$]{\includegraphics[width=0.25\textwidth]{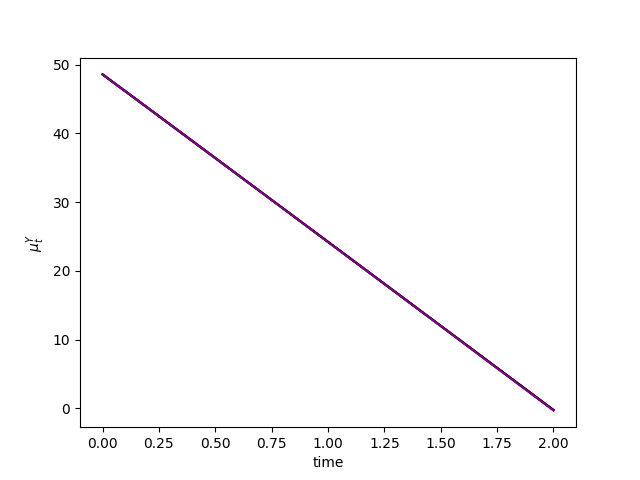}}
\subfloat[$\hat{\alpha}$]{\includegraphics[width=0.25\textwidth]{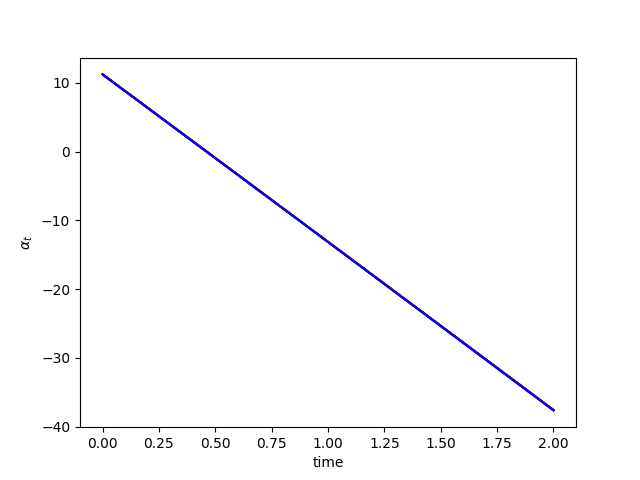}}
\caption{$\mu_0^X = 2000 MWh$, $\sigma^0  = 1$, $T=2$, $r=2$ }
\label{Exam1201}
\end{figure}

\section{A Stackelberg Mean Field Game} 
\subsection{The Model} \label{ASMFG}
In this section, we consider the social planner's problem, which consists of optimising the installation subsidies in order to balance the energy generation capacity $\mu_t^X$ and the public demand baseline at time $t$. The demand process $D_t$ evolves according to
\begin{align}
&dD_t = a (b(t) - D_t) dt, \qquad D_0 > 0,\label{Demand}
\end{align}
for given strictly positive constants $a$ and 
\[b(t) = b_0 + b_1 \cos(2\pi t - b_2) - 2\pi\sin(2\pi t - b_2), \]
which captures seasonal effects as in \cite{ACL2013}. 

In what follows, the social planner subsidises installation and is indifferent as to whether the subsidy is paid at construction or during operation of the power plant, see \cite{CLP2016}; additionally, the producer is aware of the subsidy level before installing new energy capacity. Let a constant $S\geq 0 $ and $v\in\mathcal{H}_{\mathbb{F}^0}^2([0, T];[-S, S])$ denote installation subsidies (per unit of new  installed capacity) provided by the social planner. The upper bound of the subsidies prevents producers from earning free money by installing an infinite amount of new capacity, see \cite[Page 706]{ABBC2023}, while the negative lower bound represents taxation designed to discourage the development of new capacities. 

Given a  subsidy policy $v$ announced by the government, the producer maximises the modified running profit
\begin{align}
f(t, x,  \mu, \alpha; v) &=   x \left( P(t, \mu) - c_p) - (c_i - v) \alpha - c_a \alpha^2 \right). \label{ASGRC}
\end{align}
Let $(\hat{\alpha}, {\mu}^X)$ be the unique mean field equilibrium characterised in the previous sections corresponding to \eqref{ASGRC}, which depends on the social planner's controls $v$. That is, for a given admissible control $v$, we know from Section \ref{SMP} that the producer's optimal control is 
\begin{align}
\hat{\alpha}_t = \frac{1}{2c_a}\left( \hat{Y}_t - c_i + v_t \right),\label{TCOPIS}
\end{align}
and the associated FBSDE system becomes
\begin{align}
d\hat{X}_t & = \left(-\delta \hat{X}_t +  \frac{1}{2c_a}\left( \hat{Y}_t - c_i + v_t \right) \right) dt + \sigma dW_t + \sigma^0 dW_t^0,\qquad\,\,\, \hat{X}_0=\xi_0, \label{FSDES3}\\
d\hat{Y}_t & = \left( \delta \hat{Y}_t + c_p - P(t, \mu_t^X) \right)dt  + \hat{Z}_t dW_t + \hat{Z}_t^0dW_t^0,\qquad\qquad\qquad \hat{Y}_T=0.\label{BSDES3}
\end{align}

\begin{remark}
In this model, the social planner does not directly subsidise production costs such that no link between prices and production costs is severed, which could otherwise result in an inefficient allocation of resources in a competitive market. The taxation is to prevent the overproduction of the subsidised good caused by subsidies, since production and consumption might be expanded beyond the point where the marginal social benefit of consuming the good is equal to or exceeds the marginal social costs of production. See \cite{SC1999}.
\end{remark}

The social planner anticipates the optimal control applied by the producer in response to her control $v$ and aims to minimise the objective functional by solving 
\begin{align}
&\inf_{v \in  \mathcal{H}_{\mathbb{F}^0}^2([0, T]; [-S, S])} \underbrace{ \mathbb{E} \left[ \int_0^T \left(\lambda_d(D_t - \mu_t^X)^2 +  \hat{\alpha}_t v_t  \right) dt  \right]}_{J(0, \mu_0^X, v)}, \label{SPOCP}
\end{align}
where $\lambda_d>0$ converts the generation marginal capacity into capital loss, $\hat{\alpha_t} v_t$ is the cost of subsidising the total amount of new energy capacity installation; then, by inserting \eqref{TCOPIS} into \eqref{SPOCP}, 
\begin{align}
\inf_{v \in \mathcal{H}_{\mathbb{F}^0}^2([0, T]; [-S, S])} \mathbb{E} \left[ \int_0^T \left(\lambda_d (D_t - \mu_t^X)^2 +    \frac{v_t }{2c_a}\left({\mu}_t^Y - c_i + v_t \right)  \right) dt  \right], \label{OPre}
\end{align}
where 
\begin{align}
d\mu_t^X & =  \left(-\delta\mu_t^X +  \frac{1}{2c_a}\left( \mu_t^Y- c_i+ v_t \right) \right) dt  + \sigma^0 dW_t^0, \qquad\,\,\,\,\,\,\,\mu_0^X=\mathbb{E}[\xi_0], \label{FSDEs3c}\\
d\mu_t^Y & = \left( \delta \mu_t^Y + c_p - P(t, \mu_t^X) \right) dt  + \hat{Z}_t^0dW_t^0, \qquad\qquad\qquad \mu_T^Y=0. \label{BSDEs3c}
\end{align}

\begin{remark}
In \eqref{SPOCP}, given that the state processes $(\mu^X, \mu^Y, \hat{Z}^0)$ are $\mathbb{F}^0$-progressively measurable, it is only natural for us to look for the $\mathbb{F}^0$-progressively measurable control.
\end{remark}

\begin{remark}
Overproducing renewable energy could cause capital loss. For instance: (i) in California, it is reported that there is a steady growth in renewable curtailment; (ii) in North Germany, huge amounts of wind energy is produced but most demand is in the south, it is often necessary for the state to curtail wind turbines and ask neighbouring country to absorb excess power which created international tensions; (iii) in South Australia, there is extremely high rooftop solar penetration and uncontrolled solar power makes the grid fragile without sufficient dispatchable resources and inertia.
\end{remark}

For notational convenience we set, throughout the remainder of this section,
\begin{align*}
&g(t, x, y, v) = \lambda_d (d - x)^2 + \frac{1}{2c_a} ( y v  - c_i v + v^2),  \\
& l(t, x, y, v) = -\delta x + \frac{1}{2c_a} \left( y - c_i + v \right), \\
& h(t, x, y) = \delta y +  c_p - P(t, x) .
\end{align*}
Admittedly, the stochastic maximum principle is not applicable, as the joint convexity does not hold in our case, see \cite{OS2010}. In the next section, we therefore derive the so-called \textit{extended HJB system} to characterise the solution for problem \eqref{OPre} in the sense that we first use decoupling field to characterise $\mu^Y$ as a function of $\mu^X$, that is, $\mu_t^Y = \phi(t, \mu_t^X)$ and that by the four step scheme introduced by \cite{MPY1994} and \cite{PT1999}, $\phi(t, x)$ satisfies quasilinear parabolic partial differential equation; after which, we invoke the dynamic programming principle to derive standard HJB equation for the value function of problem \eqref{OPre}. 

\subsection{The Extended HJB System} \label{TEHJBS}

We begin by restricting ourselves to the following admissible Markov control set, see also \cite[Page 8]{DL2024} for the similar treatment: 
\begin{align*}
\mathbb{V} & = \{ v \in \mathcal{C}^0([0, T]\times\mathbb{R}; [-S, S]): \\
&\qquad \text{ for every $t\in[0, T]$, $v(t, \cdot)$ is $L_{c}$-Lipschitz continuous}\}.
\end{align*}
where $\mathcal{C}^0([0, T]\times \mathbb{R}; [-S, S])$ denotes the space of continuous functions from $[0, T]\times \mathbb{R}$ into $[-S, S]$. More precisely, hereafter, instead of problem \eqref{OPre}, we only investigate the following problem:
\begin{align}
\inf_{v \in \mathbb{V}} \underbrace{\mathbb{E} \left[ \int_0^T \left(\lambda_d (D_t - \mu_t^X)^2 +    \frac{v(t, \mu_t^X) }{2c_a}\left({\mu}_t^Y - c_i + v(t, \mu_t^X) \right)  \right) dt  \right]}_{J(0, \mu_0^X; v)} \label{OPree}.
\end{align}
The reason is that, in order to apply the dynamic programming principle, we must ensure the decoupling field exists uniquely for the FBSDE and for all the feedback control law in $\mathbb{V}$, such decoupling field is warranted.

\begin{Cor}\label{MARKOVBSDE}
For $v\in\mathbb{V}$, we know that the FBSDEs \eqref{FSDEs3c}-\eqref{BSDEs3c} admit a unique solution $(\mu^X, \mu^Y, \hat{Z}^0) \in  \mathcal{L}_{\mathbb{F}^0}^2([0, T]; \mathbb{R})\times\mathcal{L}_{\mathbb{F}^0}^2([0, T]; \mathbb{R})\times\mathcal{H}_{\mathbb{F}^0}^2([0, T]; \mathbb{R})$ and it holds for all $t\in[0, T]$ that 
\begin{align}
\mu_t^Y = \phi(t, \mu_t^X).
\end{align}
where the bounded function $\phi$ is Lipschitz continuous in $x$, uniformly in $t$; and it is a unique viscosity solution of the following backward quasilinear second-order parabolic PDE:
\begin{align}
&\frac{\partial \phi}{\partial t}(t, x) + \frac{\partial \phi}{\partial x}(t, x) \left( -\delta x + \frac{1}{2c_a} \phi(t, x) - \frac{c_i}{2c_a} + \frac{1}{2c_a} v(t, x) \right)  \nonumber\\ 
& \qquad+ \frac{(\sigma^0)^2}{2} \frac{\partial^2 \phi}{\partial x^2}(t, x) -\delta \phi(t, x) - c_p  + P(t, x) = 0, \qquad  \phi(T, x) = 0. \label{PDEFMUY}
\end{align}
\end{Cor}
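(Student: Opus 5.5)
Fix $v\in\mathbb{V}$ and view \eqref{FSDEs3c}--\eqref{BSDEs3c} as a Markovian, fully coupled FBSDE driven by the single Brownian motion $W^0$. The forward drift is $b(t,x,y)=l(t,x,y,v(t,x))$, the forward diffusion is the constant $\sigma^0\neq 0$, the backward driver is $h(t,x,y)$ (no dependence on $z$), and the terminal condition is $0$. The plan is to place this system in the framework of \cite{PT1999}, exactly as in Proposition \ref{GWPOMFG}, by checking the hypotheses of their Theorem 5.1 (existence and uniqueness for coupled FBSDEs) and of their Corollary 4.1 / the four step scheme of \cite{MPY1994}. These results give the decoupling field $\phi$ and its characterisation as a viscosity solution of the associated quasilinear PDE.

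First I would verify the structural conditions.
\begin{itemize}
\item $b$ is Lipschitz in $(x,y)$, uniformly in $t$, with constant $\max\{\delta+L_c/(2c_a),\,1/(2c_a)\}$, since $v(t,\cdot)$ is $L_c$-Lipschitz. It also grows at most linearly, because $|v|\le S$.
\item $h$ is Lipschitz in $(x,y)$ with constants $L$ and $\delta$ by Assumption \ref{PILC}.
\item The diffusion is constant and nondegenerate.
\item The monotonicity condition of \cite{PT1999} is checked through the product of increments $\langle A(u)-A(u'),u-u'\rangle$ with $u=(x,y)$. It involves $-(P(t,x)-P(t,x'))(x-x')\ge 0$, by the decreasing property of $P$, together with the linear parts $-\delta(x-x')^2$ and $\frac{1}{2c_a}(y-y')^2$. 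The Lipschitz perturbation coming from $v$ is handled with a suitable weight $\beta$ in the coupling, or alternatively by the nondegenerate-diffusion route of \cite{MPY1994}.
\end{itemize}
These checks deliver the unique solution $(\mu^X,\mu^Y,\hat Z^0)$ in the stated spaces. Measurability with respect to $\mathbb{F}^0$ is immediate because only $W^0$ appears.

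Next, I would define $\phi(t,x):=Y_t^{t,x}$, the initial value of the backward component started from $x$ at time $t$. By the Markov property and uniqueness, $\mu^Y_t=\phi(t,\mu^X_t)$. Lipschitz continuity in $x$, uniformly in $t$, comes from the standard stability estimate
\[
|Y^{t,x}_t-Y^{t,x'}_t|\le C|x-x'|,
\]
which holds whenever the FBSDE is solvable on $[t,T]$ with constants independent of $t$. Boundedness needs more than Lipschitz data, since $P$ is only assumed Lipschitz. For it I would use the decoupled representation
\[
\phi(t,x)=\mathbb{E}\!\left[\int_t^T e^{-\delta(s-t)}\bigl(P(s,\mu^X_s)-c_p\bigr)ds\right],
\]
which is bounded as soon as $P$ is bounded. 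That holds in both price models \eqref{PF1} and \eqref{NLPFs1}, and I would add it as a mild standing assumption if it is needed in general.

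Finally, the viscosity property and uniqueness for \eqref{PDEFMUY} follow from the nonlinear Feynman--Kac formula of \cite{PT1999}. Continuity of $\phi$ plus the DPP-type flow identity $\phi(t,x)=\mathbb{E}[\int_t^{t+h} h(s,X_s,\phi(s,X_s))ds+\phi(t+h,X_{t+h})]$ gives the sub- and supersolution properties via test functions. Uniqueness comes from the comparison principle in the class of Lipschitz (or bounded continuous) functions, which requires only the continuity in $t$ and Lipschitz continuity in $x$ of the coefficients; this is why $v$ was restricted to $\mathbb{V}$.

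The main obstacle is the global well-posedness of the fully coupled system on an arbitrary horizon $T$. The coupling $y\mapsto y/(2c_a)$ in the forward drift is not small, and the monotonicity sign is delicate: $h$ is decreasing in $x$, while $b$ is increasing in $y$. I would first try the monotonicity approach of \cite{PT1999} with weight $\beta$ to close the a priori estimates. If that fails, I would fall back on the four step scheme of \cite{MPY1994}: nondegeneracy $\sigma^0\neq0$ and a priori gradient bounds on $\phi$ obtained from its Lipschitz estimate yield a classical solution of the mollified PDE, and passing to the limit gives the viscosity solution.
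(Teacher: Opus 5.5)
Your plan follows the paper's proof. The paper gets boundedness exactly as you do: the discounted-price representation of $\mu^Y$ plus a price cap $P_{cap}$. You are right that this goes beyond Assumption \ref{PILC}, and the paper's lower bound also uses $P\ge 0$. Everything else the paper takes from \cite[Theorem 5.1 and Corollary 4.1]{PT1999} and \cite[Lemma 3.2]{LW2014}, without checking their hypotheses.

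The step you flag as the main obstacle is a genuine gap, and neither of your suggestions closes it. That step is global solvability on $[0,T]$ once $v$ depends on $x$. Write $-d\mu^Y=f\,dt-\hat Z^0dW^0$ with $f(t,x,y)=P(t,x)-c_p-\delta y$, and pair $(-f,l)$ with $(x-x',y-y')$. The $\delta$-terms cancel, so no $-\delta(x-x')^2$ survives, and one gets
\[
-\bigl(P(t,x)-P(t,x')\bigr)(x-x')+\tfrac{1}{2c_a}(y-y')^2+\tfrac{1}{2c_a}\bigl(v(t,x)-v(t,x')\bigr)(y-y').
\]
Without the last term this is $\ge\frac{1}{2c_a}(y-y')^2$. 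That is why monotonicity works for the system of Proposition \ref{GWPOMFG}, or for $v$ independent of $x$.

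The cross term has size $\frac{L_c}{2c_a}|x-x'|\,|y-y'|$. It can only be absorbed by a coercive $|x-x'|^2$ term, i.e.\ by strong monotonicity of $P$. Assumption \ref{PILC} does not give this, and it fails for \eqref{PF1} and \eqref{NLPFs1}, which are both flat on a half-line. In dimension one a constant weight merely rescales the whole pairing, and plain Lipschitz theory only gives a short horizon.

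Your fallback is circular. The gradient bound you feed into the four step scheme is the stability estimate $|Y^{t,x}_t-Y^{t,x'}_t|\le C|x-x'|$. You only have that estimate after global solvability with $t$-independent constants.

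The missing ingredient is an a priori Lipschitz bound on the decoupling field, uniform in the horizon and in any regularisation. It comes from the signs $\partial_y l=\frac{1}{2c_a}>0$, $\partial_xP\le 0$ and $\phi(T,\cdot)=0$; this is where the monotonicity of $P$ is really used. Take mollified $v^\varepsilon,P^\varepsilon$ with the same Lipschitz constants and $P^\varepsilon$ still decreasing. Then $\psi=\partial_x\phi^\varepsilon$ solves the Riccati-type equation
\[
\partial_t\psi+l\bigl(t,x,\phi^\varepsilon,v^\varepsilon\bigr)\partial_x\psi+\tfrac{(\sigma^0)^2}{2}\partial_{xx}\psi+\tfrac{1}{2c_a}\psi^2+\Bigl(-2\delta+\tfrac{\partial_x v^\varepsilon}{2c_a}\Bigr)\psi+\partial_xP^\varepsilon=0,\qquad \psi(T,\cdot)=0.
\]
Absorb $\frac{1}{2c_a}\psi$ into the zeroth-order coefficient; Feynman--Kac and $\partial_xP^\varepsilon\le0$ then give $\psi\le0$. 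Discarding $\frac{1}{2c_a}\psi^2\ge0$ then gives $\psi(t,x)\ge -L\int_t^T e^{L_c(s-t)/(2c_a)}\,ds$.

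With this bound the four step scheme (using $\sigma^0\neq0$) runs on all of $[0,T]$. Equivalently, short-time solutions can be glued backward in steps of fixed length. Letting $\varepsilon\to0$ yields existence, uniqueness and $\mu^Y_t=\phi(t,\mu^X_t)$, with $\phi$ Lipschitz (indeed nonincreasing) in $x$ uniformly in $t$. After that your viscosity and comparison arguments go through, with uniqueness stated in the class of functions Lipschitz in $x$.
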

\begin{proof}
The boundedness of function $\phi$ follows from the fact that
\begin{align*}
\underbrace{- c_p \int_t^T e^{-\delta(s-t)} ds}_{\text{$y_l$}}& \leq {\mu}_t^Y = \mathbb{E}\left[\int_t^T e^{-\delta(s-t)} \left( P(s, \mu_s^{X}) - c_p \right) ds \mathbb{|}\mathcal{F}_t \right] \leq \underbrace{ \left( P_{cap} - c_p\right)  \int_t^T e^{-\delta(s-t)} ds}_{\text{$y_u$}},
\end{align*}
where $P_{cap}$ is the capped spot price. The rest of the conclusion is immediate from \cite[Theorem 5.1 and Corollary 4.1]{PT1999}, \cite[Lemma 3.2]{LW2014}.
\end{proof}

Next, we establish the existence of the optimal control for problem \eqref{OPree} by following \cite{BGM2011}.

\begin{Lemma}\label{EOOPC}
There exists $\hat{v}\in\mathbb{V}$ such that 
\begin{align*}
J(0, \mu_0^X; \hat{v}) =  \inf_{v\in\mathbb{V}} J(0, \mu_0^X; v).
\end{align*}
\end{Lemma}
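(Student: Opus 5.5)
The plan is the direct method of the calculus of variations on the feedback class $\mathbb{V}$, as in \cite{BGM2011}. The key steps are compactness of $\mathbb{V}$, stability of the FBSDE under convergence of the feedback, and convergence of the cost along a minimising sequence.

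\textbf{Step 1: the cost is bounded below.} Every $v\in\mathbb{V}$ takes values in $[-S,S]$. By Corollary \ref{MARKOVBSDE}, $\mu^Y=\phi(t,\mu^X)$ lies between the bounds $y_l$ and $y_u$. Hence the integrand in \eqref{OPree} satisfies
\begin{align*}
\lambda_d(D_t-\mu_t^X)^2+\frac{v}{2c_a}(\mu_t^Y-c_i+v)\ \geq\ -\frac{S}{2c_a}\big(|y_l|+|y_u|+c_i\big),
\end{align*}
so $\inf_{v\in\mathbb{V}}J(0,\mu_0^X;v)$ is finite. Take a minimising sequence $(v_n)\subset\mathbb{V}$.

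\textbf{Step 2: compactness of $\mathbb{V}$.} The functions $v_n$ are uniformly bounded by $S$ and $L_c$-Lipschitz in $x$ for each $t$. They need not be equicontinuous in $t$. I would therefore extract a limit in a topology weak enough to exist but strong enough for the state equations.

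\emph{First try.} View $v_n$ as elements of $L^\infty([0,T]\times\mathbb{R})$. Extract a weak-$*$ convergent subsequence $v_n\to\hat v$. The $L_c$-Lipschitz bound in $x$ and the bound $|\hat v|\le S$ pass to the limit.

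\emph{Fallback.} If continuity in $t$ of $\hat v$ cannot be guaranteed, I would either take a time-regularised version of $\hat v$, or, following \cite{BGM2011}, argue with relaxed (measure-valued) controls. Because $v$ enters the dynamics $l$ linearly and the cost $g$ through a convex quadratic in $v$, a relaxed control can be replaced by its barycentre without increasing the cost. This keeps the minimiser in $\mathbb{V}$.

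\textbf{Step 3: stability of the FBSDE.} Let $(\mu^{X,n},\mu^{Y,n})=(\mu^{X,n},\phi_n(\cdot,\mu^{X,n}))$ solve \eqref{FSDEs3c}--\eqref{BSDEs3c} with feedback $v_n$.
\begin{itemize}
\item By Corollary \ref{MARKOVBSDE} and \cite[Lemma 3.2]{LW2014}, the decoupling fields $\phi_n$ are uniformly bounded. They are also Lipschitz in $x$ with a constant depending only on $L, L_c, \delta, c_a, T$, because the coefficients $l$ and $h$ have such uniform bounds.
\item Arzel\`a--Ascoli, together with the standard H\"older-in-time estimate for viscosity solutions of \eqref{PDEFMUY}, gives $\phi_n\to\phi$ locally uniformly along a subsequence.
\item The forward equation becomes $d\mu^{X,n}=\big(-\delta\mu^{X,n}+\tfrac{1}{2c_a}(\phi_n-c_i+v_n)(t,\mu^{X,n})\big)dt+\sigma^0dW^0$. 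It has non-degenerate noise ($\sigma^0\neq0$) and uniformly Lipschitz drift. A Gronwall estimate gives tightness of the laws of $\mu^{X,n}$.
\item Weak convergence of the drift $v_n(t,\mu^{X,n}_t)$ is handled by Girsanov's theorem. Since $\sigma^0\neq0$, the law of $\mu^{X,n}$ has a density relative to that of $\mu_0^X+\sigma^0W^0$. Weak-$*$ convergence of $v_n$ in $x$ then suffices to pass to the limit in the drift.
\item Stability of viscosity solutions \cite{PT1999} identifies $\phi$ as the decoupling field associated with $\hat v$.
\end{itemize}

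\textbf{Step 4: passing to the limit in the cost.}
\begin{itemize}
\item The term $\lambda_d(D_t-\mu^X_t)^2$ converges by uniform integrability, since second moments of $\mu^{X,n}$ are bounded uniformly in $n$.
\item The term $\tfrac{1}{2c_a}v(\mu^Y-c_i)$ is linear in $v$ with coefficient converging strongly, so it converges.
\item The term $\tfrac{1}{2c_a}v^2$ is convex in $v$, so weak lower semicontinuity applies.
\end{itemize}
Together these give $J(0,\mu_0^X;\hat v)\le\liminf_n J(0,\mu_0^X;v_n)=\inf_{v\in\mathbb{V}}J(0,\mu_0^X;v)$, which proves the claim.

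\textbf{Main obstacle.} I expect Steps 2--3 to be the hardest: the lack of time regularity in $\mathbb{V}$, and the coupling through $\phi_n$. The density/Girsanov argument in Step 3 is the first tool I would try.
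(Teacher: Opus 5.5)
\textbf{Verdict: there is a genuine gap.} Your route is the direct method applied to the feedback maps themselves, using weak-$*$ compactness in $L^\infty([0,T]\times\mathbb{R})$. This differs from the paper, which argues via tightness and a Skorokhod representation of $(\mu^{X,n},\mu^{Y,n},M^n,v_n(\cdot,\mu^{X,n}_\cdot),W^0)$ following \cite{BGM2011}. As written, your argument does not prove the lemma.

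\textbf{Main gap: the limit need not lie in $\mathbb{V}$.} A weak-$*$ limit keeps $|\hat v|\le S$ and the $L_c$-Lipschitz bound in $x$, since both are closed convex constraints. It loses continuity in $t$, and $\mathbb{V}\subset\mathcal{C}^0([0,T]\times\mathbb{R};[-S,S])$. Neither fallback repairs this:
\begin{itemize}
\item The barycentre of a relaxed Markov control is again only measurable in $t$. So ``this keeps the minimiser in $\mathbb{V}$'' is unjustified.
\item Mollifying $\hat v$ in time changes the cost. It yields approximate minimisers, not a minimiser.
\end{itemize}
At best you obtain a minimiser over the larger class of controls that are merely measurable in $t$, with $J(0,\mu_0^X;\hat v)\le\inf_{\mathbb{V}}J$. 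That is not the statement unless you add a separate argument that this minimiser is continuous in $t$. Moreover, for such a $\hat v$, Corollary \ref{MARKOVBSDE} is not directly available, because it rests on the viscosity framework of \cite{PT1999} with continuous coefficients. So even ``the decoupling field associated with $\hat v$'' and $J(0,\mu_0^X;\hat v)$ need their own well-posedness theory.

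\textbf{Second gap: the passage to the limit in Steps 3--4.} Stability of viscosity solutions needs locally uniform convergence of the coefficients. The term $\tfrac{1}{2c_a}v_n\,\partial_x\phi_n$ in \eqref{PDEFMUY} does not converge in that sense when $v_n\stackrel{*}{\rightharpoonup}\hat v$. The Girsanov remark does not fill this. The density $\mathcal{E}_n$ involves $\int_0^T b_n(t,\cdot)^2\,dt$ with $b_n$ containing $v_n$, and squares are not weakly-$*$ continuous. So the mere existence of a density does not let you pass to the limit in the drift.

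What actually makes products with a weakly-$*$ convergent $v_n$ converge is \emph{strong} compactness coming from nondegeneracy of $\sigma^0$:
\begin{itemize}
\item $W^{1,2}_{p,\mathrm{loc}}$ estimates give $\partial_x\phi_n\to\partial_x\phi$ in $L^p_{\mathrm{loc}}$.
\item Fokker--Planck estimates of Aubin--Lions/Aronson type give strong $L^1$ convergence of the densities $p_n(t,\cdot)$ of $\mu^{X,n}_t$.
\end{itemize}
Step 4 needs the same ingredient. The cost is $\int\!\!\int g(t,x,\phi_n,v_n)\,p_n\,dx\,dt$, and weak lower semicontinuity in $v$ applies only once $p_n$ converges strongly.

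\textbf{How to close the gap.} For the lemma as stated, the clean fix is to build a uniform modulus of continuity in $t$ into $\mathbb{V}$. Arzel\`a--Ascoli then gives local uniform compactness, the limit stays in $\mathbb{V}$, viscosity stability applies verbatim, and the cost passes to the limit by dominated convergence. Do not expect to borrow this step from the paper. Its proof asserts that $v_n(\cdot,\mu^{X,n}_\cdot)$ is tight in $C([0,T],\mathbb{R})$ and that the limit has the form $\tilde v(\cdot,\tilde\mu^X_\cdot)$ with $\tilde v\in\mathbb{V}$. Both claims tacitly require exactly the uniform time regularity you correctly identified as missing.
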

\begin{proof}
Let $(v_n)_{n\geq 0} \in \mathbb{V}$ be a minimising sequence such that
\begin{align*}
\lim_{n\to\infty} J(0, \mu_0^X; v_n) = \inf_{v\in\mathbb{V}} J(0, \mu_0^X, v).
\end{align*}
Let $(\mu^{X, n}, \mu^{Y, n}, Z^{0, n})$ be the solution of the following FBSDE associated with control $v_n$,
\begin{align}
d\mu_r^{X, n} &= l\left(r, \mu_r^{X, n}, \mu_r^{Y, n}, v_n(r, \mu_r^{X, n})\right) dr + \sigma^0 dW_t^0, \qquad \mu_0^{X, n} = \mathbb{E}(\xi_0), \label{xn1}\\
d\mu_r^{Y, n} &=  h\left(r, \mu_r^{X, n}, \mu_r^{Y, n}\right) dr + Z_r^{0, n} dW_t^0, \qquad \mu_T^{Y, n} = 0,\label{xn2}
\end{align}
after which, it follows that (i) by \cite[Lemma 3.3]{BGM2011}, there exists a positive constant $C$ such that
\begin{align*}
\sup_{n\geq 0} \mathbb{E} \left[ \sup_{0\leq s\leq T} | \mu_s^{X, n} |^2 +   \sup_{0\leq s\leq T} | \mu_s^{Y, n} |^2 + \int_0^T |Z_s^{0, n}|^2 ds  \right] < C;
\end{align*}
and (ii) by \cite[Lemma 3.4 and Lemma 3.6]{BGM2011} and Corollary \ref{MARKOVBSDE}, the sequence $(\mu^{X, n}, \mu^{Y, n}, \int_0^\cdot Z_s^{0, n}dW_s^0, W^0) $ is tight on the space $C([0, T], \mathbb{R}) \times C([0, T], \mathbb{R}) \times D([0, T], \mathbb{R}) \times C([0, T], \mathbb{R})$ endowed with the topology of uniform convergence on the first, second and fourth factors and $S$-topology on the third factor respectively. Then, by the continuous mapping theorem, we know that (iii) the control sequence $v_n(t, \mu_t^{X, n})$ is tight on the space $C([0, T], \mathbb{R})$ endowed with the topology of uniform convergence.

Statements (i), (ii) and (iii) show that the sequence of processes
\begin{align*}
\gamma_n = (\mu^{X, n}, \mu^{Y, n}, M^n, v_n, W^0)
\end{align*}
is tight on the space $\Gamma = C([0, T], \mathbb{R}) \times C([0, T], \mathbb{R}) \times D([0, T], \mathbb{R}) \times C([0, T], \mathbb{R})\times C([0, T], \mathbb{R})\times C([0, T], \mathbb{R})$ equipped with the product topology of the uniform convergence on the first, second, fourth and fifth factors and the $\mathcal{S}$-topology on the third factor (see \cite[section 2]{BGM2011} for its definition); where $M_t^n:= \int_0^t Z_s^{0, n}dW_s^0$. Then, by \cite[Page 346]{BGM2011}, there exists a probability space $(\tilde{\Omega}, \tilde{\mathcal{F}}^0, \tilde{\mathbb{P}})$, a sequence $\tilde{\gamma}^n = (\tilde{\mu}^{X, n}, \tilde{\mu}^{Y, n}, \tilde{M}^n, \tilde{v}_n, \tilde{W}^{0, n})$ and $\tilde{\gamma} = (\tilde{\mu}^{X}, \tilde{\mu}^{Y}, \tilde{M}, \tilde{v}, \tilde{W}^0)$ defined on this probability space such that:

(iv) for each $n\geq 0$, $\gamma^n$ and $\tilde{\gamma}^n$ are identically distributed; 

(v) there exists a subsequence of $\tilde{\gamma}^n$ that is converges to $\tilde{\gamma}$, $\tilde{\mathbb{P}}$-almost surely on the space $\Gamma$; 

(vi) $v_n$ converges to $\tilde{v}$, $\tilde{\mathbb{P}}-$a.s.;

(vii) $\sup\limits_{0\leq t\leq T} |\tilde{\mu}_t^{X, n} - \tilde{\mu}_t^{X}| \to 0$, $\sup\limits_{0\leq t\leq T} |\tilde{\mu}_t^{Y, n} - \tilde{\mu}_t^{Y}| \to 0$, $\tilde{\mathbb{P}}$-a.s as $n\to\infty$.

By property (iv), we have
\begin{align*}
d\tilde{\mu}_r^{X, n} &= l\left(r, \tilde{\mu}_r^{X, n}, \tilde{\mu}_r^{Y, n}, \tilde{v}_n(r, \tilde{\mu}_r^{X, n})\right) dr + \sigma^0 d\tilde{W}_t^{0, n}, \qquad \tilde{\mu}_0^{X, n} = \mathbb{E}(\xi_0),\\
d\tilde{\mu}_r^{Y, n} &=  h\left(r, \tilde{\mu}_r^{X, n}, \tilde{\mu}_r^{Y, n}\right) dr + \tilde{Z}_r^{0, n} d\tilde{W}_t^{0, n}, \qquad\qquad\qquad\,\, \tilde{\mu}_T^{Y, n} = 0,
\end{align*}
of which, by property (v), (vi) and (vii), let $n\to\infty$ such that
\begin{align*}
d\tilde{\mu}_r^{X} &= l\left(r, \tilde{\mu}_r^{X}, \tilde{\mu}_r^{Y}, \tilde{v}(r, \tilde{\mu}_r^{X})\right) dr + \sigma^0 d\tilde{W}_t^{0}, \qquad \tilde{\mu}_0^{X} = \mathbb{E}(\xi_0),\\
d\tilde{\mu}_r^{Y} &=  h\left(r, \tilde{\mu}_r^{X}, \tilde{\mu}_r^{Y}\right) dr + \tilde{Z}_r^{0} d\tilde{W}_t^{0}, \qquad\qquad\qquad \tilde{\mu}_T^{Y} = 0.
\end{align*}
Now we show that the following limit holds in probability:
\begin{align*}
\lim_{n\to\infty} \int_0^T g\left( r, \tilde{\mu}_r^{X, n},  \tilde{\mu}_r^{Y, n}, \tilde{v}_n(r, \tilde{\mu}_r^{X, n}) \right)dr = \int_0^T g\left(r,  \tilde{\mu}_r^{X}, \tilde{\mu}_r^{Y}, \tilde{v}(r, \tilde{\mu}_r^{X}) \right)dr.
\end{align*}
By Markov's inequality, let $\epsilon>0$,
\begin{align*}
&\tilde{\mathbb{P}}\left(  \bigg{|} \int_0^T  g\left( r, \tilde{\mu}_r^{X, n},  \tilde{\mu}_r^{Y, n}, \tilde{v}_n(r, \tilde{\mu}_r^{X, n}) \right) - g\left(r,  \tilde{\mu}_r^{X}, \tilde{\mu}_r^{Y}, \tilde{v}(r, \tilde{\mu}_r^{X}) \right)  dr   \bigg{|} > \epsilon \right) \\
& \leq \frac{1}{\epsilon} \tilde{\mathbb{E}} \left[  \int_0^T  \bigg{|}  g\left( r, \tilde{\mu}_r^{X, n},  \tilde{\mu}_r^{Y, n}, \tilde{v}_n(r, \tilde{\mu}_r^{X, n}) \right) - g\left(r,  \tilde{\mu}_r^{X}, \tilde{\mu}_r^{Y}, \tilde{v}(r, \tilde{\mu}_r^{X}) \right)   \bigg{|}  dr   \right] \\
&\leq \frac{C}{\epsilon} \tilde{\mathbb{E}}\left[ \int_0^T \left( | \tilde{\mu}_r^{X, n} -  \tilde{\mu}_r^{X}| + | (\tilde{\mu}_r^{X, n})^2 -  (\tilde{\mu}_r^{X})^2| + |\tilde{\mu}_r^{Y, n} \tilde{v}_n(r, \tilde{\mu}_r^{X, n}) - \tilde{\mu}_r^{Y, n} \tilde{v}(r, \tilde{\mu}_r^{X})| \right.\right.\\
&\qquad \qquad \left. \left. + |\tilde{\mu}_r^{Y, n} \tilde{v}(r, \tilde{\mu}_r^{X}) - \tilde{\mu}_r^{Y} \tilde{v}(r, \tilde{\mu}_r^{X})| + | \tilde{v}_n(r, \tilde{\mu}_r^{X, n}) -  \tilde{v}(r, \tilde{\mu}_r^{X}) || \tilde{v}_n(r, \tilde{\mu}_r^{X, n}) +  \tilde{v}(r, \tilde{\mu}_r^{X}) | \right)dr \right]\\
&\leq \frac{C}{\epsilon} \tilde{\mathbb{E}}\left[ \int_0^T \left( | \tilde{\mu}_r^{X, n} -  \tilde{\mu}_r^{X}| + | (\tilde{\mu}_r^{X, n})^2 -  (\tilde{\mu}_r^{X})^2| + |\tilde{\mu}_r^{Y, n} - \tilde{\mu}_r^{Y}|  \right) dr \right]
\end{align*}
where the last inequality is from the boundedness of $\tilde{\mu}^{Y, n}, \tilde{\mu}^{Y}, \tilde{v}_n, \tilde{v}$ and the Lipschitz property of $\tilde{v}_n, \tilde{v}$; after which, by the dominated convergence theorem and property (vii), it follows that
\begin{align*}
\lim_{n\to\infty} \tilde{\mathbb{E}}\left[ \int_0^T \left( | \tilde{\mu}_r^{X, n} -  \tilde{\mu}_r^{X}| + | (\tilde{\mu}_r^{X, n})^2 -  (\tilde{\mu}_r^{X})^2| + |\tilde{\mu}_r^{Y, n} - \tilde{\mu}_r^{Y}|  \right) dr \right] = 0.
\end{align*}
and that
\begin{align*}
 \inf_{v\in\mathbb{V}} J(v) = \lim_{n\to\infty} J(0, \mu_0^{X}; \tilde{v}_n) = J(0, \mu_0^{X}; \tilde{v}) 
\end{align*}
which concludes the proof of existence of an optimal Markovian control.
\end{proof}

Let $(\mu_s^{X, (t,x)})_{s\in[t, T]}$ denote a strong solution to SDE \eqref{FSDEs3c} starting from $x$ at $s = t$. We can now define the value function:
\begin{definition}
The value function of problem \eqref{OPre} is defined as:
\begin{align}
V(t, x) &= \inf_{v\in\mathbb{V}} J(t, x; v),\,\,\, \forall(t, x)\in[0, T)\times\mathbb{R}, \qquad V(T, x) = 0, \,\,\, \forall x\in\mathbb{R}. \label{VF}
\end{align}
\end{definition}

Next, we characterise the value function as follows:

\begin{theorem}
The value function $V$ is a unique viscosity solution of the following Hamilton-Jacobi-Bellman equation:
\begin{align}
&- \frac{\partial V}{\partial t}(t, x) -  \frac{(\sigma^0)^2}{2} \frac{\partial^2 V}{\partial x^2}(t, x)  - \inf_{v\in\mathbb{V}} \bigg\{ l(t, x, \phi(t, x), v)  \frac{\partial V}{\partial x}(t, x) + g(t, x, \phi(t, x), v(t, x)) \bigg\} = 0,  \label{HJB1}
\end{align}
for $(t, x)\in[0, T)\times\mathbb{R}$ and $V(T, x) = 0$ for $x\in\mathbb{R}$.
\end{theorem}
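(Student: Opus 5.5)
The argument has two parts: show that $V$ is a viscosity solution of \eqref{HJB1} via the dynamic programming principle, and prove uniqueness via a comparison principle. The $\mathbb{F}^0$-control problem is treated as a standard one-dimensional Markovian control problem with state $\mu^X$. This works because, by Corollary \ref{MARKOVBSDE}, for every $v\in\mathbb{V}$ the backward component is given by the decoupling field, $\mu_s^Y=\phi(s,\mu_s^X)$. Substituting this into the forward equation \eqref{FSDEs3c} and into the cost \eqref{OPree} yields a controlled SDE with drift $l(s,x,\phi(s,x),v)$ and running cost $g(s,x,\phi(s,x),v)$.

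For these coefficients, $\phi$ is bounded and Lipschitz in $x$ uniformly in $t$, $P$ is Lipschitz, and $v$ takes values in $[-S,S]$ and is $L_c$-Lipschitz. Hence the drift is Lipschitz in $x$ with linear growth. The running cost is continuous, has quadratic growth in $x$, and is locally Lipschitz in $x$ uniformly in $v$. The dependence of $\phi$ on the chosen feedback $v$ is the subtle point; to keep the state equation of the closed loop Markov, I would work, as in \cite{DL2024}, with $\phi$ fixed by the decoupling relation along the controlled trajectory.

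\textbf{Step 1: regularity and DPP.} First establish that $V$ has quadratic growth, $|V(t,x)|\le C(1+x^2)$. Then show that $V$ is locally Lipschitz in $x$ and $1/2$-Hölder in $t$, using the standard estimate $\mathbb{E}\sup_s|\mu_s^{X,(t,x)}-\mu_s^{X,(t,x')}|^2\le C|x-x'|^2$ uniformly over $v\in\mathbb{V}$. With this regularity in hand, invoke the dynamic programming principle \cite[Theorem 3.3.1]{P2009}:
\[
V(t,x)=\inf_{v\in\mathbb{V}}\mathbb{E}\Big[\int_t^{\theta} g\big(s,\mu_s^{X},\phi(s,\mu_s^X),v(s,\mu_s^X)\big)ds+V(\theta,\mu_\theta^{X})\Big]
\]
for stopping times $\theta\in[t,T]$. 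The pasting of Markov feedbacks needed for the DPP is legitimate within $\mathbb{V}$ only after a continuity or approximation argument. The main obstacle is that concatenating two controls in $\mathbb{V}$ at a time $\theta$ breaks continuity in $t$. I would handle this by smoothing in time on a small interval and using the continuity of $J$ in $v$ established in the proof of Lemma \ref{EOOPC}.

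\textbf{Step 2: viscosity property.} Take a test function $\varphi\in C^{1,2}$ touching $V$ from above at $(t_0,x_0)$. To obtain the subsolution inequality, choose a constant feedback $v\equiv a\in[-S,S]$, which belongs to $\mathbb{V}$, apply the DPP on $[t_0,t_0+h]$ together with Itô's formula, divide by $h$, and let $h\downarrow0$. This gives
\[
-\partial_t\varphi-\tfrac{(\sigma^0)^2}{2}\partial_{xx}\varphi-\big(l\,\partial_x\varphi+g\big)(t_0,x_0,\phi,a)\le0 ,
\]
and taking the infimum over $a$ yields the subsolution property. Note that the infimum over $\mathbb{V}$ in \eqref{HJB1} reduces pointwise to the infimum over $a\in[-S,S]$. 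For the supersolution property, argue by contradiction in the usual way: if the inequality were strict, then by continuity it would hold in a neighbourhood, uniformly over $a\in[-S,S]$ since $[-S,S]$ is compact. Choosing $\varepsilon$-optimal controls in the DPP and using Itô's formula up to the exit time of that neighbourhood would then contradict the DPP.

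\textbf{Step 3: uniqueness.} Prove a comparison principle for \eqref{HJB1} in the class of continuous functions with quadratic growth. The Hamiltonian is
\[
H(t,x,p)=\inf_{a\in[-S,S]}\{l(t,x,\phi,a)p+g(t,x,\phi,a)\}.
\]
It satisfies $|H(t,x,p)-H(t,y,p)|\le C(1+|p|)|x-y|+C(1+|x|+|y|)|x-y|$ because $\phi$ and $P$ are Lipschitz, and $|H(t,x,p)-H(t,x,q)|\le C(1+|x|)|p-q|$. With constant diffusion, apply the standard doubling-of-variables argument with penalisation $\frac{|x-y|^2}{2\varepsilon}+\beta e^{Kt}(1+x^2+y^2)$ to control the quadratic growth, as in \cite[Theorem 4.4.5]{P2009}. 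This yields $u\le w$ for any sub- and supersolution, and hence uniqueness. I expect Step 1, specifically the DPP over the restricted Markov class $\mathbb{V}$, to be the most delicate part; the comparison in Step 3 is routine.
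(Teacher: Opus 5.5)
Your outline is the same as the paper's proof: a DPP, then Pham's standard viscosity argument, then a comparison principle (the paper just cites all three). Steps 2 and 3 are indeed routine once a DPP over $\mathbb{V}$ with a single fixed $\phi$ is available. The gap is Step 1, and neither your remedies nor the paper's one-line appeal to a standard DPP closes it.

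First, in \eqref{VF} each $v\in\mathbb{V}$ carries its own decoupling field $\phi^v$ from Corollary~\ref{MARKOVBSDE}. Because \eqref{PDEFMUY} is solved backward, $\phi^v$ on $[t,\theta]$ depends on $v$ on $[\theta,T]$. In the flow identity $J(t,x;v)=\mathbb{E}\big[\int_t^\theta g(s,\mu^X_s,\phi^v(s,\mu^X_s),v(s,\mu^X_s))\,ds+J(\theta,\mu^X_\theta;v)\big]$, the continuation therefore also changes the drift and the cost before $\theta$. So it cannot be optimised separately into $V(\theta,\cdot)$, and the problem is not a standard controlled diffusion in $\mu^X$. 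Freezing $\phi$, as you propose (say $\phi=\phi^{\hat v}$), turns it into one. But then Steps 1--3 characterise the value $\bar V$ of a different, auxiliary problem. The identity $V=\bar V$ is an extra consistency statement of equilibrium type, for example that $\hat v$ is optimal for both problems from every $(t,x)$. Lemma~\ref{EOOPC} does not provide this, since it gives optimality only from $(0,\mu^X_0)$.

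Second, even with $\phi$ frozen, your subsolution argument needs the inequality $V(t,x)\le\mathbb{E}\big[\int_t^{t+h}g\,ds+V(t+h,\mu^X_{t+h})\big]$. Its proof pastes, after time $t+h$, an $\varepsilon$-optimal feedback chosen according to the random point $\mu^X_{t+h}$. The pasted control $s\mapsto v^{\mu^X_{t+h}}(s,\mu^X_s)$ is not of the form $v(s,\mu^X_s)$ with $v\in\mathbb{V}$, and smoothing in time only repairs the discontinuity in $t$, not this.

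The fixed constant $L_c$ makes this more than a technicality. Steps 2 and 3 together would force $\inf_{\mathbb{V}}$ to equal the infimum over all $[-S,S]$-valued progressively measurable controls, because that value is also a quadratic-growth viscosity solution of \eqref{HJB1}. This fails in general. For instance, for $L_c=0$ the class $\mathbb{V}$ contains only deterministic subsidies. Meanwhile the pointwise minimiser, the projection of $(c_i-\phi-\partial_x V)/2$ onto $[-S,S]$, genuinely depends on $x$, since $\partial_x V$ grows linearly because of the penalty $\lambda_d(d-x)^2$, and $\sigma^0\neq0$ makes the state random.

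To close Step 1 you need one of two things. Either give a verification argument showing that this feedback belongs to $\mathbb{V}$, and that it is consistent with the $\phi$ it generates. Or reformulate so that a true DPP holds, for example by taking $(\mu^X,\mu^Y)$ as the state, with $\mu^Y_T=0$ as a stochastic target constraint.
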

\begin{proof}
From \cite[Theorem 3.3, Chapter 3]{YZ2012}, it follows that the value function satisfies the dynamic programming principle:
\begin{align}
V(t, x) &= \inf_{v \in \mathbb{V}} \mathbb{E} \left[ \int_t^\theta g\left(s, \mu_s^{X, (t, x)}, \phi(s, \mu_s^{X, (t, x)}), v(s, \mu_s^{X, (t, x)})\right) ds + V(\theta, \mu_{\theta}^{X, (t, x)}) \right].
\end{align}
Then, starting from the dynamic programming principle, due to the same reasonings of \cite[Pages 64-68]{P2009}, it holds true that the value function is a viscosity solution of the Hamilton-Jacobi-Bellman equation. The uniqueness of the viscosity solution is then established by appealing to the strong comparison theorem in \cite[Lemma 4.4.6, Page 80]{P2009}. 
\end{proof}

\begin{remark}
In order to solve equation \eqref{HJB1}, we need to determine the function $\phi$ that satisfies \eqref{PDEFMUY} corresponding to the optimal controls $\hat{v}$, which is determined by the infimum part of equation \eqref{HJB1}. Although we are dealing with time-consistent control problem, the extended HJB system can still be interpreted as that in \cite[Page 28]{BM2010}: for each point $t$ in time, we have a player $t$ choosing $\hat{v}_t$ to minimise $V$; player $t$ can, however, only affect the dynamic of the process $\mu_t^X$ by choosing the control $\hat{v}_t$ exactly at time $t$, this is solved by equation \eqref{HJB1}; at another time $s \in(t, T]$, the control will be chosen by player $s$, and if both players on the half open interval $(t, T]$ uses the same control $\hat{v}$, that is, equation \eqref{PDEFMUY} is solved under control $\hat{v}$, then it is optimal for player $t$ to use $\hat{v}$.
\end{remark}

We summarise this section with the extended HJB system:
\begin{align}
\begin{cases}
\frac{\partial V}{\partial t}(t, x) + \inf\limits_{v\in\mathbb{V}}\{ \mathcal{L}^{v} V(t, x) + g(t, x,\phi(t, x), v(t, x))\} = 0, & V(T, x) = 0,\\
\frac{\partial \phi}{\partial t}(t, x) + \mathcal{L}^{\hat{v}} \phi(t, x) - h(t, x, \phi(t, x)) = 0, & \phi(T, x) = 0,
\end{cases} \label{EXHJBS}
\end{align}
where $\hat{v}$ is the minimizer.

\subsection{Numerical Approximation of the Extended HJB System} \label{NAOEHJBS1}

From the previous section and \cite{HAW2018}, it follows that the unique viscosity solution to \eqref{EXHJBS} admits the following representation:
\begin{align}
V(t, \mu_t^{X}) &= \int_t^T g\left(s, \mu_s^{X, (t, x)}, \phi(s, \mu_s^{X, (t, x)}), \hat{v}(s, \mu_s^{X, (t, x)})\right) ds  - \int_t^T Z_s^{V} dW_s^0,\label{EXHJBO}\\
\phi(t, \mu_t^X) &= - \int_t^T h\left(s, \mu_s^{X, (t, x)}, \phi(s, \mu_s^{X, (t, x)}), \hat{v}(s, \mu_s^{X, (t, x)})\right) ds  - \int_t^T Z_s^{\phi} dW_s^0, \label{EXHJBT}
\end{align}
and $Z_s^{V} \approx \sigma^0 \frac{\partial V}{\partial x}(s, \mu_s^{X, (t, x)})$ and $\hat{v}(s, \mu_s^{X, (t, x)})$ is given by
\begin{align*}
\argmin_{v\in\mathbb{V}} \bigg\{  \frac{Z_s^V l(s, \mu_s^{X, (t, x)}, \phi(s, \mu_s^{X, (t, x)}), v(s, \mu_s^{X, (t, x)})}{\sigma^0} + g(s, \mu_s^{X, (t, x)}, \phi(s, \mu_s^{X, (t, x)}), v(s, \mu_s^{X, (t, x)})) \bigg\}.
\end{align*}
To apply the first-order condition and verify the existence of $Z^V$, we establish the differentiability of the value function.

\begin{Lemma} \label{VICD}
For $(t, x)\in[0, T]\times\mathbb{R}$, the value function is continuously differentiable with respect to $x$ and its partial derivative admits the following representation:
\begin{align}
\frac{\partial}{\partial x} V(t, x) = \frac{\partial}{\partial x} J(t, x; \hat{v}), \label{DVOVF}
\end{align}
where the derivative is taken with respect to state $x$ (initial data) and $\hat{v}\in\mathbb{V}$ is an optimal control. 
\end{Lemma}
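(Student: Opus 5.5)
We plan to use the classical envelope-type argument for value functions: a one-sided bound obtained by freezing the optimal control, combined with a semiconcavity/semiconvexity estimate. The first step is to fix $t$ and an optimal feedback $\hat v\in\mathbb{V}$, whose existence is given by Lemma \ref{EOOPC}; the same compactness argument applies from any initial point $(t,x)$.

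We then show that $x\mapsto J(t,x;\hat v)$ is continuously differentiable. For a fixed Lipschitz feedback $\hat v$, Corollary \ref{MARKOVBSDE} gives the decoupled representation $\mu^Y_s=\phi(s,\mu^X_s)$, so the forward state solves a Markovian SDE with drift $l(s,x,\phi(s,x),\hat v(s,x))$ and additive noise $\sigma^0W^0$. Because $\sigma^0\neq 0$, the generator is uniformly parabolic, and $J(t,\cdot;\hat v)$ is the Feynman--Kac solution of the linear PDE $\partial_t w+\mathcal{L}^{\hat v}w+g(\cdot,\phi,\hat v)=0$, $w(T,\cdot)=0$. The coefficients are Lipschitz in $x$ and have at most quadratic growth. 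Interior parabolic regularity, for instance Schauder or $W^{1,2}_p$ estimates with the drift treated as a bounded-measurable first-order term on compacts, then gives $w\in C^{0,1}$ with continuous $\partial_x w$.

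An alternative route, which also applies if one only has Lipschitz coefficients, is a Bismut--Elworthy--Li formula. In that case $\partial_x J(t,x;\hat v)=\mathbb{E}\big[\int_t^T g(s,\mu^{X}_s,\dots)\,\frac{1}{(\sigma^0)(s-t)}\int_t^s \nabla_x\mu^X_r\,dW^0_r\,ds\big]$, which is continuous in $x$.

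Next comes the comparison step. By definition $V(t,y)\le J(t,y;\hat v_x)$ for all $y$, where $\hat v_x$ denotes the control that is optimal at the point $x$, and equality holds at $y=x$. Hence $J(t,\cdot;\hat v_x)-V(t,\cdot)$ attains a minimum at $x$, so $J(t,\cdot;\hat v_x)$ is a smooth test function touching $V$ from above. This yields superdifferentiability: $\partial_xJ(t,x;\hat v_x)\in D^{+}_xV(t,x)$.

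To conclude we need the reverse inclusion, and this is where we expect the main obstacle. The natural route is to prove that $V(t,\cdot)$ is semiconvex, since then $D^{+}$ nonempty forces differentiability with $\partial_xV=\partial_xJ(t,x;\hat v_x)$. We would try to obtain semiconvexity by comparing $V(t,x+h)+V(t,x-h)-2V(t,x)$ using controls optimal at $x\pm h$. Two ingredients help here: $g$ is convex quadratic in $x$ ($\lambda_d>0$), and the drift is affine in $(x,y,v)$.

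The difficulty is that $\phi$ depends nonlinearly on the feedback. If this route fails, the fallback is to use the HJB characterization in the theorem preceding this lemma. By uniform parabolicity with $\sigma^0\neq0$, together with a Hamiltonian $\inf_v\{\cdot\}$ that is Lipschitz in $p$ (the control set is bounded) and locally bounded coefficients, the viscosity solution is $C^{1}$ in $x$ by Krylov--Safonov / Caffarelli-type regularity for uniformly parabolic Bellman equations.

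Finally, combining $C^1$ regularity of $V$ with the superdifferential identity gives the single-valued gradient $\partial_xV(t,x)=\partial_xJ(t,x;\hat v)$, and continuity follows from the $C^1$ property.
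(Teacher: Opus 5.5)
Your proof works, but it rests on a different foundation from the paper's.

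\textbf{How the paper argues.} The paper works with a fixed feedback throughout. Its steps (i)--(vii) show that $x\mapsto J(t,x;v)$ is $C^1$ for every $v\in\mathbb{V}$. They use the first-variation process and a Bismut--Elworthy--Li type duality formula, extended from smooth to bounded continuous to bounded Borel integrands by a monotone class argument, and then to the quadratically growing running cost by truncation. This is your Step 2 alternative. The paper then writes $V(t,x)=J(t,x;\hat v)$ and transfers differentiability to $V$.

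\textbf{What your Step 3 adds.} Your Step 3 shows what that identity actually yields. Lemma \ref{EOOPC} only gives a control optimal at one initial point, so $V(t,\cdot)\le J(t,\cdot;\hat v_x)$ with equality only at $x$. This gives $\partial_xJ(t,x;\hat v_x)\in D^+_xV(t,x)$ and nothing more, unless a single feedback is optimal from all nearby initial states, which the paper never proves. Your extra regularity input is therefore what closes the argument.

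\textbf{Trade-off.} The paper's route is self-contained and probabilistic, and it gives an explicit derivative formula, which is what the numerics use for $Z^V$. Its final step, however, is not justified. Your route is rigorous at that step and gives the representation with $\hat v$ optimal only at $(t,x)$. The price is importing the preceding HJB theorem wholesale (existence, uniqueness, and the identification of the field $\phi$ in \eqref{HJB1}) together with parabolic regularity theory.

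\textbf{Drop the semiconvexity attempt in Step 4.} In a minimisation problem, freezing a control only gives upper bounds on $V$, so at best semiconcavity, and that combined with $D^+_xV\neq\emptyset$ tells you nothing. A lower bound on $V(t,x+h)+V(t,x-h)-2V(t,x)$ would need a midpoint-convexity construction. That fails here for three reasons: averaged feedbacks do not produce averaged trajectories, the decoupling field $\phi$ depends nonlinearly on $v$ through $P$, and $g$ contains the bilinear term $\phi v/(2c_a)$.

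\textbf{The fallback is the real proof, and it can be made cleaner.} Krylov--Safonov alone gives only H\"older continuity. But the infimum in \eqref{HJB1} is pointwise over $[-S,S]$, so the equation is semilinear: $-\partial_tV-\frac{(\sigma^0)^2}{2}\partial_{xx}V=H(t,x,\partial_xV)$, with $H$ continuous and Lipschitz in $p$ locally uniformly in $x$. Either $W^{1,2}_p$ theory for $L^p$-viscosity solutions, or building a strong solution with quadratic growth and invoking uniqueness, gives $V\in W^{1,2}_{p,\mathrm{loc}}$ for all $p<\infty$. Hence $\partial_xV$ is continuous, and your Step 3 identifies it with $\partial_xJ(t,x;\hat v_x)$.
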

\begin{proof}
We start with the case for $(t, x)\in[0, T]\times K$, where $K$ is any compact subset of $\mathbb{R}$.

(i) The process $(\mu_s^{X, (t, x)})_{s\in[t, T]}$ is the unique solution to the following SDE:
\begin{align*}
d\mu_s^{X, (t, x)} &= b(s, \mu_s^{X, (t, x)}) ds + \sigma^0 dW_s^0, \qquad \mu_t^X = x,
\end{align*}
where $b(s, y) = l(s, y, \phi(s, y), v(s, y))$ is Lipschitz continuous. Then, by \cite[Theorem 6.3 in page 42]{YZ2012}, for $p\geq1$, a generic constant $C_T>0$ and $x\in K$,
\begin{align}
\mathbb{E}\left[ \sup_{s\in[t, T]} |\mu_s^{X, (t, x)}|^p \right] \leq C_T (1+ |x|^p). \label{VICD1}
\end{align}
Since $x\in K \subset \mathbb{R}$, the right-hand side of \eqref{VICD1} is uniformly bounded by a constant $C_{T, K, p}$ depending on $T, K$ and $p$.

(ii) From \cite[Proposition 2.4]{BD2018}, as $b(t, x)$ fulfils the Lipschitz and linear growth conditions, $\mu_s^{X, (t, x)}$ is Malliavin differentiable w.r.t its initial condition for all $s\in[t, T]$, and for all $t\leq r \leq s\leq T$ we have
\begin{align}
D_r \mu_s^{X, (t, x)} = \exp\bigg\{ \int_r^s b'(u, \mu_u^{X, (t, x)}) du \bigg\}, \label{VICD11}
\end{align}
and 
\begin{align}
\frac{\partial}{\partial x} \mu_s^{X, (t, x)} = \exp\bigg\{ \int_t^s b'(u, \mu_u^{X, (t, x)}) du \bigg\}. \label{VICD12}
\end{align}
As a consequence, $\frac{\partial}{\partial x} \mu_s^{X, (t, x)} = D_r \mu_s^{X, (t, x)} \frac{\partial}{\partial x} \mu_r^{X, (t, x)}$, where $b'(s, y)$ denotes the (weak) derivative of $b$ with respect to $y$; and all equalities hold $\mathbb{P}$-a.s. Furthermore, by \cite[Corollary A.7]{BD2018}, for any compact subset $K\subset \mathbb{R}$ and $p\geq 1$,
\begin{align}
\sup_{x\in K}\sup_{s\in[t, T]} \mathbb{E}\left[ \left( \frac{\partial}{\partial x} \mu_s^{X, (t, x)}  \right)^p \right] \leq C_{K, p}, \label{VICD13}
\end{align}
for a constant $C_{K. p} >0$ depending on $K$ and $p$.

(iii) From step (i), equation \eqref{VICD1} and the definition of the running cost function, for $(t, x)\in[0, T]\times K$, it follows that
\begin{align}
\mathbb{E}\left[ \sup_{s\in[t, T]} |g\left(s, \mu_s^{X, (t, x)}, \phi(s, \mu_s^{X, (t, x)}), v(s, \mu_s^{X, (t, x)})\right)|^p \right] \leq C_\Phi (1+|x|^{2p}) \leq C_{\Phi, T, K, p}, \label{VICD2}
\end{align}
where $C_{\Phi, T, K, p}>0$ is a constant depending on $T, K$ and $p$ only; then, by Fubini's theorem, the cost functional can be rewritten as
\begin{align}
J(t, x; v) &= \mathbb{E} \left[ \int_t^T g\left(s, \mu_s^{X, (t, x)}, \phi(s, \mu_s^{X, (t, x)}), v(s, \mu_s^{X, (t, x)})\right) ds \right],\nonumber\\
&=  \int_t^T  \mathbb{E} [ \underbrace{ g\left(s, \mu_s^{X, (t, x)}, \phi(s, \mu_s^{X, (t, x)}), v(s, \mu_s^{X, (t, x)})\right)}_{\Phi\left(s, \mu_s^{X, (t, x)}\right)}] ds,
\end{align}
and note that
\begin{align}
\mathbb{E}\left[ \Phi\left(s, \mu_s^{X, (t, x)}\right) \right] \leq C_{\Phi, T, K, p}. \label{VICD22}
\end{align}
Now, we proceed as that in the proof of \cite[Theorem 3.8]{BD20181} and \cite[Lemma 4.1]{BD2018}:

(iv) Let us define the function $\bar{\gamma}$ and the process $\beta$,
\begin{align*}
\bar{\gamma}(t, x):= \mathbb{E}\left[ \Phi\left(s, \mu_s^{X, (t, x)}\right) \int_t^s \beta_r dW_r \right],\qquad \beta_r:= \frac{a(r)}{\sigma^0} \frac{\partial}{\partial x} \mu_s^{X, (t, x)},
\end{align*}
where $a:[t, T]\mapsto \mathbb{R}$ is a bounded integrable function satisfying $\int_t^s a(r) dr = 1$ and $a(r) = 0$ for $r\in[s, T]$.
Then, by the Cauchy-Schwartz inequality and It\^{o} isometry, it holds that for a constant $C_{\gamma, K, T, p} > 0$
\begin{align*}
|\bar{\gamma}(t, x)| &\leq \mathbb{E}\left[ |\Phi\left(s, \mu_s^{X, (t, x)}\right) \int_t^s \beta_r dW_r|\right] \leq C_\gamma \left( \mathbb{E}\left[\int_t^s \beta_r^2 dr\right] \mathbb{E}\left[ \Phi^2(s, \mu_s^{X, (t, x)}) \right] \right)^{\frac{1}{2}} < C_{\gamma, K, T, p},
\end{align*}
where the last inequality follows from \eqref{VICD2} with $C_{\gamma, K, T, p}$ depending on $K, T, p$.

(v) From \cite[Page 16]{BD2018}, we observe that for any infinitely differentiable function $\psi:[t, T]\times\mathbb{R}\mapsto\mathbb{R}$, the function $\Psi(t, x):=\mathbb{E}\left[\psi(s, \mu_s^{X, (t, x)})\right]$ is continuously differentiable on $x$ with derivative
\begin{align*}
\frac{\partial}{\partial x}\Psi(t, x) =  \mathbb{E}\left[ \psi\left(s, \mu_s^{X, (t, x)}\right) \int_t^s \beta_r dW_r \right],
\end{align*}
which is the duality formula for the Malliavin derivative, see \cite[Page 16]{BD2018}.
Then, let function $\bar{\psi}$ be an arbitrary bounded and continuous, in particular, $\bar{\psi}(s, \mu_s^{X, (t, x)})\in\mathcal{L}_{\mathbb{F}^0}^2([t, T]; \mathbb{R})$. We can approximate $\bar{\psi}$ by a sequence of smooth functions $\{\psi_n\}_{n\geq0}$ with compact support such that $\psi_n(s, y)\to\bar{\psi}(s, y)$ as $n\to\infty$. By the exact same argument in \cite[Step 2 of Proof of Theorem 3.8]{BD20181}, it follows that $\bar{\Psi}(t, x):=\mathbb{E}\left[\bar{\psi}(s, \mu_s^{X, (t, x)})\right]$ is continuously differentiable on $x$ with derivative
\begin{align*}
\frac{\partial}{\partial x}\bar{\Psi}(t, x) =  \mathbb{E}\left[ \bar{\psi}\left(s, \mu_s^{X, (t, x)}\right) \int_t^s \beta_r dW_r \right].
\end{align*}
From \cite[Step 3 of Proof of Theorem 3.8]{BD20181}, the above result can be extended to all bounded Borel measurable functions $\tilde{\psi}$. To do so, let us denote $\mathcal{G}:=\{ \bar{\psi}:[t, T]\times\mathbb{R}\mapsto\mathbb{R}continuous and bounded \}$ such that $\mathcal{G}$ is a multiplicative class, i.e. $\bar{\psi}_1, \bar{\psi}_2\in\mathcal{G}$, then $\bar{\psi}_1\bar{\psi}_2\in\mathcal{G}$; in addition, let $\mathcal{H}$ be the class of function $\tilde{\psi}:[t, T]\times\mathbb{R}\mapsto\mathbb{R}$ such that $\tilde{\Psi}(t, x):=\mathbb{E}\left[\tilde{\psi}(s, \mu_s^{X, (t, x)})\right]$ is continuously differentiable and fulfils
\begin{align*}
\frac{\partial}{\partial x}\tilde{\Psi}(t, x) =  \mathbb{E}\left[ \tilde{\psi}\left(s, \mu_s^{X, (t, x)}\right) \int_t^s \beta_r dW_r \right].
\end{align*}
As a result, $\mathcal{G}\subseteq\mathcal{H}$. Then, $\mathcal{H}$ is a monotone vector space on $\mathbb{R}$ (see \cite[Page 7]{PPE2012} for its definition), implying that if $\{\tilde{\psi}_n\}_{n\geq 1}\subset \mathcal{H}$ and $0\leq\tilde{\psi}_1\leq \tilde{\psi}_2\leq\dots\leq \tilde{\psi}_n\leq \dots$ and $\lim\limits_{n\to\infty}\tilde{\psi}_n = \hat{\psi}$ and $\hat{\psi}$ is bounded, then $\hat{\psi}\in\mathcal{H}$. And by the same argument in \cite[Step 3]{BD20181} and Monotone Class Theorem in \cite[Page 7]{PPE2012}, we can conclude that $\mathcal{H}$ contain all bounded Borel measurable function on $\mathbb{R}$.

(vi) Next, as in \cite[Proof of Theorem 3.8]{BD20181}, we can approximate $\Phi$ by a sequence of bounded Borel measurable functions $\{\Phi_n\}_{n\geq0}(s, y) = \Phi(s, y) I\{ |y|\leq n \}$ such that $\Phi_n(s, y)\to\Phi(s, y)$ as $n\to\infty$ for $(s, y)\in[t, T]\times\mathbb{R}$. Let 
\begin{align*}
\gamma_n(t, x)&:= \mathbb{E}\left[ \Phi_n(s, \mu_s^{X, (t, x)}) \right],\qquad \tilde{\gamma}(t, x):=\mathbb{E}\left[ \Phi(s, \mu_s^{X, (t, x)}) \right],
\end{align*}
such that by \eqref{VICD22} and dominated convergence theorem, $\lim\limits_{n\to\infty} \sup\limits_{x\in K } |\gamma_n(t, x) - \tilde{\gamma}(t, x)| = 0$.

(vii) An appeal to step (v) yields
\begin{align*}
\frac{\partial}{\partial x} \gamma_n(t, x) = \mathbb{E}\left[ \Phi_n\left(s, \mu_s^{X, (t, x)}\right) \int_t^s \beta_r dW_r  \right],
\end{align*}
after which, we observe that
\begin{align*}
\bigg{|} \frac{\partial}{\partial x} \gamma_n(t, x) - \bar{\gamma}(t, x)  \bigg{|} & = \bigg{|}  \mathbb{E}\left[ \left(  \Phi_n\left(s, \mu_s^{X, (t, x)}\right) -  \Phi\left(s, \mu_s^{X, (t, x)}\right) \right) \int_t^s \beta_r d W_r \right]\bigg{|}\\
&\leq  \mathbb{E}\left[ \bigg{|}  \Phi_n\left(s, \mu_s^{X, (t, x)}\right) -  \Phi\left(s, \mu_s^{X, (t, x)}\right)  \bigg{|}  \bigg{|}   \int_t^s  \beta_r  dW_r  \bigg{|}  \right]\\
&\leq \left( \mathbb{E}\left[ \bigg{|}  \Phi_n\left(s, \mu_s^{X, (t, x)}\right) -  \Phi\left(s, \mu_s^{X, (t, x)}\right) \bigg{|}^2  \right] \mathbb{E}\left[ \int_t^s \beta_r^2 dr \right]\right)^{\frac{1}{2}}\\
&\leq C_{K, P}  \mathbb{E}\left[ \bigg{|}  \Phi_n\left(s, \mu_s^{X, (t, x)}\right) -  \Phi\left(s, \mu_s^{X, (t, x)}\right) \bigg{|}^2  \right]^{\frac{1}{2}},
\end{align*}
where we have used the Cauchy-Schwarz inequality, inequality \eqref{VICD13}; then, by  \eqref{VICD22} and dominated convergence theorem, it is clear that
\begin{align*}
\lim_{n\to\infty} \sup_{x\in K}\mathbb{E}\left[ \bigg{|}  \Phi_n\left(s, \mu_s^{X, (t, x)}\right) -  \Phi\left(s, \mu_s^{X, (t, x)}\right) \bigg{|}^2  \right]^{\frac{1}{2}} = 0,
\end{align*}  
which entails that $\lim\limits_{n\to\infty} \sup\limits_{x\in K }\bigg{|} \frac{\partial}{\partial x} \gamma_n(t, x) - \bar{\gamma}(t, x)  \bigg{|} = 0$.

(vii) By combining steps (vi) and (vii), the differentiable limit theorem implies that the mapping $x\mapsto \tilde{\gamma}(t, x)$ is continuously differentiable for $(t, x)\in[0, T]\times K$ and
\begin{align*}
\frac{\partial}{\partial x} \tilde{\gamma}(t, x) = \bar{\gamma}(t, x),
\end{align*}
which is bounded by a constant $C_{\gamma, K, T, p}$ according to step (iv). From step (iii) and the fact that constant $C_{\gamma, K, T, p}$ is independent of $s$, it holds that the mapping $x\mapsto J(t, x; v)$ is continuously differentiable and for optimal control $\hat{v}\in\mathbb{V}$, we know $V(t, x) = J(t, x; \hat{v})$ and thereby establishing the continuous differentiability of the value function for $(t, x)\in[0, T]\times K$.

Finally, from the arbitrariness of compact set $K$, the conclusion follows.
\end{proof}

We can rewrite the system \eqref{EXHJBO} and \eqref{EXHJBT} in the forward manner with an optimisation problem aiming to match the terminal condition, that is,
\begin{align}
V(s, \mu_s^{X, (t, x)}) &= V(t, x) - \int_t^s g\left(r, \mu_r^{X, (t, x)}, \phi(r, \mu_r^{X, (t, x)}), \hat{v}(r, \mu_r^{X, (t, x)})\right) dr - \int_t^s Z_r^{V} dW_r^0, \label{NEXHJB3}\\
\phi(s,  \mu_s^{X, (t, x)}) &= \phi(t, x) + \int_t^s h\left(s, \mu_r^{X, (t, x)}, \phi(r, \mu_r^{X, (t, x)}), \hat{v}(r, \mu_r^{X, (t, x)})\right) dr  - \int_t^s Z_r^{\phi} dW_r^0. \label{NEXHJB5}
\end{align}
together with the loss function
\begin{align}
\mathbb{E} \left[ V^2(T, \mu_T^{X, (t, x)})\right],\\
\mathbb{E} \left[ \phi^2(T, \mu_T^{X, (t, x)}) \right].
\end{align}

To derive the numerical algorithm to compute $V$, $\phi$, we treat $V(t, x)$, $\phi(t, x)$ and process $Z^V$, $Z^\phi$ as neural networks $V_{\theta^1}(t, x), \phi_{\theta^2}(t, x)$, $Z_{\theta^3}(r, \mu_r^{X, (t, x)})$,  $z_{\theta^4}(r, \mu_r^{X, (t, x)})$ with parameters $\hat{\pmb{\theta}} = (\theta^1, \theta^2, \theta^3, \theta^4)$, see \cite{HJ2016}, \cite{HAW2018} and \cite{DL2024}. We apply temporal discretisation to the Forward system \eqref{NEXHJB3} - \eqref{NEXHJB5}. Given a partition of the time interval $[t, T]: t = t_0 < t_1 < \dots < t_N = T$, we consider the Euler-Maruyama scheme as before for $n = 0,\dots, N-1$:
\begin{align*}
{\mu}_{t_{n+1}}^X - {\mu}_{t_{n}}^X  &=  l\left( t_n, {\mu}_{t_n}^X,  \phi(t_n,  {\mu}_{t_{n}}^X), \hat{v} \right) \Delta t + \sigma^0 \Delta W_n^0,
\\
 \phi(t_{n+1},  {\mu}_{t_{n+1}}^X) - \phi(t_n,  {\mu}_{t_{n}}^X)   &=   h\left({t_{n}}, {\mu}_{t_{n}}^X,  \phi(t_n,  {\mu}_{t_{n}}^X) \right) \Delta t + z_{\theta^4}({t_n}, {\mu}_{t_n}^X)  \Delta W_n^0,\\
 V(t_{n+1},  {\mu}_{t_{n+1}}^X) - V(t_n,  {\mu}_{t_{n}}^X)   &=  - g\left({t_{n}}, {\mu}_{t_{n}}^X,  \phi(t_n,  {\mu}_{t_{n}}^X), \hat{v} \right) \Delta t + z_{\theta^3}({t_n}, {\mu}_{t_n}^X)  \Delta W_n^0,
\end{align*}
where $\Delta t = \frac{T}{N}$ and $\Delta W_n = W_{t_{n+1}}^0 - W_{t_n}^0$, $\mu_{t}^X = x$,  $\phi(t, x) = \phi_{\theta^2}(t, x)$ and $V(t, x) = V_{\theta^1}(t, x)$. The minimise $\hat{v}$ is computed at each $t_n$ as:
\begin{align}
\hat{v} = (-S) \vee \frac{c_i - z_{\theta_3}(t_n, \mu_{t_n}^X) - \phi(t_n, \mu_{t_n}^X)}{2} \wedge S,
\end{align}
by the first order condition and Lemma \ref{DVOVF}.

\begin{breakablealgorithm}
\caption{Algorithm Solving Extended HJB system \eqref{EXHJBS}}
\begin{algorithmic}
\State Let $V_{\theta^1}(\cdot, \cdot), \phi_{\theta^2}(\cdot, \cdot), Z_{\theta^3}(\cdot, \cdot)$ and $z_{\theta^4}(\cdot, \cdot)$ be the neural networks with parameters $(\theta^1, \theta^2, \theta^3, \theta^4)$ defined on $[0, T]\times\mathbb{R}$ so that $(\theta^1, \theta^2, \theta^3)$ is initialised with value $\hat{\pmb{\theta}}_0 = (\theta_0^1, \theta_0^2, \theta_0^3, \theta_0^4)$. Let $K$ be the iterations, $B$ be the Batch size and $(\pmb{\rho}_k)_{k = 0, \dots, K-1}$ be the learning rate.
\For{$k$ from $0$ to $K$}
\State Choose a constant for $\mathbb{E}(\xi_0)$, that is, choose the initial condition for all the samples
\State Set $\forall j\in B$,  $V(t, x) = V_{\theta_0^1}(t, x), \phi(t, x) = \phi_{\theta_0^2}(t, x)$, $\mu_{t_0}^X=x$, $L_V, L_\phi = 0$
\For{$i$ from $0$ to $N-1$}
\For{$j$ from $1$ to $B$}
\State $t_i = i \Delta t$
\State Sample $\Delta W_{i}^0$ from normal distribution with mean $0$ and variance $\Delta t$
\State $D_{t_{i+1}} = D_{t_i} + a(b(t_i) - D_{t_i})\Delta t$
\State ${\mu}_{t_{i+1}}^{X, j} = {\mu}_{t_{i}}^{X, j}  +   l\left( s, {\mu}_{t_i}^{X, j}, {\mu}_{t_i}^{Y, j} , \hat{v}({t_i}, {\mu}_{t_i}^{X, j}) \right)  \Delta t + \sigma^0 \Delta W_i^0$
\State $\phi_{\theta^2}(t_{i+1}, {\mu}_{t_{i+1}}^{X, j})  = \phi_{\theta^2}(t_{i}, {\mu}_{t_{i}}^{X, j}) +  h\left( s, {\mu}_{t_i}^{X, j}, {\mu}_{t_{i}}^{Y, j}\right)  \Delta t +  z_{\theta^4}({t_{i}}, \hat{\mu}_{t_{i}}^{X, j}) \Delta W_i^0$
\State $V_{\theta^1}(t_{i+1}, {\mu}_{t_{i+1}}^{X, j}) = V_{\theta^1}((t_{i}, {\mu}_{t_{i}}^{X, j})) - g(t_i, \mu_{t_i}^{X, j}, \phi_{\theta^2}(t_{i}, {\mu}_{t_{i}}^{X, j}), \hat{v}(t_i, \mu_{t_i}^{X, j})) \Delta t + Z_{\theta^3}({t_{i}}, \hat{\mu}_{t_{i}}^{X, j}) \Delta W_i^0$
\EndFor
\State $L_V^j = (V_{\theta^1}(t_N, {\mu}_{t_{N}}^{X, j}))^2$, $L_\phi^j = (\phi_{\theta^2}(t_N, {\mu}_{t_{N}}^{X, j}))^2$
\EndFor
\State $J_1(\theta^1, \theta^3) = \frac{1}{B} \sum_{j=1}^B  L_V^j $, $J_2(\theta^2, \theta^4) = \frac{1}{B} \sum_{j=1}^B  L_\phi^j$
\State Compute the gradient of $J_1$ with respect to $(\theta^1, \theta^3)$ and $J_2$ with respect to $(\theta^2, \theta^4)$ by back-propagation. Update $\hat{\pmb{\theta}}_{k+1} =  \hat{\pmb{\theta}}_{k} - \pmb{\rho}_k ( \triangledown J_1(\theta^1, \theta^3), \triangledown J_2(\theta^2, \theta^4))$.
\EndFor
\end{algorithmic}
\end{breakablealgorithm}

In the following examples, let $D_t = 1500 MWh, \lambda_d = 5$, $S=500\$\slash MWh$.

\begin{exam}[Excess demand at $t = 0$.] \label{ED01}
In this example, we inherit the parameters from Example \ref{SPTExam}. From Figures \ref{Exam04}, \ref{Exam002} and \ref{Exam14}, we know that the market would have resolved the excess demand in the long run; but in a short term, it is not sufficient enough to deal with the supply shortage, see Figure \ref{Exam22r} and therefore, government subsidy is essential. Figure \ref{Exam22r} shows that, when the government subsidise the installation of generation capacity such that the installation cost is fully covered, more new capacity is installed.  The expected profit of producing energy is decreasing overtime, that is, the map $s\mapsto\phi(s, \mu_s^{X,(0, 1000)})$ is decreasing with $\mu_s^{X,(0, 1000)}\leq 1500$ for all $s\in[0, T]$. Then, by comparison theorem, we see that from SDE \eqref{FSDEs3c}, the generation capacity $\mu_s^{X, (0, 1000)}$ increases only when $v(s, \mu_s^{X, (0, 1000)})$ increases for $s\in[0, T]$.
\begin{figure}[H]
\centering
\subfloat[$\mu^X$]{\includegraphics[width=0.25\textwidth]{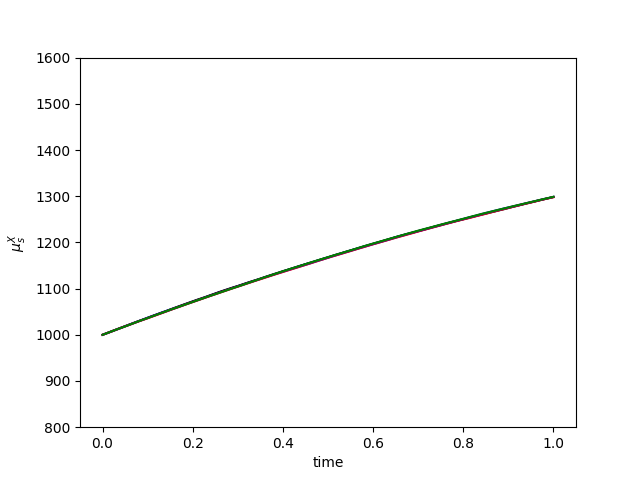}}
\subfloat[$\mu^Y$]{\includegraphics[width=0.25\textwidth]{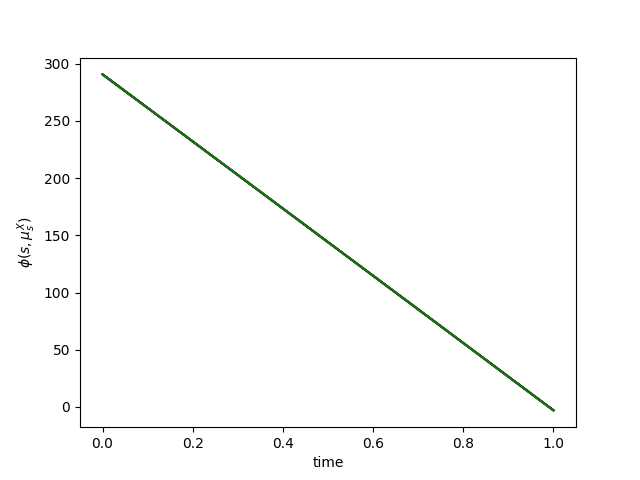}}
\subfloat[producer's control $\hat{\alpha}$]{\includegraphics[width=0.25\textwidth]{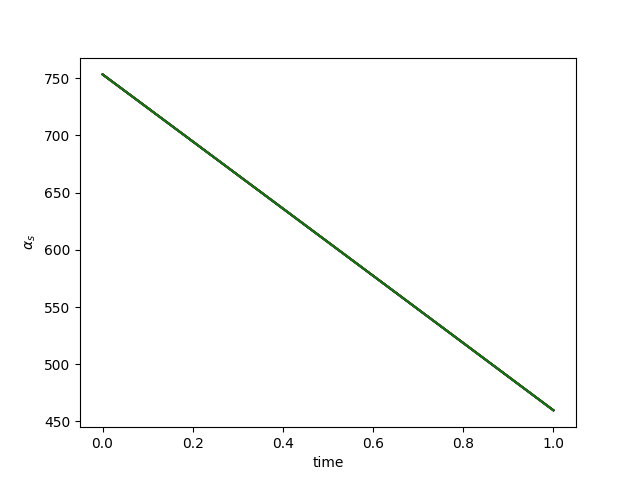}}
\subfloat[installation subsidy $\hat{v}$]{\includegraphics[width=0.25\textwidth]{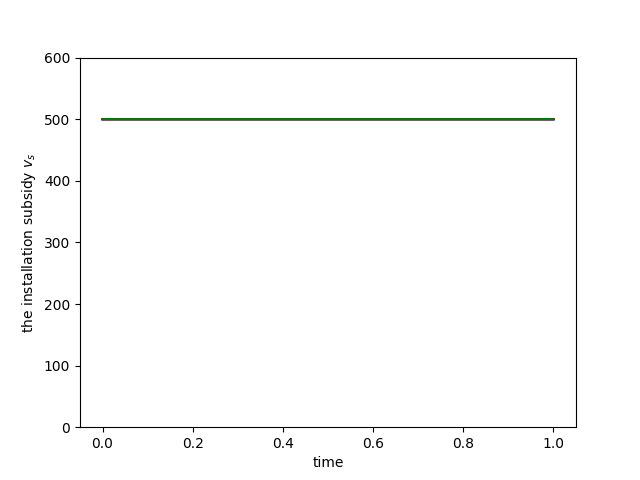}}
\caption{$\sigma^0 = 1$, $T=1$, $r=1$.}
\label{Exam22r}
\end{figure}
\end{exam}

\begin{exam}[Excess supply at $t = 0$] \label{ES01}
In this example, we inherit the parameters from Example \ref{SPTExam}. Figures \ref{Exam01}, \ref{Exam03} and \ref{Exam32} tell us that the market fails to adjust the problem of excess supply and to address this problem, government intervention is necessary. In Figure \ref{Exam32}, it is clear that given the low costs, the producer will increase the generation capacity at first and after excess supply drives the price to go further down, the producer will finally decrease the supply; therefore, as in Figure \ref{Exam34r} (d), at the beginning of the taxation program. more tax should be imposed to discourage development of generation capacity. As the excess-supply condition subsides, the social planner may subsequently reduce the tax rate.
\begin{figure}[H]
\centering
\subfloat[$\mu^X$]{\includegraphics[width=0.25\textwidth]{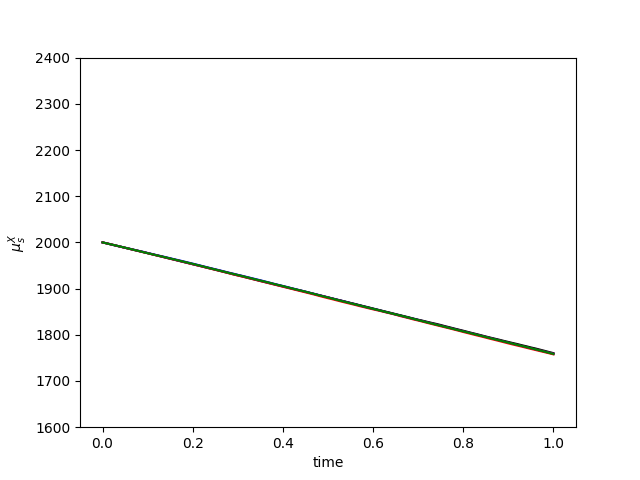}}
\subfloat[$\mu^Y$]{\includegraphics[width=0.25\textwidth]{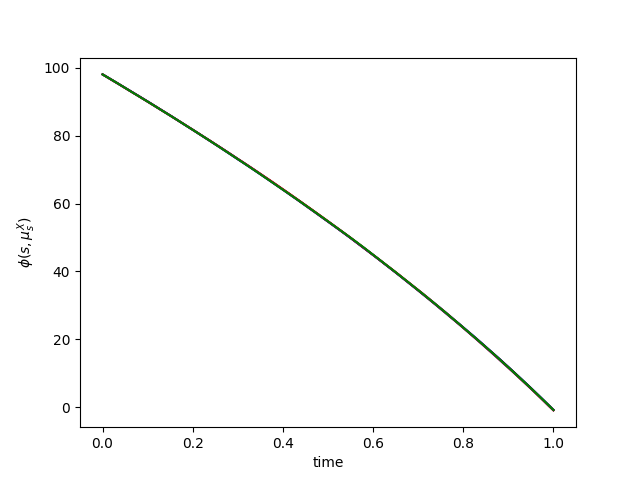}}
\subfloat[producer's control $\hat{\alpha}$]{\includegraphics[width=0.25\textwidth]{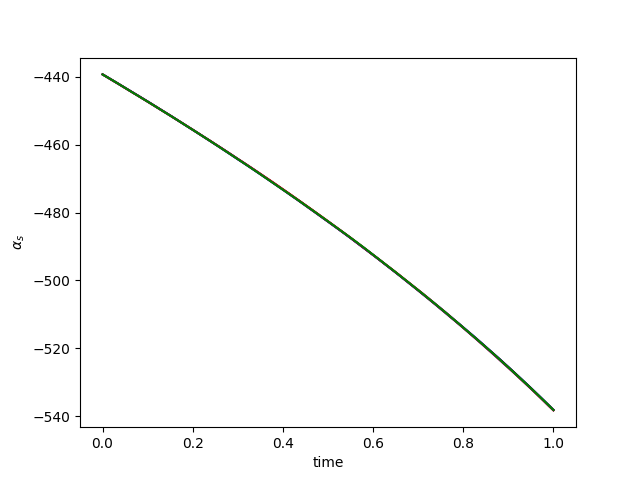}}
\subfloat[installation subsidy $\hat{v}$]{\includegraphics[width=0.25\textwidth]{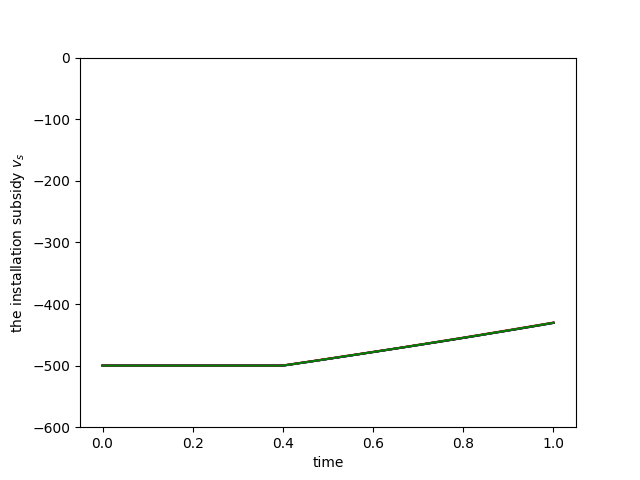}}
\caption{$\sigma^0 = 1$, $T=1$, $r=1$.}
\label{Exam34r}
\end{figure}
\end{exam}

\begin{exam}[Excess demand at $t=0$] \label{ED02}
In this example, we inherit the parameters from Example \ref{E2C}. In Figures \ref{Exam12}, \ref{Exam128} and \ref{Exam001}, despite more capacity is installed, the energy generation capacity still falls short comparing to the demand. With a sufficiently high installation subsidy, it can effectively encourage the new installation of generation capacity. 
\begin{figure}[H]
\centering
\subfloat[$\mu^X$]{\includegraphics[width=0.25\textwidth]{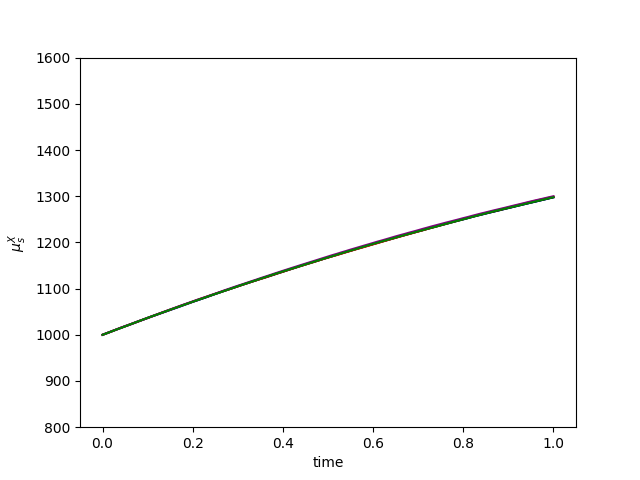}}
\subfloat[$\mu^Y$]{\includegraphics[width=0.25\textwidth]{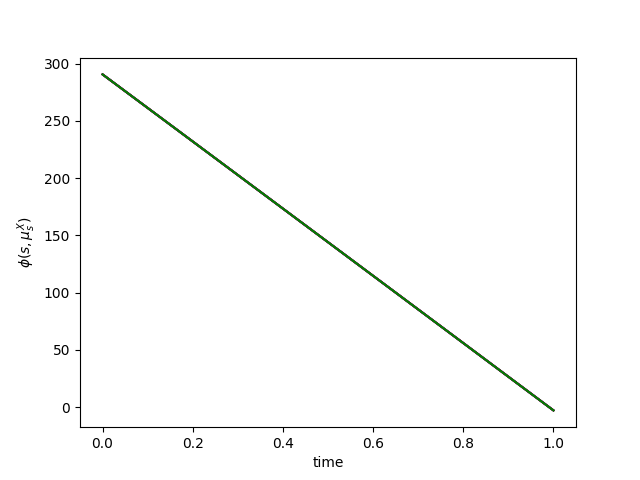}}
\subfloat[producer's control $\hat{\alpha}$]{\includegraphics[width=0.25\textwidth]{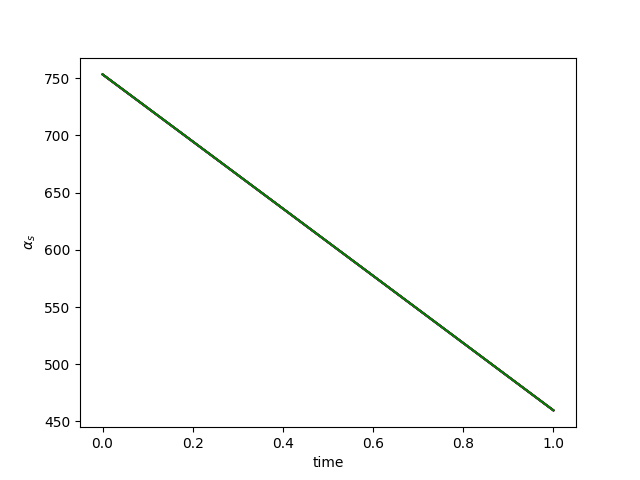}}
\subfloat[installation subsidy $\hat{v}$]{\includegraphics[width=0.25\textwidth]{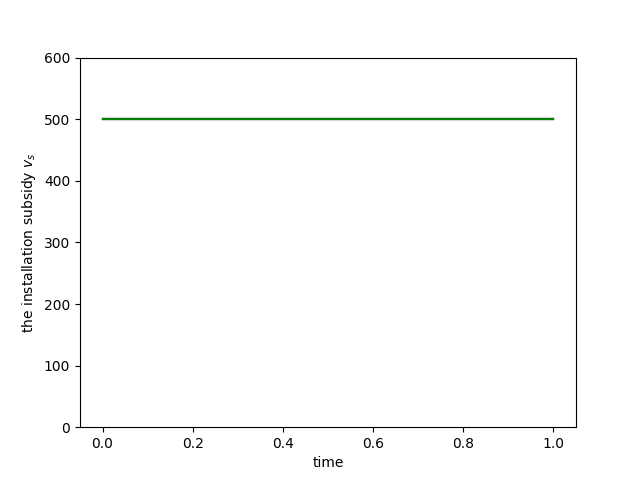}}
\caption{$\sigma^0 = 1$, $T=1$, $r=1$.}
\label{Exam21r}
\end{figure}
\end{exam}

\begin{exam}[Excess supply at $t = 0$.] \label{ES02}
In this example, we inherit the parameters from Example \ref{E2C}. Under the market mechanism proposed in Example \ref{E2C}, from Figures \ref{Exam11}, \ref{Exam13} and \ref{Exam1202}, it is clear that the representative producer continues to install more capacities even when the demand had long been satisfied. And in Figure \ref{Exam13}, the installation rate $\alpha$ is decreasing but remain positive, this is why in Figure \ref{Exam32r} higher level of tax is imposed at the beginning of the program and once the issue of excess supply diminishes, the social planner can correspondingly ease the tax burden.
\begin{figure}[H]
\centering
\subfloat[$\mu^X$]{\includegraphics[width=0.25\textwidth]{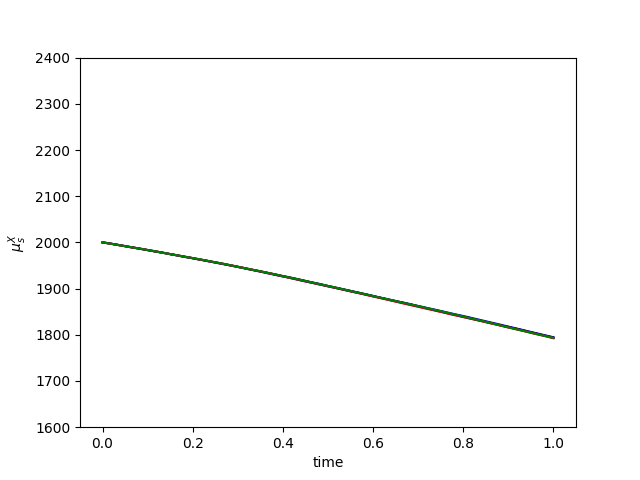}}
\subfloat[$\mu^Y$]{\includegraphics[width=0.25\textwidth]{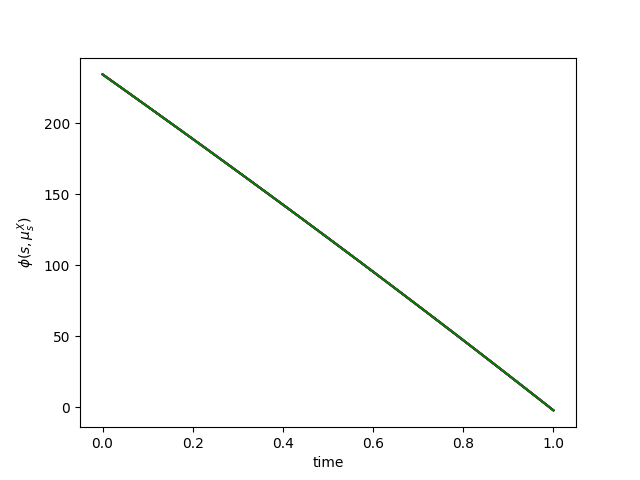}}
\subfloat[producer's control $\hat{\alpha}$]{\includegraphics[width=0.25\textwidth]{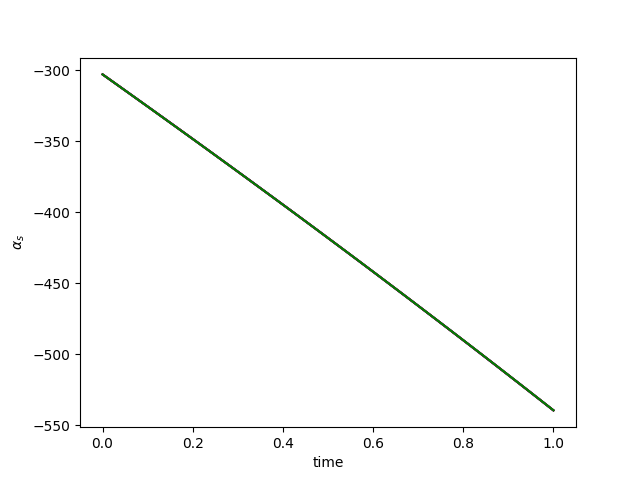}}
\subfloat[installation subsidy $\hat{v}$]{\includegraphics[width=0.25\textwidth]{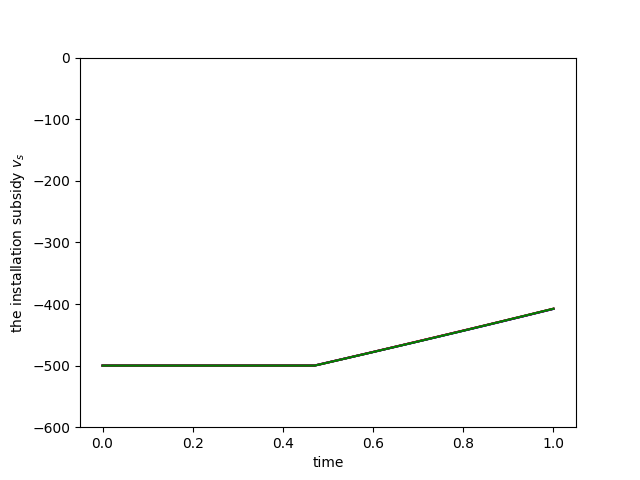}}
\caption{$\sigma^0 = 1$, $T=1$, $r=1$.}
\label{Exam32r}
\end{figure}
\end{exam}

\bmhead{Acknowledgements}
The authors acknowledge financial support under the National Recovery and Resilience Plan (NRRP), funded by the European Union, NextGenerationEU, Project Title: Probabilistic Methods for Energy Transition (n. P20224TM7Z). The authors also thank Anthony R\'{e}veillac for valuable discussions.

\bibliography{sn-bibliography} 

\end{document}